\newtheorem{theorem}{Theorem}[section]
\newtheorem{lemma}[theorem]{Lemma}
\newtheorem{proposition}[theorem]{Proposition}
\newtheorem{corollary}[theorem]{Corollary}
\newtheorem{definition}[theorem]{Definition}
\theoremstyle{remark}
\newtheorem{example}[theorem]{\bf Example}
\theoremstyle{plain}
\newtheorem{remark}[theorem]{Remark}
\newcommand{\x}{\boldsymbol{x}}
\newcommand{\PP}{\mathbb{P}}
\newcommand{\g}{\mathcal{G}}
\title[Shannon's theorem for locally compact groups]{Shannon's theorem for locally compact groups} 
\author{Behrang Forghani}
\address{Bowdoin College, 8600 College Station, Brunswick, ME 04011, USA}
\email{bforghan@bowdoin.edu}
\author{Giulio Tiozzo}
\address{University of Toronto, 40 St George St, Toronto,  ON M5S 2E4, Canada}
\email{tiozzo@math.utoronto.ca}
\begin{document}

\subjclass[2010]{60J50, 60G50, 22D05.}

\maketitle

\begin{abstract}
We establish the ray and strip approximation criteria for the identification of the Poisson boundary of random walks on locally compact
groups. This settles a conjecture from the 1990's by Kaimanovich, who formulated and proved the criterion for discrete groups. 
The key result is the proof of a version of the Shannon-McMillan-Breiman theorem for locally compact groups. 
We provide several applications to locally compact groups of isometries of nonpositively curved spaces, as well as Diestel-Leader graphs and horocylic products. 
\end{abstract}

\section{Introduction}

In the theory of random walks on groups, one of the most natural invariants is the \emph{Poisson boundary}. 
Let $G$ be a locally compact, second countable group and $\mu$ a Borel probability measure on $G$; then the Poisson boundary of the pair $(G, \mu)$ is a measure $G$-space $(\partial G, \nu)$ that represents all possible asymptotic behaviours of a sample path for a random walk driven by $\mu$. 
More precisely, the Poisson boundary  can be defined as the space of ergodic components of the shift map on the path space,
and it provides a generalization of the classical \emph{Poisson representation formula} for bounded harmonic functions on the disk:
namely, the \emph{Poisson transform} 
$$f \mapsto \varphi(g) := \int_B f \ dg \nu$$
is an isomorphism 
between $L^\infty(\partial G, \nu)$ and the space $H^\infty(G, \mu)$ of bounded $\mu$-harmonic functions on the group. 

A central question in the theory of random walks on groups has been the identification of the Poisson boundary, which is originally an abstract measure space, with a more concrete boundary arising from the geometry.

In this light, Furstenberg \cite{Furstenberg-semi} proved that for an absolutely continuous measure on a connected, semisimple Lie group, its Poisson 
boundary can be realized as a quotient $G/P$, where $P$ is a minimal parabolic subgroup. For these groups, he also classified all possible measurable 
quotients of the Poisson boundary (which are now known as $\mu$-boundaries). 

This can also be applied to lattices in semisimple Lie groups. In a more general setting, Avez \cite{avez}, Derriennic \cite{Derriennic}, and Kaimanovich-Vershik \cite{Kaimanovich-Vershik} formulated criteria for the triviality of the boundary in terms of the \emph{entropy} of the random walk. 

For discrete groups, Kaimanovich (\cite{Kaimanovich-maximal85}, \cite{Ka}) established geometric criteria for the identification of their Poisson boundaries with a 
geometric boundary. In particular, he proved the celebrated \emph{ray approximation} criterion, which roughly states that if geodesics track 
the random walk within sublinear distance, then the boundary of the space coincides with the Poisson boundary, as well as the more flexible \emph{strip approximation} criterion. 
These criteria have been applied widely to a large class of discrete groups. 

It is a long-standing conjecture (see e.g.  \cite{Kaimanovich-Zactions}, \cite{Kaimanovich-Woess}) that analogous criteria hold in 
the continuous case, i.e. when $G$ is a locally compact group and $\mu$ is absolutely continuous, or more generally spread-out, with respect to the Haar measure.
The goal of this paper is to establish the ray criterion and the strip criterion for arbitrary locally compact groups. 

In order to do so, let us consider a probability measure $\mu$ on $G$, and let us define a random walk on $G$ as 
$$x_n := g_1 g_2 \dots g_n$$
where $g_i$ are i.i.d. elements of $G$ with distribution $\mu$. The distribution of $x_n$ is given by the $n^{th}$-step convolution $\mu^{*n}$; 
let us denote as $\rho_n$ the density of $\mu^{*n}$ with respect to the Haar measure. The first key ingredient is the following.

\subsection{Shannon-McMillan-Breiman theorem}

In ergodic theory, the classical 
Shannon-McMillan-Breiman (SMB) theorem asserts 
for an ergodic, stationary process, 
the exponential convergence of the measure of cylinder sets at a rate equal to the entropy. 

In the context of random walks on discrete groups, a related statement has been proven by Derriennic \cite{Derriennic} and Kaimanovich-Vershik \cite{Kaimanovich-Vershik}. 
Namely, they show that  
\begin{equation} \label{E:SMB}
-\frac{1}{n} \log \mu^{*n}(x_n) \to h(\nu)
\end{equation}
almost surely and in $L^1$, if the asymptotic (Avez) entropy $h(\nu)$ is finite. (Note that it does not follow from the classical one, as the sequence $(x_n)$ is not stationary.) 

Derriennic (\cite{De}, page 278) asked about a generalization of \eqref{E:SMB}
to locally compact groups.
We prove the following version of the SMB theorem for measures with bounded density on locally compact groups. 

\begin{theorem}[Weak Shannon-McMillan-Breiman theorem] \label{thm-intro : bounded density}
Let $G$ be a locally compact second countable group and $\mu$ a Borel probability measure on $G$ absolutely continuous w.r.t. the left Haar measure, with density $\rho$. 
Let $(\partial G,\nu)$ be the Poisson boundary of the random walk $(G,\mu)$.
If $\rho$ is bounded and for any $n$ the differential entropy $H_n$ is finite, then
almost surely
$$
\liminf_n \left( -\frac{1}{n} \log \rho_n(x_n) \right) = h(\nu)
$$
where $h(\nu)$ is the Furstenberg entropy of the Poisson boundary.
\end{theorem}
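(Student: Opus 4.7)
The plan is to prove the theorem through a Bayes-type decomposition of $-\log \rho_n(x_n)$ along the Poisson boundary, followed by an application of the Birkhoff ergodic theorem to the resulting Radon--Nikodym cocycle. Since the conditional law of $\xi_\infty$ given $x_n = g$ is the harmonic measure $g\nu$, the joint distribution of $(x_n, \xi_\infty)$ on $G \times \partial G$ has density $\rho_n(g) \frac{dg\nu}{d\nu}(\xi)$ with respect to $dg \otimes d\nu$. Writing $\rho_n^\xi(g) := \rho_n(g) \frac{dg\nu}{d\nu}(\xi)$ for the conditional density of $x_n$ given $\xi_\infty = \xi$ (a probability density in $g$), one obtains the key identity
$$-\log \rho_n(x_n) = \log \frac{dx_n\nu}{d\nu}(\xi_\infty) - \log \rho_n^{\xi_\infty}(x_n).$$

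For the first term on the right, the cocycle property writes $\log \frac{dx_n\nu}{d\nu}(\xi_\infty)$ as a Birkhoff sum of integrable increments of the form $\log \frac{dg_k \eta_k}{d\eta_k}$ over the natural shift on the augmented path space $G^{\mathbb N} \times \partial G$. Integrability of these increments is guaranteed by the finite differential entropy hypothesis; Birkhoff's ergodic theorem then yields the almost-sure convergence $\frac{1}{n}\log \frac{dx_n\nu}{d\nu}(\xi_\infty) \to h(\nu)$. The proof therefore reduces to establishing
$$\liminf_n \frac{1}{n}\bigl(-\log \rho_n^{\xi_\infty}(x_n)\bigr) = 0 \quad \text{almost surely}.$$
The upper bound $\liminf \leq 0$ can be extracted from a Fatou-type argument together with a subadditivity property of the differential entropy $H_n$ (valid under boundedness of $\rho$), yielding the Avez--Furstenberg identity $H_n/n \to h(\nu)$. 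The lower bound $\liminf \geq 0$ is the most delicate point: using $\rho_n \leq \|\rho\|_\infty$ and a Markov--Borel--Cantelli estimate on the event $\{\rho_n^{\xi_\infty}(x_n) \geq e^{\epsilon n}\}$, together with the sharp control of the cocycle $\frac{dx_n\nu}{d\nu}$ from the previous step, one shows that a positive density of indices $n$ satisfies $\rho_n^{\xi_\infty}(x_n) \leq e^{\epsilon n}$, forcing the liminf to be nonnegative.

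The principal obstacle is the failure of pointwise subadditivity. In the discrete case one has $\mu^{*(n+m)}(gh) \geq \mu^{*n}(g)\,\mu^{*m}(h)$, which permits a direct application of Kingman's subadditive ergodic theorem to $-\log \mu^{*n}(x_n)$ and delivers a full almost-sure limit. For densities this inequality fails, since convolution is an integral rather than a sum of pointwise products. This failure is precisely what forces the weaker \emph{liminf} conclusion (rather than a full limit) and necessitates the indirect, boundary-based approach sketched above. The boundedness of $\rho$ is essential throughout: it ensures that all convolutions $\rho_n$ remain uniformly bounded, that the conditional densities $\rho_n^\xi$ are tractable, and that the relevant entropy quantities behave comparably to their discrete counterparts.
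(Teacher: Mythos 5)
Your architecture is the same as the paper's (decompose $-\log\rho_n(x_n)$ via the boundary cocycle, apply Birkhoff to get $\frac{1}{n}\log\frac{dx_n\nu}{d\nu}(\xi_\infty)\to h(\nu)$, use Fatou plus Derriennic's $H_n/n\to h(\nu)$ for one inequality, and Borel--Cantelli for the other), but the crucial Borel--Cantelli step is applied to the wrong quantity and, as stated, fails. To get $\liminf_n\bigl(-\frac{1}{n}\log\rho_n^{\xi_\infty}(x_n)\bigr)\ge 0$ you need $\rho_n^{\xi_\infty}(x_n)\le e^{\epsilon n}$ for \emph{all} sufficiently large $n$ (your "positive density of indices" would only control the $\limsup$), and the Markov--Borel--Cantelli route requires a uniform bound on $\mathbb{E}\bigl[\rho_n^{\xi_\infty}(x_n)\bigr]$. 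But since the joint law of $(x_n,\xi_\infty)$ has density $\rho_n(g)\frac{dg\nu}{d\nu}(\xi)$, this expectation equals $\int_{G\times\partial G}\rho_n(g)^2\bigl(\frac{dg\nu}{d\nu}(\xi)\bigr)^2\,dm(g)\,d\nu(\xi)$, a second moment of the Radon--Nikodym cocycle. Boundedness of $\rho$ only reduces one factor of $\rho_n$, leaving $\int\bigl(\frac{dg\nu}{d\nu}\bigr)^2 d\nu$, which is $\ge 1$ by Jensen and in general grows (or is infinite); there is no reason for it to be bounded uniformly in $n$ along the walk. The paper's key observation is to desynchronize the two factors: one bounds instead $\mathbb{E}\bigl[\frac{dx_{n-1}\nu}{d\nu}(\xi_\infty)\,\rho_n(x_n)\bigr]$. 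Writing out the joint density of $(x_{n-1},x_n,\xi_\infty)$ as $\rho_{n-1}(h)\rho(h^{-1}g)\frac{dg\nu}{d\nu}(\xi)$ and bounding the single-increment density $\rho(h^{-1}g)\le\|\rho\|_\infty$ decouples $h$ from $g$, and each remaining integral equals $1$ by stationarity of $\nu$; the whole expectation is then $\le\|\rho\|_\infty$. Since $\frac{1}{n}\log\frac{dx_{n-1}\nu}{d\nu}(\xi_\infty)$ has the same a.s.\ limit $h(\nu)$ as the version at time $n$, this index shift costs nothing, and Borel--Cantelli then gives exactly $\liminf_n\bigl(-\frac{1}{n}\log\rho_n(x_n)\bigr)\ge h(\nu)$. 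Without this (or some substitute integrability input), your lower bound does not go through.

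Two smaller points. First, you invoke "a subadditivity property of the differential entropy $H_n$ (valid under boundedness of $\rho$)"; the paper's Example \ref{ex : fake} shows this fails already for the uniform density on $(0,1)$, which is bounded. The limit $H_n/n\to h(\nu)$ is instead obtained from Derriennic's monotonicity of the mutual information $I(x_1,x_n)=H_n-H_{n-1}$ (Theorem \ref{theo : Derriennic}), so the conclusion you need is true but for a different reason. Second, for the inequality $\liminf\le h(\nu)$ it is cleaner to apply Fatou directly to $-\frac{1}{n}\log\rho_n(x_n)+\frac{1}{n}\log\|\rho\|_\infty\ge 0$ (using $\rho_n\le\|\rho\|_\infty$ for all $n$), compare with $H_n/n\to h(\nu)$, and conclude a.e.\ equality from the pointwise lower bound; routing this half through the conditional density $\rho_n^{\xi}$ requires an extra argument to justify Fatou, since $-\frac{1}{n}\log\rho_n^{\xi_\infty}(x_n)$ has no uniform lower bound.
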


For the definitions of differential entropy and Furstenberg entropy see Section \ref{S:entropies}. 
Let us remark that for countable groups, the theorem follows relatively easily from the submultiplicativity property
$\mu^{* n+m}(y) \geq \mu^{* n}(x) \mu^{*m}(x^{-1} y)$ which can be thusly interpreted: the random walk can reach $y$ by first going to $x$ and then going from $x$ to $y$. 

In the continuous case, the above sets have measure zero, and the argument breaks down (one of the signs of this 
is that the \emph{differential entropy} may be negative for continuous distributions). The technique we use here is based on the \emph{Furstenberg entropy}, together with a Borel-Cantelli argument, 
inspired by the proof of the classical SMB theorem by Algoet-Cover \cite{Algoet-Cover}. See Section \ref{S:AEP}.
Note that for countable groups, the Furstenberg entropy of the Poisson boundary coincides with the Avez entropy \cite{Kaimanovich-Vershik}.

\subsection{Geometric criteria for the Poisson boundary}

The previous result allows us to extend to locally compact groups the criteria given by Kaimanovich to identify the Poisson boundary.
We call a metric $d$ on a group $G$ \emph{temperate} if there exists $C > 0$ such that the Haar measure of any ball of radius 
$R$ is at most $C e^{CR}$.

\begin{theorem}[Ray approximation for locally compact groups] \label{ray:intro}
Let $G$ be a locally compact second countable group equipped with a temperate metric $d$, and let $\mu$ be a spread-out probability measure on $G$ with finite first moment with respect to $d$.
Let $(B, \lambda)$ be a $\mu$-boundary, and suppose that there exist maps $\pi_n : B \to G$ such that for almost every sample path $\x = (x_n)$ 
$$\lim_{n \to \infty} \frac{d(x_n, \pi_n(x_\infty^\lambda))}{n} = 0,$$
where $x_\infty^\lambda$ is the boundary point of the sample path $(x_n)$ in $B$.
Then $(B, \lambda)$ is the Poisson boundary of $(G, \mu)$. 
\end{theorem}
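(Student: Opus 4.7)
The strategy is to adapt Kaimanovich's discrete-case proof, with Theorem \ref{thm-intro : bounded density} (the SMB theorem for LCGs) as the essential analytic input. We condition on the terminal boundary value, bound the exponential decay rate of the conditional density at the random walk position using ray approximation and the temperateness of $d$, identify the relevant limits with the Furstenberg entropy via a cocycle computation plus Birkhoff's theorem, and then conclude via the entropy-maximization characterization of the Poisson boundary among $\mu$-boundaries.

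Concretely, for $\lambda$-a.e.\ $b\in B$, let $\rho_n^b$ denote the density, with respect to Haar, of the law of $x_n$ conditioned on $x_\infty^\lambda=b$. The Doob $h$-transform with harmonic function $h_b(g):=\frac{dg\lambda}{d\lambda}(b)$ gives
$$\rho_n^b(g)=\rho_n(g)\,h_b(g), \qquad -\log\rho_n^b(x_n)=-\log\rho_n(x_n)-\log\frac{dx_n\lambda}{d\lambda}(x_\infty^\lambda).$$
The first step is to show $\limsup_n\bigl(-\tfrac1n\log\rho_n^b(x_n)\bigr)\le 0$ almost surely. Fix $\varepsilon>0$, set $A_n:=\{g:d(g,\pi_n(b))\le\varepsilon n\}$, and observe that temperateness yields $|A_n|_{\mathrm{Haar}}\le C e^{C\varepsilon n}$. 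Markov's inequality applied to $\rho_n^b$ on $A_n$ gives
$$P\bigl(\rho_n^b(x_n)<e^{-2C\varepsilon n},\ x_n\in A_n\,\big|\,x_\infty^\lambda=b\bigr)\;\le\; C e^{-C\varepsilon n},$$
and Borel-Cantelli combined with the ray-approximation hypothesis $x_n\in A_n$ eventually a.s.\ forces $\limsup\le 2C\varepsilon$, hence $\le 0$ upon letting $\varepsilon\to 0$.

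Next, the cocycle identity $\frac{d(xg)\lambda}{dx\lambda}(b)=\frac{dg\lambda}{d\lambda}(x^{-1}b)$ shows that $\log\frac{dx_n\lambda}{d\lambda}(x_\infty^\lambda)$ is the Birkhoff sum of the one-step observable $\psi(\x):=\log\frac{dg_1\lambda}{d\lambda}(x_\infty^\lambda)$ over the ergodic shift on path space. Provided $\psi\in L^1$, which should follow from the finite first moment and temperateness hypotheses via a routine cocycle estimate, Birkhoff's theorem gives $\tfrac1n\log\frac{dx_n\lambda}{d\lambda}(x_\infty^\lambda)\to E[\psi]=h(B,\lambda)$, the Furstenberg entropy. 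Combining with Theorem \ref{thm-intro : bounded density},
$$0\;\ge\; \liminf_n\bigl(-\tfrac1n\log\rho_n^b(x_n)\bigr)\;=\;h(\nu)-h(B,\lambda),$$
so $h(B,\lambda)\ge h(\nu)$. Since any $\mu$-boundary satisfies the reverse inequality $h(B,\lambda)\le h(\nu)$, equality holds, and the LCG entropy criterion, which itself should follow from Theorem \ref{thm-intro : bounded density}, identifies $(B,\lambda)$ with the Poisson boundary.

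\textbf{The main obstacle} is the pointwise control of $\rho_n^b$ at the random point $x_n$: a density can concentrate on a small subset where it is enormous and be tiny elsewhere, so a volume bound on $A_n$ does not translate directly into a pointwise lower bound on $\rho_n^b(x_n)$. The Markov plus Borel-Cantelli maneuver above, in the spirit of the Algoet-Cover proof of SMB, sidesteps this by producing a probabilistic estimate, which suffices because $x_n$ is itself drawn from $\rho_n^b$. A secondary technical issue is the $L^1$-integrability of $\psi$ needed for Birkhoff, and the passage from the absolutely continuous case of Theorem \ref{thm-intro : bounded density} to the spread-out hypothesis of the theorem, which should be handled by passing to a sufficiently high convolution power of $\mu$.
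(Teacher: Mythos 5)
Your argument for the bounded-density case is essentially the paper's. The Markov-plus-Borel--Cantelli estimate on the sets $A_n$ of subexponential Haar measure is exactly the content of the paper's entropy criterion (Theorem \ref{thm : general An sets}, proved in the Appendix, where your set $\{g\in A_n : \rho_n^b(g)<e^{-2C\varepsilon n}\}$ appears as $B_n^\gamma$); the Birkhoff/cocycle computation for $\frac{1}{n}\log\frac{dx_n\lambda}{d\lambda}(x_\infty^\lambda)$ is Theorem \ref{theo : Furstenberg-entropy}; and the combination with the weak SMB theorem and Derriennic's maximality (Theorem \ref{thm : De ma}) is the same. Two small remarks: the integrability of your $\psi$ is obtained in the paper from $h(\lambda)\le h(\nu)<\infty$ rather than from a moment estimate, and the finiteness of $H_n$ (needed for the SMB theorem) is where the temperate gauge and the finite first moment actually enter, via Corollary \ref{cor : Hn finite}.

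The genuine gap is the reduction from spread-out measures to measures with bounded density. Passing to a convolution power does not work as stated: $\mu^{*k}$ being non-singular with respect to Haar measure does not make it absolutely continuous, let alone of bounded density, and truncating its absolutely continuous part changes the space of harmonic functions. The paper's solution (Section \ref{sec : Stopping time trick}) is a stopping-time construction: write $\mu^{*k}=\alpha+\beta$ with $\alpha$ the restriction of the absolutely continuous part to a set where its density is at most $C$, and replace $\mu$ by $\theta=\sum_{i\ge 0}\beta^{*i}*\alpha$. The identity $P^{\theta}f=f\Leftrightarrow P^{\mu^{*k}}f=f$ shows $\theta$ has the same bounded harmonic functions, hence the same Poisson boundary; $\theta$ has density bounded by $C$; and finite first moment is preserved because the stopping time has finite expectation (Lemma \ref{thm : finite moment tau}). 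One must then also transfer the trapping sets along the random subsequence $x_{\tau_n}$, which is Lemma \ref{L:entro-stop} and uses $\tau_n/n\to E(\tau)$ together with Egorov's theorem; this is why the paper strengthens the $\limsup$ hypothesis to a $\liminf$ in the version that feeds into the stopping-time argument, and why finite first moment (rather than finite logarithmic moment, which suffices in the discrete case) is required. Your sketch flags this step, but the proposed fix would fail; the stopping-time trick is the missing idea.
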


The theorem extends (\cite{Ka}, Theorem 5.5) to locally compact groups. Recall that in most geometric situations the map $\pi_n$ is constructed by taking a geodesic ray $\gamma$ connecting the base point to the boundary point $b$, and defining $\pi_n(b)$ as a closest group element from $\gamma(An)$ to $G$
(see \cite{Kaimanovich-hyp94}, \cite{Tiozzo-tracking}), where $A$ is the rate of escape of the random walk.

Note by comparison that in Bader-Shalom \cite{Bader-Shalom}, which generalizes the original approach of Furstenberg, 
it is required that the group generated by the support of $\mu$ be ``large" (precisely, that the action be strongly transitive). 
Our approach, instead, has the benefit of being inherited by subgroups; once the ray criterion holds for a group, it holds for any closed subgroup.
For instance, we obtain the Poisson boundary for any closed subgroup of $SL_n(\mathbb{R})$ (see Theorem \ref{T:Lie}).

Observe that in Theorem \ref{ray:intro} the hypotheses on the measure $\mu$ are more general than in Theorem \ref{thm-intro : bounded density}.
Indeed, we first establish the ray approximation for measures of bounded density (Theorem \ref{thm : ray1}), and then, using a strategy 
mentioned in \cite{Kaimanovich-Zactions}, we use a \emph{stopping time} to extend the criterion to the more general case of spread-out measures with finite first moment (see Section \ref{sec : Stopping time trick}). 

A slightly more flexible type of approximation criterion is given by the \emph{strip approximation}, where one needs to approximate the bilateral 
random walk with bilateral geodesics. Here, we establish it for locally compact groups. Recall that the \emph{reflected measure} $\check{\mu}$ on $G$ 
is defined as $\check{\mu} := \iota_* \mu$, where $\iota(g) = g^{-1}$. 
We denote as $\mathcal{P}(G)$ the set of subsets of $G$.
We prove the following generalization of (\cite{Ka}, Theorem 6.4) to locally compact groups:

\begin{theorem}[Strip approximation for locally compact groups] \label{T:intro-strip}
Let $G$ be a locally compact second countable group, and let $\mu$ be a spread-out probability measure on $G$. 
Let $(B_-,\lambda_-)$ be a $\check{\mu}$--boundary and $(B_+,\lambda_+)$ be a $\mu$--boundary. If there exists a sub-additive, temperate gauge $\g$ such that $\mu$ has finite first moment and a measurable $G$--equivariant map $S : B_- \times B_+ \to \mathcal{P}(G)$ such that  
$\lambda_-\otimes \lambda_+\{(b_-,b_+) : e \in S(b_-,b_+) \}=p>0$
and  for $\lambda_-\otimes\lambda_+$-almost every $(b_-,b_+)$ in $B_-\times B_+$
$$
\frac{1}{n} \log^+ m\left( S(b_-,b_+) \cap \g_{|x_n|} \right) \to 0
$$
in $\PP$--probability, then $(B_+,\lambda_+)$ is the Poisson boundary.
\end{theorem}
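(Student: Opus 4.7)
The strategy is to establish the identity $h_F(B_+, \lambda_+) = h(\nu)$ between the Furstenberg entropy of $(B_+, \lambda_+)$ and the asymptotic entropy, which guarantees that $(B_+, \lambda_+)$ is the Poisson boundary. First, using the stopping time construction of Section \ref{sec : Stopping time trick}, we replace $\mu$ by a measure with bounded density (preserving the $\mu$-boundary in question), so that the weak Shannon--McMillan--Breiman theorem (Theorem \ref{thm-intro : bounded density}) becomes available.

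Writing the joint density of $(x_n, x_\infty)$ with respect to $m \otimes \lambda_+$ as $\rho_n(g) \cdot \frac{dg\lambda_+}{d\lambda_+}(b)$, the conditional density of $x_n$ given $x_\infty = b$ is $\rho_n^{b}(g) = \rho_n(g) \cdot \frac{dg\lambda_+}{d\lambda_+}(b)$, and we obtain the decomposition
$$ -\log \rho_n^{x_\infty}(x_n) \;=\; -\log \rho_n(x_n) \;-\; \log \frac{dx_n\lambda_+}{d\lambda_+}(x_\infty). $$
Theorem \ref{thm-intro : bounded density} gives $\liminf_n -\frac{1}{n}\log\rho_n(x_n) = h(\nu)$ almost surely, while Birkhoff's ergodic theorem applied to the additive cocycle $\log \frac{dg\lambda_+}{d\lambda_+}(b)$ under the stationary joint law yields $\frac{1}{n}\log \frac{dx_n\lambda_+}{d\lambda_+}(x_\infty) \to h_F(B_+, \lambda_+)$ almost surely. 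Combining these, $\liminf_n -\frac{1}{n}\log\rho_n^{x_\infty}(x_n) = \Delta$ almost surely, with $\Delta := h(\nu) - h_F(B_+, \lambda_+) \geq 0$; the task reduces to proving $\Delta = 0$.

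To exploit the strip, pass to the bilateral walk. The shift-invariance of the bilateral law, together with the $G$-equivariance of $S$, promotes the hypothesis $\lambda_- \otimes \lambda_+(\{e \in S\}) = p$ into $\PP(x_n \in S(b_-, b_+)) = p$ for every $n$, because the $n$-fold shift maps $\{e \in S(b_-, b_+)\}$ onto $\{x_n \in S(b_-, b_+)\}$. Since $b_-$ is a function of the past increments and hence independent of the pair $(x_n, b_+)$, the conditional density of $x_n$ given $(b_-, b_+)$ coincides with $\rho_n^{b_+}$, and a reverse Markov inequality applied to $q(b_-, b_+) := \PP(x_n \in S(b_-, b_+) \mid b_-, b_+) \in [0,1]$ produces a set $E_n \subset B_- \times B_+$ of $\lambda_-\otimes\lambda_+$-measure at least $p/2$ on which $q \geq p/2$. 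For $(b_-, b_+) \in E_n$, set $T_n := S(b_-, b_+) \cap \g_{|x_n|}$; since $x_n \in \g_{|x_n|}$ automatically, $\PP(x_n \in T_n \mid b_-, b_+) \geq p/2$, while the strip hypothesis forces $m(T_n) \leq e^{\epsilon n}$ with conditional probability tending to $1$. The subset $\{g \in T_n : \rho_n^{b_+}(g) \leq e^{-2\epsilon n}\}$ carries $\rho_n^{b_+}$-mass at most $e^{-2\epsilon n}\,m(T_n) \leq e^{-\epsilon n}$, so for large $n$ we get $\PP(\rho_n^{x_\infty}(x_n) \geq e^{-2\epsilon n} \mid b_-, b_+) \geq p/3$ on $E_n$. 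Integrating over $E_n$ provides the uniform lower bound $\PP(-\frac{1}{n}\log\rho_n^{x_\infty}(x_n) \leq 2\epsilon) \geq c > 0$ for all sufficiently large $n$.

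Setting $X_n := -\frac{1}{n}\log\rho_n^{x_\infty}(x_n)$, reverse Fatou yields $\PP(X_n \leq 2\epsilon \text{ infinitely often}) \geq c > 0$; but $\liminf_n X_n = \Delta$ almost surely (a constant), so if $\Delta > 2\epsilon$ then $X_n > 2\epsilon$ eventually for almost every path, a contradiction. Hence $\Delta \leq 2\epsilon$, and since $\epsilon > 0$ was arbitrary, $\Delta = 0$, so $h_F(B_+, \lambda_+) = h(\nu)$ and $(B_+, \lambda_+)$ is the Poisson boundary. The principal obstacle lies in the passage to the continuous setting: on discrete groups the strip gives a direct cardinality bound on the support of the conditional distribution of $x_n$, whereas here only a Haar-volume bound on the strip is available; the argument must therefore interpolate between the almost-sure SMB behaviour and the merely probabilistic strip control, which is precisely what the reverse-Fatou step above accomplishes.
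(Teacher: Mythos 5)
Your overall architecture is the same as the paper's: reduce to a bounded-density measure $\theta$ by the stopping-time trick, use the weak Shannon--McMillan--Breiman theorem together with the almost-sure convergence $\frac{1}{n}\log\frac{dx_n\lambda_+}{d\lambda_+}(x_\infty)\to h(\lambda_+)$ to express $h(\nu)-h(\lambda_+)$ as an almost-sure $\liminf$, use the bilateral shift-invariance and equivariance of $S$ to get $\PP(x_n\in S(b_-,b_+))=p$ for all $n$, and then show the strip forces that $\liminf$ to be $\leq 2\epsilon$ with positive probability. (The paper packages the last step as the entropy criterion, Theorem \ref{thm : general An sets}, with explicit trap sets $A^{b_+}_{n,\epsilon}$; you re-derive it inline, which is fine.) However, there are two genuine gaps in your execution. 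First, your Chebyshev step is applied to the \emph{random} set $T_n=S(b_-,b_+)\cap\g_{|x_n|}$: the quantity $m(T_n)$ is a random variable that is correlated with $x_n$ through $|x_n|$, so the inequality ``the $\rho_n^{b_+}$-mass of $\{g\in T_n:\rho_n^{b_+}(g)\leq e^{-2\epsilon n}\}$ is at most $e^{-2\epsilon n}m(T_n)$'' is not a statement about $\PP(x_n\in\cdot\mid b_-,b_+)$ for a fixed measurable set, and the bound $\PP(x_n\in T_n,\ \rho_n^{b_+}(x_n)\leq e^{-2\epsilon n}\mid b_-,b_+)\leq e^{-\epsilon n}$ does not follow as written. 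The paper plugs exactly this hole by replacing $|x_n|$ with a deterministic quantile $K_n:=\min\{k:\theta^{*n}(\g_k)\geq 1-p/2\}$, so that $S(b_-,b_+)\cap\g_{K_n}$ is a fixed set given $(b_-,b_+)$, and by using $\PP(|x_{\tau_n}|\geq K_n)>p/2$ to transfer the subexponential volume bound from $\g_{|x_{\tau_n}|}$ to $\g_{K_n}$. Second, your ``integrating over $E_n$'' step needs uniformity: the strip hypothesis gives convergence in probability for each fixed $(b_-,b_+)$, with a rate depending on $(b_-,b_+)$, while $E_n$ moves with $n$; to get $\PP(X_n\leq 2\epsilon)\geq c$ for all large $n$ you must first extract a set of $(b_-,b_+)$ of measure close to $1$ on which the convergence is uniform (the paper's Egorov step producing $Z$ and the sequence $\phi_n$).

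A smaller but still real issue is the interaction between the stopping-time reduction and the strip. After replacing $\mu$ by $\theta$, the walk is $(x_{\tau_n})$, so you must (a) verify that the event $\{e\in S(b_-,b_+)\}$ still has positive probability \emph{jointly} with the conditioning that defines the induced walk --- the paper chooses the set $L=\{\rho_1\leq j\}$ precisely so that $\overline{\PP}(e\in S\text{ and }g_1\in L)=q_1>0$, and runs the shift-invariance argument through the induced transformation $\mathfrak{T}$ on the bilateral path space, which also keeps $(B_-,\lambda_-)$ as a boundary for the reflected stopped walk; and (b) transfer the hypothesis $\frac{1}{n}\log^+ m(S\cap\g_{|x_n|})\to 0$ to $\g_{|x_{\tau_n}|}$, using $\tau_n/n\to E(\tau)$ and the finite first moment. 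None of these is fatal --- your skeleton is the right one and each gap is repaired by a device already present in the paper's proof --- but as written the argument does not close.
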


\subsection{Applications.} 

Let us now mention some concrete applications of these criteria. 
First of all, let us consider groups acting by isometries on spaces of nonpositive curvature. 

The action of a locally compact group $G$ on a metric space $X$ has \emph{bounded exponential growth} 
if there exist $C > 0$ and $o \in X$ such that for any $R > 0$ 
$$m(\{ g \in G \ : \ d(o, go) \leq R \}) \leq C e^{CR}.$$
Moreover, if $\mu$ has finite first moment on $X$, we define the \emph{rate of escape} of the random walk as 
$A := \lim_{n \to \infty} \frac{d(x_n o, o)}{n}$, where the limit exists and is constant a.s.
As a consequence of the ray approximation we obtain:

\begin{theorem} \label{T:intro-appl}
Let $G$ be a closed subgroup of the group of isometries of $X$, where $X$ is either: 
\begin{enumerate}
\item a proper Gromov hyperbolic space; or
\item a proper $CAT(0)$ metric space.
\end{enumerate}
Let $\mu$ be a spread-out probability measure on $G$ with finite first moment, and suppose that 
the action has bounded exponential growth. 
Then: 
\begin{itemize}
\item if the rate of escape $A = 0$, then the Poisson boundary of $(G, \mu)$ is trivial;
\item if $A > 0$, then the Poisson boundary of $(G, \mu)$ is $(\partial X, \nu)$, where 
$\partial X$ is the Gromov boundary of $X$ in case (1) and the visual boundary of $X$ in case (2), and $\nu$ is the hitting measure.
\end{itemize}
\end{theorem}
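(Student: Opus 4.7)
The plan is to apply the ray approximation criterion (Theorem \ref{ray:intro}). First, equip $G$ with the left-invariant pseudometric $d(g,h) := d_X(go, ho)$ pulled back from the orbit map. By the bounded exponential growth hypothesis the Haar measure of any $d$-ball of radius $R$ is at most $Ce^{CR}$, so $d$ is temperate; moreover $\mu$ has finite first moment with respect to $d$ by the finite first moment assumption on $X$. In both settings, almost sure convergence of $x_n o$ to a point $x_\infty \in \partial X$ is classical: Kaimanovich's convergence theorem in the hyperbolic case, and the Karlsson-Margulis theorem in the $CAT(0)$ case. Since the map $(x_n) \mapsto x_\infty$ is shift-invariant and measurable, the hitting measure $\nu$ makes $(\partial X, \nu)$ a $\mu$-boundary.

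If $A = 0$, I would apply the criterion with the trivial $\mu$-boundary $(\{*\}, \delta_*)$ and the constant map $\pi_n(*) := e$: the hypothesis reduces to $d_X(o, x_n o)/n \to 0$, which holds by assumption, giving triviality of the Poisson boundary.

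Suppose now $A > 0$. For each $b \in \partial X$, I would fix a geodesic ray $\gamma_b$ from $o$ to $b$ (unique in the $CAT(0)$ case; taken from a Borel cross-section in the hyperbolic case), and define $\pi_n : \partial X \to G$ by measurably selecting a group element whose orbit lies within bounded distance of $\gamma_b(An)$; such an element exists for $\nu$-a.e.\ $b$ because the sample path itself eventually provides one, and measurability follows from a Borel selection theorem. The key geometric input is the sublinear tracking estimate
$$
\frac{d_X(x_n o,\, \gamma_{x_\infty}(An))}{n} \longrightarrow 0 \quad \text{almost surely},
$$
which in case (1) follows from Kaimanovich's hyperbolic ray argument \cite{Kaimanovich-hyp94} (see also \cite{Tiozzo-tracking}), and in case (2) is the conclusion of Karlsson-Margulis. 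Combined with the triangle inequality this yields $d(x_n, \pi_n(x_\infty))/n \to 0$, and Theorem \ref{ray:intro} then identifies $(\partial X, \nu)$ as the Poisson boundary of $(G, \mu)$.

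The hardest step will be adapting the sublinear tracking results (originally formulated for finitely generated discrete groups) to the locally compact setting, together with the measurable construction of $\pi_n$ when the action is neither transitive nor cocompact. Both issues should be resolved by observing that the geometric tracking proofs rely only on finite first moment, convergence to $\partial X$, and a Borel-Cantelli argument which, in our context, uses the temperate bound on Haar balls established above in place of the counting estimates specific to discrete groups.
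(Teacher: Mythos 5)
Your proposal follows essentially the same route as the paper: the boundary convergence and sublinear tracking come from Kaimanovich's regularity theorem in the hyperbolic case and from Karlsson--Margulis in the CAT(0) case, bounded exponential growth makes the relevant gauges temperate, and the ray approximation criterion then identifies $(\partial X,\nu)$ (with the one-point boundary handling $A=0$ exactly as you describe). The one place where you diverge, and where you correctly sense trouble, is the construction of the maps $\pi_n:\partial X\to G$: for a non-cocompact, non-transitive action there may be \emph{no} group element whose orbit point lies within bounded distance of $\gamma_b(An)$, and the element "provided by the sample path" is only at sublinear distance and depends on the path, not just on $b$, so your selection argument as stated does not quite close. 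This can be patched (take $\pi_n(b)$ to nearly minimize $d(go,\gamma_b(An))$ over $g\in G$ and invoke a measurable selection theorem, noting that the infimum is $o(n)$ along a.e.\ path converging to $b$), but the paper avoids the issue entirely by using the gauge formulation of the criterion (Corollary \ref{cor:ray-crit}): it takes the uniformly temperate gauges $(\mathcal{G}^n(b))_k:=\{g\in G \,:\, d(go,\gamma_b(An))\le k\}$, centered at points of $X$ rather than at group elements, so that $|x_n|_{\mathcal{G}^n(x_\infty)}\le d(x_no,\gamma(An))+1=o(n)$ with no selection needed. If you switch to that version of the criterion, the "hardest step" you flag disappears and the rest of your argument goes through verbatim.
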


The corresponding statement for discrete groups is due to Kaimanovich \cite{Kaimanovich-hyp94} in case (1) and to Karlsson-Margulis \cite{KM} in case (2).
Note that this setting encompasses many classes of groups of interest in geometry and dynamics, such as automorphism 
groups of trees, Lie groups, automorphism groups of affine buildings, and affine groups. 
In Section \ref{S:app} we will see some of these applications in greater detail.

Moreover, our techniques also apply to non-hyperbolic settings such as automorphisms of Diestel-Leader graphs $D_{p,q}$ (which for $p, q \geq 3$ are neither hyperbolic nor CAT(0), and for $p \neq q$ are not quasi-isometric to any Cayley graph of a group  \cite{Eskin-F-W}).

\begin{theorem} \label{T:DL-intro}
Let $G$ be a closed subgroup of the group of automorphisms of the Diestel-Leader graph $DL_{p, q}$, and let $\mu$ be a spread-out probability measure on $G$
with finite first moment, and let $V$ be its vertical drift. Then: 
\begin{enumerate}
\item if $V = 0$, then the Poisson boundary of $(G, \mu)$ is trivial; 
\item if $V > 0$, then the Poisson boundary of $(G, \mu)$ is $\partial \mathbb{T}_p \times \{ \omega_2 \}$ with the hitting measure; 
\item  if $V < 0$, then the Poisson boundary of $(G, \mu)$ is $\{\omega_1 \} \times \partial \mathbb{T}_q$ with the hitting measure.
\end{enumerate}
\end{theorem}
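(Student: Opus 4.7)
The plan is to derive all three cases from the ray approximation criterion (Theorem~\ref{ray:intro}). Fix a base vertex $o = (o_1,o_2)\in V(DL_{p,q})$ and pull back the graph metric $d$ of $DL_{p,q}$ to a left-invariant pseudo-metric on $G$ via $d(g,h) := d(g\cdot o, h\cdot o)$. Since $DL_{p,q}$ is locally finite with exponential volume growth and vertex stabilizers in $\mathrm{Aut}(DL_{p,q})$ are compact, this pseudo-metric is temperate. Let $b_1, b_2$ denote the Busemann cocycles on $\mathbb{T}_p, \mathbb{T}_q$ with respect to $\omega_1, \omega_2$, normalized by $b_i(o_i) = 0$, and write $x_n\cdot o = (X_n,Y_n)$, so that $H_n := b_1(X_n) = -b_2(Y_n)$ is a $1$-Lipschitz additive cocycle. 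By Birkhoff's ergodic theorem, $H_n/n \to V$ almost surely.

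\textbf{Case $V = 0$.} Let $c_n$ be the confluence in $\mathbb{T}_p$ of the geodesic rays $[o_1,\omega_1)$ and $[X_n,\omega_1)$. Since any walk in a tree from $o_1$ to $X_n$ traverses every vertex of their unique connecting geodesic, $b_1(c_n) \geq H_n^- := \min_{k\leq n} H_k$, and hence
\[
d_{\mathbb{T}_p}(o_1, X_n) = H_n - 2 b_1(c_n) \leq |H_n| + 2|H_n^-|,
\]
with the symmetric bound on $\mathbb{T}_q$ involving $H_n^+ := \max_{k\leq n} H_k$. A straightforward check of profiles of up/down steps shows that the DL graph distance is given by $d(o, x_n\cdot o) = \max\bigl(d_{\mathbb{T}_p}(o_1, X_n), d_{\mathbb{T}_q}(o_2, Y_n)\bigr)$; combined with the elementary fact that $H_n/n\to 0$ forces $H_n^\pm/n\to 0$, we conclude $d(o, x_n\cdot o)/n \to 0$ almost surely. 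Theorem~\ref{ray:intro} applied with trivial boundary $B = \{\mathrm{pt}\}$ and $\pi_n \equiv e$ yields triviality of the Poisson boundary.

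\textbf{Case $V > 0$.} Here $-H_n = b_2(Y_n) \to -\infty$ forces $Y_n\to\omega_2$, and the a.s.\ finiteness of $H_n^-$ forces $X_n$ eventually to remain in a fixed sub-branch of $\mathbb{T}_p$ opposite $\omega_1$ and to converge to a random $\xi \in \partial\mathbb{T}_p \setminus\{\omega_1\}$. The law $\lambda$ of $(\xi,\omega_2)$ is $\mu$-stationary, so $(\partial\mathbb{T}_p\times\{\omega_2\},\lambda)$ is a $\mu$-boundary. For each $\xi\neq\omega_1$ and $n$, let $v_n(\xi)\in V(DL_{p,q})$ be the vertex whose $\mathbb{T}_p$-coordinate lies on $[o_1,\xi)$ at Busemann level $\lfloor Vn\rfloor$ and whose $\mathbb{T}_q$-coordinate lies on $[o_2,\omega_2)$ at level $-\lfloor Vn\rfloor$, and select a measurable $\pi_n(\xi)\in G$ whose orbit point is closest to $v_n(\xi)$. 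Using the DL distance formula, the tracking estimate
\[
\frac{1}{n}\, d\bigl(x_n\cdot o,\ v_n(\xi)\bigr) \to 0 \quad \text{a.s.}
\]
reduces to showing $d_{\mathbb{T}_p}(X_n, u_n(\xi)) = o(n)$ and $d_{\mathbb{T}_q}(Y_n, w_n) = o(n)$ for the two tree projections $u_n(\xi), w_n$ of $v_n(\xi)$. This is the standard quantitative tracking for tree random walks with positive Busemann drift: the Busemann level of the median $m(o_1, X_n, \xi)$ grows at rate $V$, by applying the ergodic theorem to the future infimum of the one-dimensional Busemann walk, and the walker's deviation from the limit ray is controlled by the difference between $H_n$ and this median level. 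Theorem~\ref{ray:intro} then identifies $(\partial\mathbb{T}_p\times\{\omega_2\},\lambda)$ as the Poisson boundary.

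The case $V<0$ is completely symmetric, swapping the two trees. The main technical obstacle is the tracking estimate in the $V\neq 0$ case: sublinearity of the walker's deviation from its limit ray is not immediate from tree convergence $X_n\to\xi$ alone, and requires a stationarity/ergodic argument showing that the infimum of the future Busemann walk stays within $o(n)$ of its current value, which is precisely where the finite first moment hypothesis on $\mu$ enters.
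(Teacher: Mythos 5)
Your proposal follows essentially the same route as the paper: make the pulled-back graph metric into a temperate gauge (the paper's Lemma \ref{L:autX}), track the two tree coordinates separately along rays $[o_1,\xi)$ and $[o_2,\omega_2)$ at Busemann level $\lfloor Vn\rfloor$, combine via the $DL$ distance formula, and conclude by ray approximation; the only structural difference is that the paper outsources the per-tree sublinear tracking to Kaimanovich's regularity theorem for $\delta$-hyperbolic spaces (\cite{Ka}, Theorem 7.2), whereas you sketch it by hand via the running extrema of the Busemann cocycle $H_n$. That hands-on route does work, and your closing remark correctly locates where finite first moment enters.

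Two concrete steps need repair, though. First, the distance on $DL_{p,q}$ is \emph{not} $\max\bigl(d_{\mathbb{T}_p}(o_1,X_n), d_{\mathbb{T}_q}(o_2,Y_n)\bigr)$; the correct formula (Bertacchi, quoted in the paper) is $d_1+d_2-|h_1(y_1)-h_1(z_1)|$, which is strictly larger than the max whenever both tree displacements exceed the level change. Your argument survives because you only ever need the upper bound $d\le d_1+d_2$, but the identity as stated is false. Second, the inequality $b_1(c_n)\ge H_n^-=\min_{k\le n}H_k$ is not justified by "any walk traverses the connecting geodesic": the orbit points $X_0,\dots,X_n$ are \emph{not} a walk in $\mathbb{T}_p$, since consecutive increments can move the orbit point a long way, and the interpolating geodesic $[X_{k-1},X_k]$ can dip below $\min(H_{k-1},H_k)$ by up to $\tfrac12 d_1(X_{k-1},X_k)$. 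The correct bound is $b_1(c_n)\ge H_n^- - \tfrac12\max_{k\le n} d_1(X_{k-1},X_k)$, and one must then invoke finite first moment plus Borel--Cantelli to get $\max_{k\le n}|g_k|_{\g}=o(n)$, so that the correction is sublinear. The same correction is needed wherever you bound confluence levels by running extrema of $H_k$ (including the future-infimum argument in the $V>0$ case). With these two repairs the proof is sound and matches the paper's.
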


For the definitions, see Sections \ref{S:DL} and \ref{S:horo}. The technique also applies to treebolic spaces, 
Sol-groups, and more general horocylic products.

\subsection{Historical remarks}

The Shannon-McMillan-Breiman theorem or \emph{asymptotic equipartition property} appeared first in the context of discrete stationary processes by work of Shannon \cite{Shannon}, McMillan  \cite{Mcmillan} ($L^1$ convergence), and Breiman \cite{Breiman} (almost sure convergence).  Later, their results have been generalized to continuous stationary process with densities by work of Moy \cite{Moy}, Perez \cite{Perez}, Kieffer \cite{Kieffer}, and Barron \cite{Barron}. 
In these results, martingale theory plays a crucial role. In the late 80's, Algoet-Cover  \cite{Algoet-Cover}  provided a new proof of almost sure convergence which relies on the Borel--Cantelli Lemma. 
Note that none of these results applies immediately, as the sequence $(x_n)$ produced by the random walk is not stationary.
Our Theorem \ref{thm-intro : bounded density} is however in the same spirit as the work of Algoet-Cover. 

Theorem \ref{ray:intro} has been announced (without proof) in \cite{Cartwright-Kaimanovich-Woess}, 
and is there applied to affine groups of trees.  
In \cite{Kaimanovich-Woess}, the authors prove the ray and strip criterion in the case of \emph{homogeneous Markov chains}, 
i.e. Markov chains on a countable set with a $G$-invariant distribution. This setting is intermediate between countable groups and general locally compact groups, being equivalent to considering random walks 
on locally compact groups where the measure $\mu$ is invariant by a compact open subgroup $K$ (\cite{Kaimanovich-Woess}, Proposition 2.9).  
Their proof uses once again the subadditivity of the $n$-step distribution (see \cite{Kaimanovich-Woess}, Theorem 4.2), which follows from having a countable state space, and which thus cannot be generalized to all measures on a locally compact group\footnote{As mentioned in  \cite{Kaimanovich-Woess}, page 324: ``On the other hand, the fact that $G$
has a countable homogeneous space $X$ still allows us to avoid technical problems
arising in the entropy theory of random walks on general locally compact groups".}. The key to our approach is a completely different proof of Theorem \ref{thm-intro : bounded density}, which holds without any countability assumption.
 
For discrete groups, ray approximation is due to \cite{Kaimanovich-maximal85}. Strip approximation for discrete groups is proven in \cite{Ka}, and the idea of strips originates from \cite{Ledrappier-Ballmann}; in the discrete case, indeed strip approximation holds under the more general condition of finite logarithmic moment. For our stopping time technique to work, though, we need finite first moment (see Remark \ref{R:fake}). 

Triviality and identification of the Poisson boundary for various classes of locally compact groups is proven, among others, by 
\cite{Choquet-Deny}, \cite{Furstenberg-semi}, \cite{Azencott}, \cite{Guivarch}, \cite{Raugi},   \cite{Cartwright-Kaimanovich-Woess},   \cite{Jaworski},  \cite{Jaworski-almost}, \cite{Bader-Shalom}, and  \cite{Brofferio}. 
Surveys are given in \cite{Babillot} and \cite{Erschler}.

\subsection*{Acknowledgements}
We are greatly indebted to Vadim Kaimanovich for suggesting the problem 
and providing valuable comments on the first version.
We also thank Alex Eskin, Anders Karlsson, and Wolfgang Woess for useful comments. 
G. T. is partially supported by NSERC and the Sloan Foundation.

\section{Furstenberg and differential entropies} \label{S:entropies}

By \emph{locally compact group} we will always mean a second countable, Hausdorff, locally compact topological group  $G$. 
Such a group admits a unique, up to scaling, left Haar measure $m$. Let $\mu$ be a probability measure on $G$ which is absolutely 
continuous with respect to $m$, and let us denote as $\rho := \frac{d \mu}{dm}$ the density. 
The $n$-fold convolutions of $\mu$ and $\rho$ are denoted by $\mu^{*n}$ and $\rho_n$, respectively. One can verify easily that $d\mu^{*n}=\rho_n \ dm$.

Let $(\Omega, \PP)$ be the space of sample paths for the random walk $(G,\mu)$, and let $(\partial G, \nu)$ be the Poisson boundary.  
A sample path is denoted by $\x=(x_n)$. A $\mu$-boundary $(B, \lambda)$ is a quotient of the Poisson boundary with respect to a $G$-invariant measurable partition. Let $(B,\lambda)$ be a $\mu$--boundary, and let the quotient map be
$$\begin{array}{c}
(\Omega,\PP)\to (B,\lambda)\\
(x_n)\to x_\infty^\lambda.
\end{array}
$$
In this case, $\lambda$ is $\mu$--stationary ($\mu*\lambda=\lambda$), therefore, for $\lambda$-almost every $\gamma$ in $B$ and any natural number $n$, we have 
\begin{equation}
\int_G \frac{dg\lambda}{d\lambda}(\gamma) \ d\mu^{*n}(g) = 1.
\end{equation}
Let $(\partial G,\nu)$ be the Poisson boundary of $(G,\mu)$ and $x_\infty:=x_\infty^\nu$.

\subsection{Furstenberg entropy}

The \emph{Furstenberg entropy} of the $\mu$--boundary $(B,\lambda)$ is defined as 
\begin{equation} \label{E:furst}
h(\lambda) := \int_\Omega \log\frac{dx_1\lambda}{d\lambda}(x^\lambda_\infty)\ d\PP(\x)= \int_{G\times B} \log\frac{dg\lambda}{d\lambda}(\gamma) \  dg\lambda(\gamma) d\mu(g).
\end{equation}

\begin{lemma} \label{L:well-defined}
The integral in \eqref{E:furst} above is always well-defined and non-negative, however it may equal $+ \infty$. 
\end{lemma}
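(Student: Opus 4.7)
The plan is to interpret the inner integral (in $\gamma$) as a Kullback--Leibler divergence of $g\lambda$ with respect to $\lambda$, and then conclude non-negativity from the Gibbs inequality.

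The first step is to invoke $\mu$-stationarity of $\lambda$, namely $\lambda=\int_G g\lambda\,d\mu(g)$, which forces $g\lambda\ll\lambda$ for $\mu$-almost every $g$. Thus the Radon--Nikodym derivative $\phi_g(\gamma):=\frac{dg\lambda}{d\lambda}(\gamma)$ exists for $\mu$-a.e.\ $g$, and a standard construction yields a jointly measurable version in $(g,\gamma)$ (the Radon--Nikodym cocycle attached to the stationary measure $\lambda$). Verifying this joint measurability is the only genuinely technical step; once it is in hand the remainder is a short computation, and this is where I expect the main, though routine, obstacle to lie.

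Next, for each such $g$, I would rewrite the inner integral using $dg\lambda=\phi_g\,d\lambda$:
\[
\int_B \log\frac{dg\lambda}{d\lambda}(\gamma)\,dg\lambda(\gamma) \;=\; \int_B \phi_g(\gamma)\log\phi_g(\gamma)\,d\lambda(\gamma).
\]
Since $x\log x\geq -1/e$ for $x\geq 0$, the negative part of the integrand is bounded on the probability space $(B,\lambda)$, so this integral is unambiguously defined as an element of $[-1/e,+\infty]$. To upgrade to non-negativity, I would use the elementary convex inequality $x\log x\geq x-1$ for $x\geq 0$ (with equality at $x=1$). Integrating against $\lambda$ and using $\int_B \phi_g\,d\lambda=g\lambda(B)=1$ yields
\[
\int_B \phi_g\log\phi_g\,d\lambda \;\geq\; \int_B(\phi_g-1)\,d\lambda \;=\; 0,
\]
which is the standard Gibbs inequality and shows that the inner integral lies in $[0,+\infty]$ for $\mu$-a.e.\ $g$.

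Finally, integrating over $g\in G$ against $\mu$, the integrand is a non-negative (possibly $+\infty$-valued) measurable function, so the double integral $h(\lambda)$ is unambiguously defined as an element of $[0,+\infty]$, which is exactly the content of the lemma. The possibility $h(\lambda)=+\infty$ simply reflects the absence of any a priori upper bound on the relative entropies $\int_B \phi_g\log\phi_g\,d\lambda$ as $g$ varies; no separate argument is needed for that clause beyond noting the extended-real-valued nature of the construction.
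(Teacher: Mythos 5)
Your proof is correct, and it is the same convexity argument as the paper's, just packaged differently. The paper sets $F(\x) = \bigl( \tfrac{dx_1\lambda}{d\lambda}(x_\infty^\lambda) \bigr)^{-1}$, computes $\int_\Omega F \, d\PP = 1$ (so $F \in L^1(\PP)$), and then applies the tangent-line inequality $-\log F \geq F-1$ globally on $\Omega$: this simultaneously shows that the negative part of $-\log F$ is dominated by an integrable function (well-definedness) and that $h(\lambda) = \int -\log F \, d\PP \geq \int (F-1)\, d\PP = 0$. You instead work fiberwise in $g$, rewrite the inner integral as the relative entropy $\int_B \phi_g \log\phi_g \, d\lambda$, and apply the tangent-line inequality $x\log x \geq x-1$ at the level of $\phi_g$ rather than $\phi_g^{-1}$; the two inequalities are the same convexity statement read against the two measures $d\lambda$ and $dg\lambda$. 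Your route has two small advantages: the uniform bound $x\log x \geq -1/e$ gives well-definedness of each inner integral (and of the double integral against $\mu\otimes\lambda$) immediately, without first needing the normalization computation, and working with $\phi_g\log\phi_g$ (with $0\log 0 = 0$) sidesteps the issue, which the paper addresses in a separate remark, of whether the reciprocal $\phi_g^{-1}$ is defined where $\phi_g$ vanishes. You are also right to flag joint measurability of $(g,\gamma)\mapsto \tfrac{dg\lambda}{d\lambda}(\gamma)$ as the one genuinely technical point; the paper takes this for granted.
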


\begin{proof}
Let us define $F : \Omega \to \mathbb{R}$ as 
$$
F(\x) :=\left( \frac{d x_1 \lambda}{d \lambda}(x_\infty^\lambda) \right)^{-1}.
$$
Note that  $F(\x) \geq 0$ and it belongs to $L^1(\PP)$, because
$$\int_\Omega \left( \frac{d x_1 \lambda}{d \lambda}(x_\infty^\lambda) \right)^{-1} \ d \PP(\x) = 
\int_{B \times G} \left( \frac{d g \lambda}{d \lambda}(\gamma) \right)^{-1} \ d g\lambda(\gamma) d\mu(g) = $$
$$ = 
\int_{B \times G} \left( \frac{d g \lambda}{d \lambda}(\gamma) \right)^{-1} \frac{d g\lambda}{d\lambda}(\gamma) \ d\lambda(\gamma) d\mu(g) = 
\int_{B \times G} 1  \ d\lambda(\gamma) d\mu(g) = 1$$
Moreover, one has $-\log F \geq F - 1$, and the function $F$ belongs to $L^1(\PP)$, hence 
$-\log F$ is bounded below by an $L^1$ function, so its negative part is integrable
and the integral of $- \log F$ may be finite or $+ \infty$. Finally, 
$$h(\lambda) = \int_\Omega - \log F \ d\PP \geq \int_{\Omega} ( F - 1) \ d\PP = 1 - 1 = 0.$$
\end{proof}

\begin{remark}
Note that in several places in the literature (i.e. \cite{Babillot}, \cite{Furman}) the definition of Furstenberg entropy is given under the hypothesis that the closed semigroup $sgr(\mu)$ generated by the support of $\mu$ is a group. This is helpful since in general one only has $g\lambda \ll \lambda$ for $g \in sgr(\mu)$, but is not true that $g^{-1} \lambda \ll \lambda$ . 
With our definition, however, we do not need this assumption. Indeed, the definition of $F$ in Lemma \ref{L:well-defined} is well posed, since  
$$\PP\left( \frac{d x_1 \lambda}{d \lambda}(x_\infty^\lambda) = 0 \right)  = \int_\Omega 1_{\left\{ \frac{d x_1 \lambda}{d \lambda}(x_\infty^\lambda) = 0 \right\}}(\x) \ d\PP(\x)=$$
$$ = \int_{B \times G} 1_{\left\{ \frac{dg \lambda}{d\lambda}(\gamma) = 0 \right\}} \ dg\lambda(\gamma) d\mu(g) = \int_{B \times G} 1_{\left\{ \frac{dg \lambda}{d\lambda}(\gamma) = 0 \right\}}  \frac{dg\lambda}{d\lambda}(\gamma) \ d\lambda(\gamma)d\mu(g) =  0.$$
\end{remark}

\begin{lemma} 
If $h(\lambda) $ is finite, then $g(\x) := \log \frac{dx_1 \lambda} {d\lambda}(x_\infty^\lambda)$ belongs to $L^1(\PP)$.
\end{lemma}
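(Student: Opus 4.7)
The plan is to leverage the work already done in Lemma \ref{L:well-defined}. Writing $F(\x) = \left(\frac{dx_1\lambda}{d\lambda}(x_\infty^\lambda)\right)^{-1}$, the function in question satisfies $g(\x) = -\log F(\x)$, and we have already shown that $F \in L^1(\PP)$ with $\int F\, d\PP = 1$. The task then reduces to showing that both the positive and negative parts of $-\log F$ are integrable, given that the integral $h(\lambda) = \int (-\log F)\, d\PP$ is finite.

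First I would control the negative part $g^- := \max(-g,0)$. The elementary inequality $-\log t \geq 1 - t$, valid for all $t > 0$, gives $g \geq F - 1$ pointwise, hence $g^- \leq (1-F)^+$. Since $F \geq 0$, the right-hand side is bounded above by $1$, so $g^-$ is in fact bounded, and in particular $\int g^-\, d\PP \leq 1 < \infty$. This step is essentially the same soft inequality that drove the nonnegativity argument in Lemma \ref{L:well-defined}, and it is the main technical input.

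Next, since $g^-$ is integrable, the integral defining $h(\lambda)$ makes sense as an extended real number, and one can decompose
\begin{equation*}
h(\lambda) = \int_\Omega g^+\, d\PP - \int_\Omega g^-\, d\PP.
\end{equation*}
The hypothesis $h(\lambda) < \infty$ together with finiteness of $\int g^-\, d\PP$ therefore forces $\int g^+\, d\PP < \infty$. Combining these, $\int |g|\, d\PP = \int g^+\, d\PP + \int g^-\, d\PP$ is finite, which is precisely the assertion $g \in L^1(\PP)$.

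There is no real obstacle here; the only thing to double-check is that the pointwise bound $g \geq F - 1$ is applied on the almost-sure event $\{F > 0\}$, which has full measure by the computation at the end of the Remark. Everything else is bookkeeping with positive and negative parts.
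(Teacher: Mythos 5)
Your proof is correct and rests on the same ingredients as the paper's: the elementary inequality $-\log t \geq 1-t$ to control the negative part of $g$, and the finiteness of $h(\lambda)$ to control the positive part. The paper merely packages the two bounds into the single pointwise estimate $\lvert -\log F \rvert \leq 2F - \log F$ and integrates, so the difference from your positive/negative-part decomposition is only bookkeeping.
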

\begin{proof}
Let $F(\x):= \left(\frac{d x_1\lambda} {d \lambda}(x_\infty^\lambda)\right)^{-1}$, then $F$ belongs to $L^1(\PP)$ by Lemma \ref{L:well-defined}. Hence
$$
|-\log F(\x)| = | F(\x)-\log F(\x)  - F(\x)| \leq |F(\x)- \log F(\x)|+ |F(\x)| = 2F(\x)-\log F(\x),
$$
therefore, $\int_\Omega |-\log F(\x)|  \ d\PP(\x) \leq 2+h(\lambda).$
\end{proof}

\begin{theorem}\label{theo : Furstenberg-entropy}
Let $(B,\lambda)$ be a $\mu$--boundary. Then for $\PP$--almost every sample path $\x=(x_n)$ we have 
$$
\lim_n \frac{1}{n} \log \frac{dx_n\lambda}{d\lambda}(x^\lambda_\infty)=h({\lambda}).
$$
\end{theorem}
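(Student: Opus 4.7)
\smallskip

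\textbf{Plan.} The idea is to telescope $\log \frac{dx_n\lambda}{d\lambda}$ as a cocycle sum along the Bernoulli shift on the space of increments, and then apply Birkhoff's ergodic theorem. Concretely, write $\omega = (g_k)_{k \geq 1} \in \Omega = G^{\mathbb{N}}$ with $\PP = \mu^{\otimes \mathbb{N}}$, and let $\sigma : \Omega \to \Omega$ be the Bernoulli shift. I would first note the cocycle identity
$$
\log \frac{dx_n\lambda}{d\lambda}(x_\infty^\lambda) = \sum_{k=0}^{n-1} \log \frac{dx_{k+1}\lambda}{dx_k\lambda}(x_\infty^\lambda),
$$
and then compute that for $\PP$-almost every $\omega$,
$$
\frac{dx_{k+1}\lambda}{dx_k\lambda}(\gamma) = \frac{dg_{k+1}\lambda}{d\lambda}(x_k^{-1}\gamma),
$$
by a direct change of variable on $B$.

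The second step is to recognize the right-hand side as the shift orbit of a single function. Since the boundary map is $G$-equivariant, $x_k^{-1} x_\infty^\lambda(\omega) = x_\infty^\lambda(\sigma^k \omega)$, and of course $g_{k+1}(\omega) = g_1(\sigma^k \omega)$. Defining
$$
F(\omega) := \log \frac{d g_1(\omega)\lambda}{d\lambda}\bigl( x_\infty^\lambda(\omega) \bigr) = \log \frac{dx_1 \lambda}{d\lambda}(x_\infty^\lambda),
$$
the telescoping identity becomes
$$
\log \frac{dx_n\lambda}{d\lambda}(x_\infty^\lambda) \;=\; \sum_{k=0}^{n-1} F(\sigma^k \omega).
$$
By definition of Furstenberg entropy, $\int F \, d\PP = h(\lambda)$, and the preceding lemma shows $F \in L^1(\PP)$ whenever $h(\lambda)$ is finite.

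The third step is to invoke Birkhoff's pointwise ergodic theorem for $\sigma$, which is ergodic (in fact Bernoulli) on $(\Omega, \PP)$ because the increments $g_i$ are i.i.d. This immediately yields
$$
\lim_n \frac{1}{n}\log \frac{dx_n\lambda}{d\lambda}(x_\infty^\lambda) = h(\lambda)
$$
$\PP$-almost surely when $h(\lambda) < \infty$. In the case $h(\lambda) = +\infty$, one observes that by Lemma \ref{L:well-defined} the negative part $F^-$ is dominated by the $L^1$ function $(\frac{dx_1\lambda}{d\lambda}(x_\infty^\lambda))^{-1} - 1$, so applying Birkhoff to the truncations $\min(F, N)$ and letting $N \to \infty$ by monotone convergence gives the same limit, now equal to $+\infty$.

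\textbf{Where the difficulty lies.} The only non-routine step is the change-of-variables identity for the Radon--Nikodym derivative $\frac{dx_{k+1}\lambda}{dx_k\lambda}$; once this is set up, everything reduces to a direct application of Birkhoff on the i.i.d.\ shift, with the key conceptual point being that the quantity $\log \frac{dx_n\lambda}{d\lambda}(x_\infty^\lambda)$ is an \emph{additive} cocycle over $\sigma$ whose generator has expectation $h(\lambda)$.
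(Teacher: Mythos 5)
Your proof is correct and follows essentially the same route as the paper: telescope $\log \frac{dx_n\lambda}{d\lambda}(x_\infty^\lambda)$ into an additive cocycle over the shift on the space of increments and apply Birkhoff's ergodic theorem to the generator, whose expectation is $h(\lambda)$ and whose negative part is integrable by the preceding lemma. The only (immaterial) difference is in the case $h(\lambda)=+\infty$, where you truncate while the paper applies Birkhoff separately to the $L^1$ function $F$ and the nonnegative function $F-\log F$ and subtracts.
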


Note that strictly speaking we do not need $h(\lambda)$ to be finite for the limit to exist (even though, if $h(\lambda) = + \infty$, then the limit is infinite). 

\begin{proof}
Recall that the function $F$ defined in Lemma \ref{L:well-defined} belongs to $L^1(\PP)$, and $h(\lambda) = \int_\Omega -\log F \ d \PP$.
On the other hand, we  have $F - \log F \geq 0$, and if $h(\lambda)=-\int \log F \ d \PP < \infty$ then $F-\log F$ belongs to $L^1(\PP)$.   Applying Birkhoff's pointwise ergodic theorem twice to the functions $F$ and $F-\log F$ with respect to $U$,  the ergodic transformation induced by the shift on the space of increments, and taking the difference, implies that 
$$
\lim_n \frac{1}{n} \sum_{k=1}^{n} \log F \circ U^{k-1}(\x) = h(\lambda)
$$
for $\PP$--almost every sample path $\x$.  Observe that $\frac{d g_1 g_2d\lambda}{d\lambda}(\gamma)=\frac{d g_2\lambda}{d g^{-1}_1\lambda}(g^{-1}_1\gamma)$. Let $x_n= g_1 g_2\cdots g_n$ where $g_i$ has the law $\mu$, then 
$$
\frac{dx_n\lambda}{d\lambda}(x^\lambda_\infty)=\prod_{k=1}^{n} \frac{d g_k\lambda}{d\lambda}(x_{k-1}^{-1}x^\lambda_\infty)=\prod_{k=1}^{n} \frac{d(U^{k-1}\x)_1\lambda}{d\lambda}(U^{k-1}\x)^\lambda_\infty,
$$
and consequently, $  \frac{1}{n} \sum_{k=1}^{n} \log F \circ U^{k-1}(\x) = \frac{1}{n} \log \frac{dx_n\lambda}{d \lambda} \rightarrow h(\lambda)$.
\end{proof}

\subsection{Differential entropy}

Let $m$ be a left Haar measure on $G$, and let $\mu$ be a probability measure on $G$ absolutely continuous w.r.t. $m$. We denote as $\rho := \frac{d \mu}{d m}$
the density function. 
Then the \emph{differential entropy} of $\mu$ is defined as 
$$H(\mu) := - \int_G \log \rho(g) \ d \mu(g).$$
For any $n$, we define the $n^{th}$ differential entropy as the entropy of the $n^{th}$ step convolution of $\mu$. That is, 
$$
H_n:= H(\mu^{*n}) = -\int_G \log\rho_n(g) \ d\mu^{*n}(g)=-\int_\Omega \log\rho_n(x_n) \ d\PP(\x).
$$
Let us also define the \emph{mutual information} of two random variables $X, Y$ defined on the same space as 
$$I(X, Y) := \int \log \frac{dP_{(X, Y)}}{dP_X \otimes P_Y} \ dP_{(X, Y)}$$
where $P_X \otimes P_Y$ is the product of the laws of $X$ and $Y$, and $P_{(X, Y)}$ is the joint law of $(X, Y)$. 
Differential entropy satisfies the following basic properties. 

\begin{theorem}[Derriennic \cite{De}, page 253] \label{theo : Derriennic}
Let $(x_n)$ be a random walk on a locally compact group $G$ with distribution $\mu$, and let $(\partial G, \nu)$ its Poisson boundary. Then:
\begin{enumerate}
\item If  $I(x_1, x_k)$ is finite for some $k$, then the sequence $(I(x_1, x_n))_{n \geq k}$ is decreasing and 
$$
\lim_{n \to \infty} I(x_1, x_n)=\inf_{n\geq k} I(x_1, x_n)=h(\nu).
$$
Moreover, if $\mu$ is absolutely continuous with respect to the left Haar measure, we have: 
\item if $H_n$ and $H_{n-1}$ are finite, then $I(x_1, x_n)=H_n-H_{n-1}$;
\item if $H_n$ is finite for all $n$, then 
$$
\lim_{n \to \infty} I(x_1, x_n)=\lim_{n \to \infty} \frac{1}{n} H_n= h(\nu).
$$
\end{enumerate}
\end{theorem}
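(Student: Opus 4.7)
The plan is to prove the three claims in the order (2), (1), (3): (2) is a direct computation, the monotonicity in (1) follows from data processing while the identification of its limit parallels the Furstenberg entropy formula \eqref{E:furst}, and (3) is a Ces\`aro--Stolz consequence of (1) and (2).

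For (2), assume $H_n, H_{n-1} < \infty$. Writing $x_n = x_1 \cdot (g_2 g_3 \cdots g_n)$ with the product $g_2 \cdots g_n$ distributed as $\mu^{*(n-1)}$ independently of $x_1$, the joint density of $(x_1, x_n)$ with respect to $m \otimes m$ is $\rho(g)\rho_{n-1}(g^{-1}k)$, while the product of marginals has density $\rho(g)\rho_n(k)$, so
$$
I(x_1, x_n) = \int_{G \times G} \log \frac{\rho_{n-1}(g^{-1}k)}{\rho_n(k)}\, \rho(g)\rho_{n-1}(g^{-1}k)\, dm(g)\, dm(k).
$$
The change of variable $k = gh$, which preserves $m$ by left-invariance, turns the numerator term into $\int \log \rho_{n-1}(h)\, \rho_{n-1}(h)\, dm(h) = -H_{n-1}$, while the denominator term (after undoing the change back to $k$) becomes $\int \log \rho_n(k)\, \rho_n(k)\, dm(k) = -H_n$. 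Subtracting, $I(x_1, x_n) = H_n - H_{n-1}$.

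For (1), monotonicity follows from the data processing inequality applied to the Markov triple $(x_1, x_n, x_{n+1})$ (since $g_{n+1}$ is independent of $(x_1, \ldots, x_n)$ and $x_{n+1} = x_n g_{n+1}$): $I(x_1, x_{n+1}) \leq I(x_1, x_n)$. The same reasoning applied to $(x_1, x_n, x_\infty)$ gives $I(x_1, x_n) \geq I(x_1, x_\infty)$ for every $n$. To compute $I(x_1, x_\infty)$, note that the conditional law of $x_\infty$ given $x_1 = g$ is $g\nu$ and the marginal law of $x_\infty$ is $\nu$ by $\mu$-stationarity, hence the Radon--Nikodym derivative is $dg\nu/d\nu$ and
$$
I(x_1, x_\infty) = \int_G \int_{\partial G} \log \frac{dg\nu}{d\nu}(\gamma)\, dg\nu(\gamma)\, d\mu(g) = h(\nu),
$$
recovering the defining formula \eqref{E:furst}. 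This gives $\lim_n I(x_1, x_n) \geq h(\nu)$. For the reverse inequality I would invoke continuity of mutual information along the decreasing family $\mathcal{F}_n := \sigma(x_n, x_{n+1}, \ldots)$: by the Markov property $I(x_1, \mathcal{F}_n) = I(x_1, x_n)$, and $\bigcap_n \mathcal{F}_n$ is the tail $\sigma$-algebra of the walk, which is generated by $x_\infty$ by the very definition of the Poisson boundary. The finiteness hypothesis $I(x_1, x_k) < \infty$ legitimizes the classical monotone continuity theorem for mutual information (Dobrushin--Pinsker) and yields $I(x_1, x_n) \downarrow I(x_1, x_\infty) = h(\nu)$.

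Finally, (3) follows from (1) and (2) by Ces\`aro--Stolz:
$$
\frac{H_n - H_1}{n - 1} = \frac{1}{n-1} \sum_{k=2}^{n} I(x_1, x_k) \longrightarrow h(\nu),
$$
so $H_n / n \to h(\nu)$. The main obstacle I anticipate is the reverse inequality in (1): one must justify continuity of mutual information under a decreasing family of $\sigma$-algebras in the locally compact (possibly non-discrete) setting, and confirm that the tail $\sigma$-algebra of the walk coincides with $\sigma(x_\infty)$ even when $\nu$ is non-atomic. All remaining steps are either algebraic manipulations of Haar densities or routine consequences of the Markov property.
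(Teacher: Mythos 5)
First, a point of comparison: the paper does not prove this statement at all --- it is quoted as Derriennic's theorem (\cite{De}, page 253), with only the remark afterwards explaining that Derriennic (i) defines the asymptotic entropy as $I(x_1,\mathcal{A})$ for $\mathcal{A}$ the tail $\sigma$-algebra and (ii) proves it equals $h(\nu)$. So there is no internal proof to measure you against; what you have written is essentially a reconstruction of Derriennic's own route. Your parts (2) and (3) are correct and complete: the density of the joint law of $(x_1,x_n)$ is $\rho(g)\rho_{n-1}(g^{-1}k)$, the left-invariance of $m$ justifies the change of variables, the finiteness of $H_{n-1}$ and $H_n$ legitimizes splitting the logarithm, and the telescoping/Ces\`aro step is routine. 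The monotonicity in (1) via data processing and the computation $I(x_1,x_\infty)=h(\nu)$ are also fine.

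The genuine gap is in the sentence ``$\bigcap_n\mathcal{F}_n$ is the tail $\sigma$-algebra of the walk, which is generated by $x_\infty$ by the very definition of the Poisson boundary.'' It is not true by definition. In this paper (and in Derriennic's setting) the Poisson boundary is the space of ergodic components of the \emph{shift}, i.e.\ it corresponds to the shift-invariant (stationary) $\sigma$-algebra of the path space, whereas $\bigcap_n\sigma(x_n,x_{n+1},\dots)$ is the tail $\sigma$-algebra. That these two coincide $\mathbb{P}$-mod $0$ for random walks on groups is a theorem (Derriennic's $0$--$2$ law; see also Kaimanovich), and it is precisely one of the two ``claims'' the paper says the cited theorem summarizes. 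As written, your argument proves $I(x_1,x_n)\downarrow I(x_1,\mathcal{A})$ with $\mathcal{A}$ the tail, and separately $I(x_1,\sigma(x_\infty))=h(\nu)$; the bridge between them must be supplied by that theorem, not by definition. The other invocation --- downward continuity of mutual information along a decreasing family of $\sigma$-algebras under the finiteness hypothesis $I(x_1,x_k)<\infty$ (Pinsker/Dobrushin, valid here since the path space is standard Borel and regular conditional probabilities exist) --- is the technical heart of Derriennic's proof and is legitimate to cite, though you should be aware that without the finiteness assumption the downward continuity can fail, which is exactly why the hypothesis appears in item (1). With the tail-vs-invariant identification stated as a cited theorem rather than a definition, your outline is a faithful and correct reconstruction.
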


Let us remark that Derriennic first defines the \emph{asymptotic entropy} $\mathfrak{h}(\mu)$ of a random walk as the mutual information 
$$\mathfrak{h}(\mu) := I(x_1, \mathcal{A})$$
where $\mathcal{A}$ is the tail $\sigma$-algebra. Then, he proves that the asymptotic entropy equals the 
Furstenberg entropy $h(\nu)$ of the Poisson boundary $(\partial G, \nu)$. The theorem above summarizes both claims. 

\subsubsection{Fake differential entropy }
When $G$ is a discrete group, the differential entropy coincides with the (Shannon) entropy.  
In this case, $H_1$ being finite implies that $H_n \leq n H_1$ for all $n \geq 1$. 
This follows from the fact that in the discrete case we have $\mu^{*(n+1)}(gh) \geq \mu^{*n}(g) \mu(h)$. 
However, as the following example shows, the sub-additivity property does not hold for the differential entropy on locally compact groups
(hence, the usual proof of Shannon's theorem in the discrete case cannot carry to non-discrete groups). 

\begin{example}\label{ex : fake}
Let $\mu$ be uniformly distributed on the interval $(0,1)$, then $H(\mu)=H_1=0$, but 
$$
\rho_2(x)= \begin{cases}
x & 0<x<1 \\
2-x & 1 \leq x <2\\
0 & \textup{otherwise}
\end{cases}
$$
Therefore, $H(\mu^{*2})=H_2= 1/2$, and consequently, $H(\mu^{*2}) \not \leq 2 H(\mu)=0$. 
\end{example}

\section{The Shannon-McMillan-Breiman theorem} \label{S:AEP}

The goal of this section is to prove our key result, namely a weak form of the Shannon-McMillan-Breiman theorem for locally compact groups (Theorem \ref{thm : bounded density}). 
We will start with a simple lemma (which appears in  \cite{Algoet-Cover}). 
\begin{lemma}\label{Lem : Borel}
Let $(f_n)$ be a sequence of positive measurable functions on a probability space $(M, \kappa)$ such that $\int f_n \ d\kappa \leq C$ is uniformly bounded. 
Then 
$$
\limsup_n\frac{1}{n} \log{f_n(x)} \leq 0 \ \ \kappa\mbox{-a.e.}
$$
\end{lemma}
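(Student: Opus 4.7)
The plan is to combine a Markov-type tail bound with the first Borel--Cantelli lemma, which is the standard route used in \cite{Algoet-Cover} for exactly this kind of statement.

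First I would fix $\varepsilon > 0$ and apply Markov's inequality to each $f_n$: since $f_n \geq 0$ and $\int f_n\,d\kappa \leq C$,
$$\kappa\bigl(\{x : f_n(x) \geq e^{\varepsilon n}\}\bigr) \leq e^{-\varepsilon n} \int f_n\,d\kappa \leq C e^{-\varepsilon n}.$$
The geometric series $\sum_n C e^{-\varepsilon n}$ is finite, so by the Borel--Cantelli lemma the set
$$A_\varepsilon := \{ x : f_n(x) \geq e^{\varepsilon n} \text{ for infinitely many } n \}$$
has $\kappa$-measure zero. For every $x$ outside $A_\varepsilon$, $f_n(x) < e^{\varepsilon n}$ eventually, hence $\frac{1}{n}\log f_n(x) < \varepsilon$ for all large $n$, and therefore $\limsup_n \frac{1}{n}\log f_n(x) \leq \varepsilon$.

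Finally I would take a countable sequence $\varepsilon_k \downarrow 0$ (e.g.\ $\varepsilon_k = 1/k$) and let $A := \bigcup_k A_{\varepsilon_k}$, which is still $\kappa$-null as a countable union of null sets. For every $x \notin A$ we obtain $\limsup_n \frac{1}{n}\log f_n(x) \leq \varepsilon_k$ for every $k$, and hence $\limsup_n \frac{1}{n}\log f_n(x) \leq 0$, which is the desired conclusion.

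There is essentially no obstacle here: the statement is a routine exercise once one thinks of Markov plus Borel--Cantelli. The only mild subtlety is that $f_n$ could vanish or be very small, but this only helps the inequality (indeed $\log f_n(x)$ may equal $-\infty$, in which case the $\limsup$ is automatically $\leq 0$); positivity of $f_n$ is used solely to apply Markov's inequality without sign issues.
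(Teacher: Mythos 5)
Your proof is correct and follows essentially the same route as the paper's: Markov's inequality gives the tail bound $\kappa(\{f_n \geq e^{\varepsilon n}\}) \leq C e^{-\varepsilon n}$, Borel--Cantelli kills the exceptional set for each fixed $\varepsilon$, and a countable union over $\varepsilon_k \downarrow 0$ finishes. The only difference is that you spell out the final countable-union step, which the paper leaves implicit.
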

\begin{proof}
For any $\epsilon>0$, we have  
$$
\kappa(\left\{x: f_n(x)>e^{n\epsilon}\right\}) \leq e^{-n\epsilon} \int f_n \ d\kappa \leq C  e^{-n\epsilon}.
$$
By Borel-Cantelli, $\kappa(\left\{x: f_n(x)>e^{n\epsilon}\ \mbox{infinitely often }\right\}) =0$.
\end{proof}

In the special case of compact groups, this immediately implies the Shannon-McMillan-Breiman theorem. 
In fact, it is easy to see that the Poisson boundary of a compact group is trivial, so the asymptotic entropy 
must be zero whenever is finite, see \cite{Avez76} and \cite{De}. We obtain:

\begin{corollary}
Let $G$ be a second countable compact group. Let $\mu$ be absolutely continuous w.r.t. the Haar (probability) measure $m$ with density $\rho$. If $\rho \leq C$, then 
$$
\lim_{n\to \infty}\frac{1}n \log{\rho_n(x_n) }=0
$$
for $\PP$--almost every sample path $\x=(x_n)$. 
\end{corollary}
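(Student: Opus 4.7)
The plan is to apply Lemma \ref{Lem : Borel} twice on the path space $(\Omega,\PP)$: once to the sequence $\rho_n(x_n)$ to obtain $\limsup_n \frac{1}{n}\log \rho_n(x_n) \leq 0$, and once to the sequence $1/\rho_n(x_n)$ to obtain the matching $\liminf_n \frac{1}{n}\log \rho_n(x_n) \geq 0$. The corollary then follows by combining the two bounds. In this compact setting no appeal to Theorem \ref{thm-intro : bounded density} is needed; the hypothesis $\rho \leq C$ together with the compactness of $G$ (so that $m(G)=1$) is what makes both Borel--Cantelli arguments essentially immediate.

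First, I would observe that the bound $\rho_n \leq C$ propagates under convolution. Since $m$ is a probability measure, induction gives
$$
\rho_{n+1}(g) \;=\; \int_G \rho_n(h)\,\rho(h^{-1}g)\, dm(h) \;\leq\; C\int_G \rho_n(h)\, dm(h) \;=\; C.
$$
Using this, the integral
$$
\int_\Omega \rho_n(x_n)\, d\PP(\x) \;=\; \int_G \rho_n(g)^2\, dm(g) \;\leq\; C \int_G \rho_n\, dm \;=\; C
$$
is uniformly bounded in $n$. Applying Lemma \ref{Lem : Borel} to $f_n(\x):=\rho_n(x_n)$ on the probability space $(\Omega,\PP)$ yields $\limsup_n \frac{1}{n}\log \rho_n(x_n) \leq 0$ almost surely. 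This direction does not really use compactness, only that $\rho$ is bounded.

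For the matching lower bound I would work with $\tilde f_n(\x):=1/\rho_n(x_n)$, which is $\PP$-almost surely finite since $\PP(\rho_n(x_n)=0)=\int_G \mathbf{1}_{\{\rho_n=0\}}\rho_n\, dm = 0$. Here compactness is essential: because $m(G)=1$,
$$
\int_\Omega \frac{1}{\rho_n(x_n)}\, d\PP(\x) \;=\; \int_G \frac{1}{\rho_n(g)}\,\rho_n(g)\, dm(g) \;\leq\; m(G) \;=\; 1.
$$
A second application of Lemma \ref{Lem : Borel}, now to $\tilde f_n$, gives $\limsup_n \frac{1}{n}\log \frac{1}{\rho_n(x_n)} \leq 0$ almost surely, equivalently $\liminf_n \frac{1}{n}\log \rho_n(x_n) \geq 0$. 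Combining the two estimates yields $\lim_n \frac{1}{n}\log \rho_n(x_n)=0$ almost surely.

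There is no real obstacle beyond this bookkeeping; the only delicate point, handled above, is to justify that $1/\rho_n(x_n)$ is $\PP$-almost surely well-defined. This matches the informal comment preceding the corollary, that triviality of the Poisson boundary of a compact group forces the asymptotic entropy to vanish, and the pointwise statement itself falls out of the Borel--Cantelli lemma.
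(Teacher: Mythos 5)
Your proof is correct and follows essentially the same route as the paper: both directions rest on the uniform bounds $\rho_n \leq C$ and $\int_\Omega \rho_n(x_n)^{-1}\,d\PP \leq m(G) = 1$, combined with Lemma \ref{Lem : Borel}. The only (harmless) difference is that for the upper bound the paper uses the pointwise inequality $\rho_n(x_n)\leq C$ directly rather than a second Borel--Cantelli argument, and your explicit checks that $\rho_n\leq C$ propagates under convolution and that $1/\rho_n(x_n)$ is a.s.\ well-defined are details the paper leaves implicit.
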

\begin{proof}
Note that for any $n$, we have $\rho_n(x_n) \leq C$. Therefore, 
\begin{equation}\label{Eq : compact 1}
\limsup \frac1n \log{\rho_n(x_n) } \leq 0.
\end{equation}
We have $m(G)=1$, therefore for any $n$
$$
\int_\Omega \frac{1}{\rho_n(x_n)} d\PP = \int_G dm(g) = 1.
$$
By Lemma~\ref{Lem : Borel}, we have 
\begin{equation}\label{Eq : compact 2}
\limsup - \frac1n \log{\rho_n(x_n) } \leq 0.
\end{equation}
Combining equations \eqref{Eq : compact 1} and \eqref{Eq : compact 2} implies the desired result.
\end{proof}

Now, let us continue the proof for general groups. The key estimate is the following.

\begin{lemma} \label{L:estimate}
Suppose that the density $\rho$ is bounded, i.e. $\rho(x) \leq C$ for some positive $C$ and for a.e. $x \in G$. If $(B,\lambda)$ is a $\mu$--boundary, then  
$$\int_\Omega \frac{dx_{n-1}\lambda}{d\lambda}(x^\lambda_\infty)  \rho_n(x_n)\ d\PP(\x) \leq C. $$
\end{lemma}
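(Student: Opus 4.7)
The plan is to compute the left-hand side explicitly by decomposing the path into $x_{n-1}$, the increment $g_n$, and the future tail (which determines a boundary point), then to invoke the density bound $\rho\le C$ and $\mu$-stationarity of $\lambda$.

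First I observe that the pair $(x_{n-1},x_n)$ has joint density $\rho_{n-1}(u)\rho(u^{-1}h)$ at $(u,h)$ with respect to $m\otimes m$, since under the change of variables $(x_{n-1},g_n)\mapsto(x_{n-1},x_{n-1}g_n)$ the Jacobian is trivial by left-invariance of $m$. Using the shift identity for the boundary map, namely $x_\infty^\lambda=x_n\cdot\mathrm{bnd}(\sigma^n\x)$ with $\mathrm{bnd}(\sigma^n\x)\sim\lambda$ independent of $(g_1,\dots,g_n)$, the conditional law of $x_\infty^\lambda$ given $(x_{n-1},x_n)=(u,h)$ is $h\lambda$. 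Equivalently, $x_\infty^\lambda=hz$ with $z\sim\lambda$ independent of $(x_{n-1},x_n)$, so the integral to be bounded becomes
\begin{equation*}
\int_{G\times G\times B}\frac{du\lambda}{d\lambda}(hz)\,\rho_n(h)\,\rho_{n-1}(u)\,\rho(u^{-1}h)\,dm(u)\,dm(h)\,d\lambda(z).
\end{equation*}

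Next I replace $\rho(u^{-1}h)$ by its pointwise upper bound $C$ and integrate in $u$ first. Because $\lambda$ is $\mu$-stationary, $\mu^{*(n-1)}*\lambda=\lambda$, which is the identity
\begin{equation*}
\int_G\frac{du\lambda}{d\lambda}(w)\,d\mu^{*(n-1)}(u)=1 \qquad \text{for $\lambda$-a.e.\ } w.
\end{equation*}
Setting $w=hz$ collapses the inner $u$-integral to $1$. A second invocation of stationarity, this time $\mu^{*n}*\lambda=\lambda$, gives
\begin{equation*}
\int_{G\times B}\rho_n(h)\,dm(h)\,d\lambda(z)=\int_B d(\mu^{*n}*\lambda)=1,
\end{equation*}
so the combined upper bound is $C$, as claimed.

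The only delicate point is clerical: pinning down the conditional law of $x_\infty^\lambda$ given $(x_{n-1},x_n)$ via the shift identity, and reordering the triple integral via two applications of Fubini's theorem. Both Fubini steps are harmless because every integrand in sight is non-negative, so no integrability issue arises. The sole analytic input is the hypothesis $\rho\le C$; everything else is formal manipulation using the defining stationarity property of a $\mu$-boundary.
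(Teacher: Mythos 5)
Your proof is correct and follows essentially the same route as the paper: both arguments express the joint law of $(x_{n-1},x_n,x_\infty^\lambda)$ (your independent-increment parametrization $x_\infty^\lambda=x_n z$ with $z\sim\lambda$ is equivalent to the paper's joint density $\rho_{n-1}(h)\rho(h^{-1}g)\tfrac{dg\lambda}{d\lambda}(\gamma)$), bound $\rho\le C$, and collapse the remaining integrals to $1$ via the stationarity identity $\int_G \tfrac{dg\lambda}{d\lambda}(w)\,d\mu^{*k}(g)=1$. The only cosmetic differences are the order of integration and that your second step is really just normalization of $\mu^{*n}\otimes\lambda$ rather than a genuine second use of stationarity.
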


\begin{proof}
Since $\lambda$ is $\mu$--stationary, for any $n$
$$
\lambda = \int_G g\lambda \ d\mu^{*n}(g) = \int_G g \lambda \ \rho_n(g) \ d m(g).
$$
Hence, by taking the Radon-Nikodym derivative with respect to $\lambda$, 
\begin{equation} \label{E:stationary}
1 = \int_G \frac{d g \lambda}{d \lambda}(\gamma) \ \rho_n(g) \ d m(g) \qquad \textup{for }\lambda \textup{-a.e. } \gamma \in B,
\end{equation}
which is the same as saying 
$$1 = \int_G \frac{d g \lambda}{d \lambda}(x^\lambda_\infty) \ \rho_n(g) \ d m(g) \qquad \textup{for }\PP \textup{-a.e. } \x \in \Omega.$$
Let us now estimate 
$$\int_\Omega \frac{dx_{n-1} \lambda}{d \lambda}(x^\lambda_\infty)  \rho_n(x_n)\ d\PP(\x).$$
The integral depends on $x_{n-1}, x_n, x_\infty$, therefore 
$$\int_\Omega \frac{dx_{n-1} \lambda}{d \lambda}(x^\lambda_\infty)  \rho_n(x_n)\ d\PP(\x)= \int_\Omega \frac{d h \lambda}{d \lambda}(\gamma)  \rho_n(g)d\PP_{x_{n-1}, x_n, x_\infty}(h,g,\gamma).$$
Note that 
$$
\frac{d\PP_{x_{n-1}, x_n, x_\infty}}{dm\times dm\times d \lambda}(h,g,\gamma)= \rho_{n-1}(h)\rho(h^{-1}g)\frac{dg \lambda}{d \lambda}(\gamma).
$$
Therefore,
$$
\int_\Omega \frac{dx_{n-1} \lambda}{d \lambda}(x^\lambda_\infty)  \rho_n(x_n)\ d\PP(\x)= $$
$$ = \int_{G\times G\times B} \frac{dh \lambda}{d \lambda}(\gamma) \rho_n(g)   \rho_{n-1}(h)\rho(h^{-1}g)\frac{dg \lambda}{d \lambda}(\gamma)\ dm(h) dm(g) d \lambda(\gamma).$$
We have $\rho \leq C$, therefore
$$
\int_\Omega \frac{dx_{n-1} \lambda}{d \lambda}(x^\lambda_\infty)  \rho_n(x_n)d\PP(\x) \leq C \int_{G\times G\times B}  \rho_{n-1}(h)  \frac{dh \lambda}{d \lambda}(\gamma) \rho_n(g)   \frac{dg \lambda}{d \lambda}(\gamma) dm(h) dm(g) d \lambda(\gamma).
$$
The right hand side is equal to
$$
\int_{B} d \lambda(\gamma) \int_{G}  dm(h) \rho_{n-1}(h)  \frac{dh \lambda}{d \lambda}(\gamma) \int_G \rho_n(g)   \frac{dg \lambda}{d \lambda}(\gamma) dm(g).
$$
Now, by equation \eqref{E:stationary} the inner integral equals $1$ for $ \lambda$-a.e. $\gamma$, and the same is true for the middle integral, hence 
the total integral is $1$ and 
$$\int_\Omega \frac{dx_{n-1} \lambda}{d \lambda}(x^\lambda_\infty)  \rho_n(x_n)\ d\PP(\x) \leq C. $$
\end{proof}

\begin{proposition}\label{P : crucial}
Let $(B,\lambda)$ be a $\mu$--boundary. If $\rho$ is bounded and $h(\lambda)< +\infty$, then 
$$
\limsup_n \frac{1}{n} \log \rho_n(x_n) \leq -h(\lambda)
$$
for $\lambda$--almost every $\gamma$ in $B$ and $\PP^{\gamma}$--almost every sample path $(x_n)$.
Equivalently,
$$
\liminf_n \left( -\frac{1}{n} \log \rho_n(x_n) \right) \geq h(\lambda)\cdot
$$
\end{proposition}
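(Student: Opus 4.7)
The plan is to combine Lemma \ref{L:estimate} with the Borel--Cantelli style bound of Lemma \ref{Lem : Borel}, and then subtract off the Furstenberg entropy contribution using Theorem \ref{theo : Furstenberg-entropy}. The key observation is that Lemma \ref{L:estimate} gives a uniform-in-$n$ integrability statement for the \emph{product}
$$
f_n(\x) := \frac{dx_{n-1}\lambda}{d\lambda}(x^\lambda_\infty)\,\rho_n(x_n),
$$
not for $\rho_n(x_n)$ alone. This is exactly the non-stationary substitute for sub-multiplicativity: the Radon--Nikodym factor plays the role of the ``missing'' $\mu^{*(n-1)}$ mass at $x_{n-1}$.

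First I would apply Lemma \ref{Lem : Borel} with $(M,\kappa)=(\Omega,\PP)$ to the positive functions $f_n$ defined above. By Lemma \ref{L:estimate}, $\int_\Omega f_n\,d\PP \leq C$ for all $n$, so
$$
\limsup_{n} \frac{1}{n}\log f_n(\x) \;\leq\; 0 \qquad \PP\textup{-a.s.},
$$
which rearranges to
$$
\limsup_{n}\frac{1}{n}\log \rho_n(x_n) \;\leq\; -\liminf_{n}\frac{1}{n}\log \frac{dx_{n-1}\lambda}{d\lambda}(x^\lambda_\infty) \qquad \PP\textup{-a.s.}
$$

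Next, since $h(\lambda)<+\infty$, Theorem \ref{theo : Furstenberg-entropy} applies and yields
$$
\lim_{n}\frac{1}{n}\log \frac{dx_n\lambda}{d\lambda}(x^\lambda_\infty)=h(\lambda) \qquad \PP\textup{-a.s.},
$$
and the same limit holds along the shifted index $n-1$ because $(n-1)/n\to 1$ and the logarithmic Radon--Nikodym factor is $L^1(\PP)$ (so the almost sure limit of $\frac{1}{n-1}\log\frac{dx_{n-1}\lambda}{d\lambda}(x_\infty^\lambda)$ is the same). Substituting into the previous display gives
$$
\limsup_{n}\frac{1}{n}\log \rho_n(x_n) \;\leq\; -h(\lambda) \qquad \PP\textup{-a.s.},
$$
which is the claimed bound. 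The version of the conclusion phrased in terms of $\lambda$-a.e.\ $\gamma\in B$ and $\PP^\gamma$-a.e.\ sample path then follows by disintegrating $\PP$ along the hitting map $\x\mapsto x^\lambda_\infty$ and using Fubini.

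The only place where one has to be a little careful is the $n-1$ versus $n$ mismatch in the indices: the Borel--Cantelli step naturally produces $dx_{n-1}\lambda/d\lambda$ (coming from the factorization of the density $\rho_{n-1}(h)\rho(h^{-1}g)$ used in Lemma \ref{L:estimate}), whereas the Furstenberg entropy theorem is stated for $dx_n\lambda/d\lambda$. I expect no serious obstacle here, as the shift invariance of the construction and the $L^1$ control on the one-step log Radon--Nikodym derivative make the two normalizations equivalent in the $\tfrac{1}{n}$-limit. Beyond this bookkeeping, the argument is a clean ``integrate, apply Markov, apply Borel--Cantelli, subtract the known limit'' in the spirit of Algoet--Cover.
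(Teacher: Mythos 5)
Your proof is correct and follows essentially the same route as the paper: apply Lemma \ref{Lem : Borel} to the functions $f_n(\x)=\frac{dx_{n-1}\lambda}{d\lambda}(x_\infty^\lambda)\rho_n(x_n)$, which are uniformly integrable by Lemma \ref{L:estimate}, and then subtract the almost sure limit from Theorem \ref{theo : Furstenberg-entropy}. Your explicit handling of the $n-1$ versus $n$ index shift is a point the paper leaves implicit, and it is handled correctly (in fact the shift is immediate from re-indexing, without needing the $L^1$ control).
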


\begin{proof}
By Lemma \ref{L:estimate} and Lemma \ref{Lem : Borel}, for $\PP$--a.e. $\x$ we have
$$\limsup_{n \to \infty} \frac{1}{n} \log \left( \frac{dx_{n-1}\lambda}{d\lambda}(x^\lambda_\infty)  \rho_n(x_n) \right) \leq 0. $$
Moreover, by  Theorem \ref{theo : Furstenberg-entropy}, 
$$\lim_n \frac1n \log\frac{dx_n\lambda}{d\lambda}(x^\lambda_\infty)=h(\lambda)$$ 
which implies the desired result.
\end{proof}

\begin{theorem}[Weak Shannon-McMillan-Breiman theorem] \label{thm : bounded density}
Let $G$ be a locally compact group and $\mu$ a probability measure on $G$ absolutely continuous w.r.t. the left Haar measure, with density $\rho$. 
Let $(\partial G,\nu)$ be the Poisson boundary of the random walk $(G,\mu)$.
If $\rho$ is bounded and for any $n$ the differential entropy $H_n$ is finite, then
almost surely
$$
\liminf_n \left( -\frac{1}{n} \log \rho_n(x_n) \right) = h(\nu).
$$
\end{theorem}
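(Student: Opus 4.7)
The approach is a two-sided squeeze: the lower bound $\liminf_n(-\frac{1}{n}\log\rho_n(x_n)) \geq h(\nu)$ comes directly from Proposition \ref{P : crucial} applied to the Poisson boundary itself, while the matching upper bound will come from Fatou's lemma combined with Derriennic's theorem. Before invoking Proposition \ref{P : crucial} we must verify that $h(\nu) < \infty$; this follows from Theorem \ref{theo : Derriennic}(1)--(2), since the standing hypothesis that every $H_n$ is finite forces $I(x_1, x_n) = H_n - H_{n-1}$ to be finite as well, and $h(\nu)$ is the infimum of these quantities. With that in hand, specialising Proposition \ref{P : crucial} to $(B,\lambda) = (\partial G,\nu)$ immediately yields the one-sided inequality.

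For the reverse inequality, set $f_n := -\frac{1}{n} \log \rho_n(x_n)$. The role of the boundedness hypothesis is to supply a uniform integrable lower bound on $f_n$: writing $\rho \leq C$, the convolution identity $\rho_{n+1}(y) = \int_G \rho_n(g)\rho(g^{-1}y)\,dm(g)$ together with $\int_G \rho_n \, dm = 1$ gives $\rho_n \leq C$ by induction, and hence $f_n \geq -|\log C|$, a constant that is integrable on the probability space $(\Omega, \PP)$. Fatou's lemma then yields
$$\int_\Omega \liminf_n f_n \ d\PP \leq \liminf_n \int_\Omega f_n \ d\PP = \liminf_n \frac{H_n}{n} = h(\nu),$$
where the final equality is Theorem \ref{theo : Derriennic}(3). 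Combined with the pointwise bound $\liminf_n f_n \geq h(\nu)$ obtained in the first step, the nonnegative random variable $\liminf_n f_n - h(\nu)$ has vanishing expectation and therefore vanishes $\PP$-almost surely, which is the claimed equality.

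I do not anticipate a serious obstacle, since the substantive machinery has already been assembled: Proposition \ref{P : crucial} encapsulates the delicate Borel--Cantelli argument using Lemma \ref{L:estimate} and the Furstenberg-entropy convergence of Theorem \ref{theo : Furstenberg-entropy}, and Derriennic's theorem supplies the $L^1$-level identity $H_n/n \to h(\nu)$. The only new ingredient contributed at this stage is the uniform bound $\rho_n \leq C$, which is precisely where the hypothesis on $\rho$ is used essentially; as the remarks following Example \ref{ex : fake} make clear, no purely algebraic substitute based on subadditivity of differential entropy is available in the continuous setting, so this hypothesis appears to be genuinely needed for the Fatou step to close the argument.
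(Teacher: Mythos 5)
Your argument is correct and follows essentially the same route as the paper: the lower bound comes from Proposition \ref{P : crucial} applied to $(\partial G,\nu)$, and the upper bound from the uniform bound $\rho_n \leq C$, Fatou's lemma, and Theorem \ref{theo : Derriennic}(3), with the two combined by noting that a nonnegative random variable of nonpositive expectation vanishes almost surely. Your explicit verification that $h(\nu)<\infty$ (needed to invoke Proposition \ref{P : crucial}) is a small point the paper leaves implicit, but it does not change the approach.
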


\begin{proof}
Let $C := \Vert \rho \Vert_\infty < + \infty$. Note that by taking convolutions one also shows $\rho_n \leq C$ for all $n$, which implies $-\log\rho_n(x_n) \geq -\log C$. Therefore, we can apply Fatou's lemma to the part 3 of Theorem~\ref{theo : Derriennic};
$$
\int_\Omega \liminf_n \left( -\frac{1}{n} \log\rho_n(x_n) \right) d\PP(\x) \leq \lim_n \int_\Omega \left( -\frac{1}{n} \log\rho_n(x_n) \right) d\PP(\x) = h(\nu). $$
On the other hand, Proposition \ref{P : crucial} implies that 
$$
\int_\Omega \liminf_n\left(- \frac{1}{n} \log \rho_n(x_n) \right) d\PP\geq h(\nu)\cdot
$$
hence equality must hold almost everywhere.
\end{proof}

Combining Proposition~\ref{P : crucial} and Theorem~\ref{liminf stopping time} yields the maximality of the Furstenberg entropy among $\mu$--boundaries, which should be compared to Theorem~\ref{thm : De ma}.
\begin{corollary}
With the same notation as in the above theorem, $h(\lambda) \leq h(\nu) $ for any $\mu$--boundary $(B,\lambda)$.
\end{corollary}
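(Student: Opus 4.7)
The plan is to combine two asymptotic controls of the quantity $-\tfrac{1}{n}\log\rho_n(x_n)$: a lower bound in terms of $h(\lambda)$ coming from Proposition~\ref{P : crucial} applied to the $\mu$-boundary $(B,\lambda)$, and the identification of the liminf with $h(\nu)$ coming from Theorem~\ref{thm : bounded density} applied to the Poisson boundary $(\partial G,\nu)$. Both results use the same sample-path quantity $\rho_n(x_n)$, which depends only on $\mu$ and not on the chosen boundary, so comparing them should directly yield $h(\lambda)\leq h(\nu)$.

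Concretely, assume first that $h(\lambda)<+\infty$. Under the hypotheses of Theorem~\ref{thm : bounded density} (which are carried into the corollary), $\rho$ is bounded, so Proposition~\ref{P : crucial} applies to the $\mu$-boundary $(B,\lambda)$ and gives, $\PP$-almost surely,
$$
\liminf_n\left(-\tfrac{1}{n}\log\rho_n(x_n)\right)\ \geq\ h(\lambda).
$$
On the other hand, Theorem~\ref{thm : bounded density} applied to the Poisson boundary identifies the same liminf, almost surely, with $h(\nu)$. Chaining the two inequalities on a common full-measure set yields $h(\lambda)\leq h(\nu)$, which is exactly what we want.

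The only subtlety is the case $h(\lambda)=+\infty$, where Proposition~\ref{P : crucial} is formally stated under the assumption of finite Furstenberg entropy. I would handle this by revisiting the proof of Proposition~\ref{P : crucial} rather than its statement: Lemma~\ref{L:estimate} and Lemma~\ref{Lem : Borel} still give $\limsup_n \tfrac{1}{n}\log\bigl(\tfrac{dx_{n-1}\lambda}{d\lambda}(x^\lambda_\infty)\,\rho_n(x_n)\bigr)\leq 0$ without any entropy hypothesis, while Theorem~\ref{theo : Furstenberg-entropy} does not require finiteness either and yields $\lim_n\tfrac{1}{n}\log\tfrac{dx_n\lambda}{d\lambda}(x^\lambda_\infty)=h(\lambda)=+\infty$. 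Subtracting forces $\liminf_n(-\tfrac{1}{n}\log\rho_n(x_n))=+\infty$, and then Theorem~\ref{thm : bounded density} forces $h(\nu)=+\infty$ as well, so the inequality $h(\lambda)\leq h(\nu)$ holds trivially.

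There is no real obstacle here; the substance of the argument was already carried out in Proposition~\ref{P : crucial} and Theorem~\ref{thm : bounded density}, and the corollary is essentially a bookkeeping observation that the lower bound supplied by an arbitrary $\mu$-boundary cannot exceed the exact value attained by the Poisson boundary. The only point requiring minor care is the reduction of the $h(\lambda)=+\infty$ case to the same sample-path estimate, as outlined above.
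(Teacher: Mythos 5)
Your argument is correct and is exactly the paper's intended derivation: the corollary is stated immediately after Theorem~\ref{thm : bounded density} precisely because combining the lower bound $\liminf_n(-\tfrac{1}{n}\log\rho_n(x_n))\geq h(\lambda)$ from Proposition~\ref{P : crucial} with the identification of that same liminf as $h(\nu)$ gives the inequality on a common full-measure set. Your extra care with the $h(\lambda)=+\infty$ case (using only the Fatou half of the theorem's proof to conclude $h(\nu)=+\infty$) is sound and slightly more explicit than what the paper records.
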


\section{Maximal entropy and the Poisson boundary}

We will now use the Furstenberg entropy and the weak Shannon theorem of the previous section to give an entropy criterion to recognize the Poisson boundary.
The starting point is the following theorem, due to Derriennic. 

\begin{theorem}[Derriennic \cite{De}, page 268] \label{thm : De ma}
Let $\mu$ be a probability measure on a locally compact group $G$, and 
let $(\partial G, \nu )$ be the Poisson boundary of the random walk $(G, \mu)$.
For any $\mu$-boundary $(B,\lambda)$, we have 
$$
h(\lambda) \leq h(\nu).
$$
Moreover, if $ h(\nu)$ is finite, then $h(\nu)=h(\lambda)$ if and only if $(B,\lambda)$ is the Poisson boundary.
\end{theorem}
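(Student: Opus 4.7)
The plan is to deduce both parts from a fibered Gibbs-type inequality, combined with Theorem \ref{theo : Furstenberg-entropy}. First I set up the geometry: since $(B,\lambda)$ is a $\mu$-boundary, there is a $G$-equivariant measurable quotient $\pi : (\partial G, \nu) \to (B, \lambda)$ with $\pi_* \nu = \lambda$. Disintegrating $\nu = \int_B \nu_b\, d\lambda(b)$ along the fibers of $\pi$ and pushing forward $g\nu = p_g\, d\nu$ (with $p_g := \frac{dg\nu}{d\nu}$) under $\pi$, one reads off
$$q_g(b) := \frac{dg\lambda}{d\lambda}(b) = \int_{\pi^{-1}(b)} p_g(\xi)\, d\nu_b(\xi),$$
so that $p_g/q_g(b)$ is a $\nu_b$-probability density on each fiber.

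For the inequality $h(\lambda) \leq h(\nu)$, I would apply the Gibbs inequality $\int \tilde p \log \tilde p\, d\nu_b \geq 0$ to $\tilde p = p_g/q_g(b)$. Rearranging gives the fiberwise bound
$$q_g(b) \log q_g(b) \leq \int_{\pi^{-1}(b)} p_g(\xi) \log p_g(\xi)\, d\nu_b(\xi),$$
and integrating against $d\lambda(b)\, d\mu(g)$ collapses (via the disintegration) to $h(\lambda) \leq h(\nu)$ in the form given by the definition of Furstenberg entropy.

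For the equality direction, one implication is trivial. For the converse, assume $h(\lambda) = h(\nu) < \infty$. Then the fiberwise Gibbs inequality above must be an equality for $(\mu \otimes \lambda)$-a.e.\ $(g,b)$, which forces $\tilde p \equiv 1$ on each fiber, i.e.\ $p_g(\xi) = q_g(\pi(\xi))$ for $\nu$-a.e.\ $\xi$ and $\mu$-a.e.\ $g$. Using the multiplicative cocycle identity $p_{gh}(\xi) = p_g(\xi)\, p_h(g^{-1}\xi)$ from the proof of Theorem \ref{theo : Furstenberg-entropy}, this factorization through $\pi$ propagates to $p_{x_n}$ for $\PP$-a.e.\ sample path.

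The remaining and most delicate step is to convert the pointwise factorization of every density through $\pi$ into an identification of $\sigma$-algebras mod $\nu$-null sets. A natural route is via the Poisson representation: every bounded $\mu$-harmonic function is $\phi_f(g) = \int f\, p_g\, d\nu$ for some $f \in L^\infty(\partial G, \nu)$, and if $p_g$ factors through $\pi$ for a dense set of $g$, an $L^1$ approximation shows every $\phi_f$ factors through $(B,\lambda)$; since the Poisson formula identifies $L^\infty(\partial G, \nu)$ with $H^\infty(G,\mu)$, this forces $\pi^{-1}\mathcal{B} = \mathcal{B}(\partial G)$ mod $\nu$, i.e.\ $(B,\lambda) = (\partial G, \nu)$. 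This is the main obstacle, as it requires the family $\{p_g\}_{g \in G}$ to separate points of the Poisson boundary; alternatively one can route the argument through Derriennic's identification $h(\nu) = I(x_1, \mathcal{A})$ in Theorem \ref{theo : Derriennic}, with $\mathcal{A}$ the tail $\sigma$-algebra, and use monotonicity of mutual information in the second argument to conclude $\mathcal{A}_\lambda = \mathcal{A}$, which by definition characterizes the Poisson boundary.
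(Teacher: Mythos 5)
The paper does not prove this theorem: it is quoted from Derriennic \cite{De} and used as a black box, so there is no internal proof to compare your argument against; I am evaluating it on its own terms. The inequality $h(\lambda)\leq h(\nu)$ is correct as you prove it. The identity $\frac{dg\lambda}{d\lambda}(b)=\int_{\pi^{-1}(b)}\frac{dg\nu}{d\nu}\,d\nu_b$ is exactly the statement that $q_g\circ\pi$ is the conditional expectation of $p_g$ on $\pi^{-1}\mathcal{B}(B)$, and your fiberwise Gibbs inequality is conditional Jensen for the convex function $t\mapsto t\log t$; since $t\log t\geq -e^{-1}$, both sides have uniformly bounded negative parts, so integrating against $d\lambda\,d\mu$ is legitimate even when $h(\nu)=+\infty$. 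This is essentially the classical route (Derriennic; Kaimanovich--Vershik in the discrete case).

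The equality case is where the gap sits. Extracting $p_g(\xi)=q_g(\pi(\xi))$ a.e.\ from the equality case of strictly convex Jensen is fine (finiteness of $h(\nu)$, hence of $h(\lambda)$, is needed to subtract, and you have it). But your two proposed ways of finishing are uneven. The mutual-information alternative does not work as stated: monotonicity of $I(x_1,\cdot)$ in the $\sigma$-algebra only reproves the inequality; to get $\mathcal{A}_\lambda=\mathcal{A}$ from $I(x_1,\mathcal{A}_\lambda)=I(x_1,\mathcal{A})<\infty$ one needs the conditional-independence argument ($x_1$ independent of $\mathcal{A}$ given $\mathcal{A}_\lambda$, propagated to $(x_1,\dots,x_n)$ by the Markov property, whence $\mathcal{A}\subseteq\mathcal{A}_\lambda$ mod $0$), which is an extra step you have not supplied. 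Your first route can be closed, but not via ``density of $g$'' or separation of points: the factorization is only available for $\mu^{*n}$-a.e.\ $g$ (via the cocycle identity), and the clean way to exploit exactly that much is the bounded martingale $\varphi_f(x_n)=E[f(x_\infty)\mid x_1,\dots,x_n]=\int f\,p_{x_n}\,d\nu$ for $f\in L^\infty(\partial G,\nu)$. Substituting $p_{x_n}(\xi)=q_{x_n}(\pi(\xi))$ gives $\varphi_f(x_n)=\varphi_{f'}(x_n)$ with $f'=E_\nu[f\mid\pi]\circ\pi$, and martingale convergence yields $f(x_\infty)=f'(x_\infty)$ $\PP$-a.s., i.e.\ $f=f'$ $\nu$-a.e.\ since $\nu$ is the law of $x_\infty$; as $f$ is arbitrary, $\pi^{-1}\mathcal{B}(B)=\mathcal{B}(\partial G)$ mod $\nu$. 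With that replacement your argument is complete.
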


Combining Derriennic's result with Theorem~\ref{thm : bounded density} yields the following criterion.

\begin{theorem}[Entropy criterion, version 1]
Let $\mu$ be a probability measure on a locally compact group $G$ with bounded density with respect to the left Haar measure, suppose that $H_n$ is finite for all $n$, 
and let $(B, \lambda)$ be a $\mu$-boundary. Then the following statements are equivalent:
\begin{enumerate}
\item the $\mu$--boundary $(B,\lambda)$ is the Poisson boundary.
\item $h(\nu)=h(\lambda)$.
\item for  $\PP$--almost every sample path $(x_n)$ one has 
$$\liminf_n -\frac{1}{n} \log \left( \rho_n(x_n) \frac{dx_n\lambda}{d\lambda}(x_\infty^\lambda)\right) = 0.$$
\end{enumerate}
\end{theorem}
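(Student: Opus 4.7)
The plan is to first note that the hypotheses of the theorem make Derriennic's Theorem \ref{thm : De ma} directly applicable, which handles the equivalence (1) $\Leftrightarrow$ (2), and then to obtain (2) $\Leftrightarrow$ (3) by splitting the logarithm in (3) and recognizing the two pieces as the quantities controlled by Theorems \ref{thm : bounded density} and \ref{theo : Furstenberg-entropy}.

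First I would verify that $h(\nu)$ is finite under the hypotheses, so that Theorem \ref{thm : De ma} can be invoked. Since every $H_n$ is finite, part (2) of Theorem \ref{theo : Derriennic} gives that the mutual information $I(x_1,x_n) = H_n - H_{n-1}$ is finite for all $n \geq 2$; then part (1) of the same theorem shows that the sequence $I(x_1,x_n)$ decreases to $h(\nu)$, which is therefore finite (being bounded between $0$ and $I(x_1,x_2)$). Consequently Derriennic's theorem directly yields that (1) and (2) are equivalent, and moreover that $h(\lambda) \leq h(\nu) < +\infty$ for every $\mu$-boundary $(B,\lambda)$.

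Next, for (2) $\Leftrightarrow$ (3), I would decompose
\begin{equation*}
-\frac{1}{n}\log\!\left(\rho_n(x_n)\, \frac{dx_n\lambda}{d\lambda}(x_\infty^\lambda)\right)
= \left(-\frac{1}{n}\log \rho_n(x_n)\right) \;+\; \left(-\frac{1}{n}\log \frac{dx_n\lambda}{d\lambda}(x_\infty^\lambda)\right).
\end{equation*}
By the weak Shannon-McMillan-Breiman theorem (Theorem \ref{thm : bounded density}), the first summand satisfies
\(
\liminf_n \bigl(-\tfrac{1}{n}\log \rho_n(x_n)\bigr) = h(\nu)
\)
almost surely, while by Theorem \ref{theo : Furstenberg-entropy} the second summand converges almost surely to $-h(\lambda)$ (a finite number, by the remark above). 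Since adding a convergent sequence to another preserves $\liminf$ (i.e.\ $\liminf(a_n+b_n)=\liminf a_n + \lim b_n$ when $\lim b_n$ exists and is finite), we obtain almost surely
\begin{equation*}
\liminf_n -\frac{1}{n}\log\!\left(\rho_n(x_n)\, \frac{dx_n\lambda}{d\lambda}(x_\infty^\lambda)\right) = h(\nu) - h(\lambda).
\end{equation*}
Since $h(\lambda) \leq h(\nu)$ and both are finite, this non-negative quantity vanishes almost surely precisely when $h(\lambda)=h(\nu)$, which is exactly (2). This gives (3) $\Leftrightarrow$ (2) and completes the cycle.

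There is no serious obstacle here: the result is really a repackaging of three previously established facts (Derriennic's maximality theorem, the weak SMB theorem, and the pointwise Furstenberg-entropy convergence), the only point to be checked carefully being the finiteness of $h(\nu)$ (needed to invoke Derriennic's characterization) and the fact that the convergent summand makes $\liminf$ additive, so no subtle inequality is lost when splitting the logarithm.
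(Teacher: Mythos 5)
Your proposal is correct and follows essentially the same route as the paper, which simply states that the criterion is obtained by combining Derriennic's maximality theorem (Theorem \ref{thm : De ma}) with the weak Shannon theorem (Theorem \ref{thm : bounded density}) and the pointwise convergence of Theorem \ref{theo : Furstenberg-entropy}; indeed, the same decomposition of the logarithm with $\liminf$-additivity appears verbatim in the paper's appendix proof of Theorem \ref{thm : general An sets}. Your explicit check that $h(\nu)<+\infty$ (via Theorem \ref{theo : Derriennic}) is a detail the paper leaves implicit, and it is exactly the right point to verify before invoking Derriennic's characterization.
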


In order to apply this criterion to the concrete situations, one needs to formulate it in terms of the existence of certain sets $A_{n, \epsilon}$ which ``trap" the random walk. 

\begin{theorem}[Entropy criterion, version 2] \label{thm : general An sets}
Let $\mu$ be a probability measure on a locally compact group $G$ with bounded density with respect to the left Haar measure, suppose that $H_n$ is finite for all $n$, and let $(B, \lambda)$ be a $\mu$-boundary. 
If there exists a subset $W$ of $B$ such that $\lambda(W)>0$ and for $\lambda$-almost every $\gamma$ in $W$ and any $\epsilon>0$ there exist 
measurable sets $A_{n,\epsilon}^\gamma \subset G$ with finite Haar measure such that 
\begin{enumerate}
\item $\limsup_n\PP^\gamma \{\x : x_n\in A^\gamma_{n,\epsilon} \}> 0 $ 
\item $\limsup_n\frac{1}{n} \log m(A^\gamma_{n,\epsilon})< \epsilon$,
\end{enumerate}
then $h(\nu)=h(\lambda)$; therefore, $(B,\lambda)$ is the Poisson boundary.
\end{theorem}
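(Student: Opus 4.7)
The plan is to show $h(\lambda) \geq h(\nu)$; combined with Derriennic's Theorem~\ref{thm : De ma}, which supplies the reverse inequality, this forces $h(\lambda) = h(\nu)$, and since under our hypotheses $h(\nu) = \inf_n I(x_1, x_n)$ is finite (by Theorem~\ref{theo : Derriennic}, because $H_1, H_2$ finite yields $I(x_1,x_2) = H_2 - H_1$ finite), the same theorem then identifies $(B,\lambda)$ as the Poisson boundary. I argue by contradiction: suppose $h(\nu) > h(\lambda)$, fix $\epsilon > 0$ satisfying the hypothesis together with $2\epsilon < h(\nu) - h(\lambda)$, and set $\alpha := h(\lambda) - h(\nu) + \epsilon$, so that $\alpha < -\epsilon$ and $\alpha + \epsilon < 0$.

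The first step is to identify the joint law of $(x_n, x_\infty^\lambda)$. By the Markov property, conditionally on $x_n = g$ the boundary point $x_\infty^\lambda$ has distribution $g\lambda$, so the joint density of $(x_n, x_\infty^\lambda)$ with respect to $m \otimes \lambda$ is
$$f_n(g,\gamma) := \rho_n(g)\,\frac{dg\lambda}{d\lambda}(\gamma),$$
and the conditional law of $x_n$ given $x_\infty^\lambda = \gamma$ is $\PP^\gamma(x_n \in A) = \int_A f_n(g,\gamma)\, dm(g)$. Combining the weak Shannon--McMillan--Breiman theorem (Theorem~\ref{thm : bounded density}), which gives $\limsup_n \frac{1}{n}\log \rho_n(x_n) = -h(\nu)$, with the Furstenberg entropy theorem (Theorem~\ref{theo : Furstenberg-entropy}), which gives $\lim_n \frac{1}{n}\log \frac{dx_n\lambda}{d\lambda}(x_\infty^\lambda) = h(\lambda)$, yields $\PP$-a.s.
$$\limsup_n \frac{1}{n}\log f_n(x_n, x_\infty^\lambda) = h(\lambda) - h(\nu) < \alpha.$$
By Fubini, for $\lambda$-a.e. $\gamma$ the analogous $\PP^\gamma$-a.s.\ bound holds with $x_\infty^\lambda$ replaced by $\gamma$.

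To reach the contradiction, fix any $\gamma \in W$ for which this fiberwise bound and both hypotheses (1), (2) are in force. Dominated convergence applied to the $\PP^\gamma$-a.s.\ statement gives
$$\PP^\gamma\bigl(f_n(x_n,\gamma) > e^{n\alpha}\bigr) \to 0,$$
while the conditional-density formula together with hypothesis (2) gives
$$\PP^\gamma\bigl(x_n \in A_{n,\epsilon}^\gamma,\ f_n(x_n,\gamma) \leq e^{n\alpha}\bigr) = \int_{A_{n,\epsilon}^\gamma \cap \{f_n(\cdot,\gamma) \leq e^{n\alpha}\}} f_n(g,\gamma)\, dm(g) \leq e^{n\alpha}\, m(A_{n,\epsilon}^\gamma) \leq e^{n(\alpha+\epsilon)}$$
eventually, which tends to $0$ since $\alpha + \epsilon < 0$. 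Summing the two estimates forces $\PP^\gamma(x_n \in A_{n,\epsilon}^\gamma) \to 0$ along the full sequence, contradicting the positive-limsup assertion of hypothesis (1).

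The main delicate point I anticipate is the fiberwise-to-almost-sure transfer in the middle step: it is essential to pass from the $\PP$-a.s.\ bound on $\frac{1}{n}\log f_n(x_n, x_\infty^\lambda)$ to a $\PP^\gamma$-a.s.\ bound at each good $\gamma$ via Fubini, because working at the level of the integrated probability $\PP$ and then applying reverse Fatou in $\gamma$ goes the wrong way and produces only subsequential information, which would be insufficient to contradict the fiberwise limsup in hypothesis (1). Once the fiberwise a.s.\ bound is in hand, the volume estimate from hypothesis (2) plugs in directly and the contradiction is automatic.
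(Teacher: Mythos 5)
Your proof is correct and follows essentially the same route as the paper's: both arguments hinge on combining the weak Shannon--McMillan--Breiman theorem with Theorem~\ref{theo : Furstenberg-entropy} to control the joint density $\rho_n(g)\tfrac{dg\lambda}{d\lambda}(\gamma)$ along the walk, splitting $A_{n,\epsilon}^\gamma$ by a density threshold and using the volume hypothesis to show the low-density part contributes nothing. The only difference is cosmetic: the paper runs the estimate directly to get $h(\nu)-h(\lambda)\leq 4\epsilon$ for every $\epsilon$, whereas you frame it as a contradiction with a threshold $e^{n\alpha}$ calibrated to the putative gap.
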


The proof is very similar to the discrete case, the only subtle point being that in Theorem \ref{thm : bounded density} we only have convergence for the liminf rather than the limit as in the original proof, which, however, still works under this assumption.  For the sake of completeness, it is written in the Appendix.

\section{Gauge and finite moment}

In order to apply the entropy criteria to geometric situations, we need to define distances on the group. A slightly more general object than a norm 
on a group is a \emph{gauge}, which we will discuss in this section. 

\begin{definition}
Given a group $G$, a \emph{gauge} on $G$ is a sequence $\g=(\g_n)_{n \geq 1}$ of subsets of $G$ such that $\g_n \subset \g_{n+1}$ for any $n$, and $\bigcup_n \g_n = G$.
\end{definition}

Given a gauge $\g$ and $g$ in $G$, we define  $|g|_\g:=\min\{ n: g \in \g_n\}$.
A gauge $\g$ is \emph{sub-additive} if 
$$|gh|_\g \leq |g|_\g + |h|_\g$$ 
for any $g, h \in G.$
Moreover, a family of gauges $\g^k=(\g^k_n)$ is called \emph{uniformly temperate} if 
$$
\sup_{k,n} \frac{1}{n} \log m(\g^k_n) < \infty.
$$

The most typical example of a gauge arises when the group $G$ acts by isometries on a metric space $(X, d)$. Then one fixes a base point $o \in X$ and one sets for any $n \geq 0$ 
$$\g_n := \{ g \in G \ : \ d(x, gx) \leq n \}.$$
This gauge is sub-additive by the triangle inequality.

The following two lemmas are analogous to the ones for discrete groups due to Derriennic \cite{Derriennic}, so we omit their proofs. 

\begin{lemma}\label{lem : finite moment}
Let $\g$ be a gauge on a group $G$ equipped with a probability measure $\mu$.  If $\int_G \log |g|_\g \ d\mu(g)$ is finite and we denote $B_n := \g_n \setminus \g_{n-1}$, then 
$$
-\sum_{n=1}^\infty \mu(B_n) \log \mu(B_n)
$$
is finite.
\end{lemma}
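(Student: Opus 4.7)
The plan is to split the series $-\sum_{n\geq 1} \mu(B_n) \log \mu(B_n)$ according to whether $\mu(B_n)$ is large or small compared to a polynomial in $n$. I would set $N_1 := \{n : \mu(B_n) \geq n^{-3}\}$ and $N_2 := \{n : \mu(B_n) < n^{-3}\}$, and estimate the two resulting subsums separately. This is the standard device for converting a finite logarithmic moment into a finite entropy statement; the only freedom is in the choice of exponent.

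On $N_1$, the defining inequality gives $-\log\mu(B_n) \leq 3\log n$, so $-\mu(B_n)\log\mu(B_n) \leq 3\mu(B_n)\log n$. Since $|g|_\g = n$ on $B_n$ (with $\log 1 = 0$ handling $n=1$ harmlessly), the corresponding subsum is bounded above by
$$3\sum_{n=1}^\infty \mu(B_n)\log n \;=\; 3\int_G \log|g|_\g\, d\mu(g),$$
which is finite by hypothesis.

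On $N_2$, I would invoke the elementary inequality $-x\log x \leq 2\sqrt{x}$ on $(0,1]$; this follows from $-\log x \leq 2x^{-1/2}$, itself equivalent to $\log y \leq y$ applied to $y = 1/\sqrt{x}$. Then each term satisfies $-\mu(B_n)\log\mu(B_n) \leq 2\sqrt{\mu(B_n)} < 2 n^{-3/2}$, and $\sum n^{-3/2}$ converges. Adding the two pieces yields the claim.

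The main obstacle, such as it is, lies only in calibrating the threshold exponent: it must be strictly greater than $2$ so that the $\sqrt{\cdot}$ bound on $N_2$ is summable, while any choice works on $N_1$ because the logarithmic-moment hypothesis absorbs an arbitrary multiplicative constant. Taking $n^{-3}$ comfortably separates the two regimes, and the rest is bookkeeping.
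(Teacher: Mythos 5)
Your proof is correct. The paper itself omits the argument, deferring to Derriennic's discrete-group version, and your decomposition into $\{n:\mu(B_n)\geq n^{-3}\}$ (where $-\log\mu(B_n)\leq 3\log n$ feeds into the finite logarithmic moment) and its complement (where $-x\log x\leq 2\sqrt{x}$ gives a summable tail $2n^{-3/2}$) is exactly that standard argument; the calibration of the exponent and the handling of $n=1$ are both fine.
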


\begin{lemma}
Let $\g$ be a temperate gauge on a locally compact group $G$, and let $\mu$ be a probability measure on $G$ such that $\rho = \frac{d \mu}{dm}$ is bounded. If $\int_G  |g|_\g \ d\mu(g)$ is finite, then $H_1$ is finite.
\end{lemma}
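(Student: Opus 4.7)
The goal is to show that $H_1 = \int_G (-\log \rho) \, d\mu$ is finite. The boundedness $\rho \leq C$ immediately gives $-\log \rho \geq -\log C$ pointwise, so $H_1 \geq -\log C > -\infty$ automatically; only the upper bound $H_1 < +\infty$ requires work.

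The plan is to compare $\rho$ to an explicit probability density built from the gauge, and then invoke Gibbs' inequality (non-negativity of the Kullback--Leibler divergence). By the temperate-gauge hypothesis there is a constant $C_1$ with $m(\g_n) \leq C_1 e^{C_1 n}$, hence $m(B_n) \leq C_1 e^{C_1 n}$ for the annuli $B_n := \g_n \setminus \g_{n-1}$. Fixing any $\alpha > C_1$, the normalizer
$$Z := \int_G e^{-\alpha |g|_\g} \, dm(g) = \sum_{n \geq 1} m(B_n) \, e^{-\alpha n} \leq C_1 \sum_{n \geq 1} e^{(C_1 - \alpha) n}$$
is finite. Set $\sigma(g) := Z^{-1} e^{-\alpha |g|_\g}$, a strictly positive probability density on $G$. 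A direct computation then gives
$$-\int_G \rho(g) \log \sigma(g) \, dm(g) = \log Z + \alpha \int_G |g|_\g \, d\mu(g),$$
which is finite by the hypothesis of finite first moment.

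Next I would apply the pointwise inequality $x \log x - x + 1 \geq 0$ at $x = \rho/\sigma$, which yields $\rho \log(\rho/\sigma) + \sigma - \rho \geq 0$ a.e. on $G$. Integrating this non-negative function and using $\int \rho \, dm = \int \sigma \, dm = 1$ gives the Gibbs inequality $\int \rho \log \rho \, dm \geq \int \rho \log \sigma \, dm$, so that
$$H_1 = -\int_G \rho \log \rho \, dm \leq \log Z + \alpha \int_G |g|_\g \, d\mu < +\infty,$$
as desired.

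The main subtlety is justifying the Gibbs rearrangement without a priori knowing that $H_1$ is finite (recall from Example \ref{ex : fake} that differential entropy can behave unexpectedly, and is not automatically controlled by sub-additivity). The whole point of choosing $\sigma$ of the prescribed exponential form is that $-\log \sigma = \log Z + \alpha |g|_\g$ is bounded below, which forces $\int \rho \log \sigma \, dm$ to be an honest real number; this allows the non-negative integral of $\rho \log(\rho/\sigma) + \sigma - \rho$ to be split legitimately as $\int \rho \log \rho \, dm - \int \rho \log \sigma \, dm \geq 0$ in the extended sense, giving the Gibbs bound.
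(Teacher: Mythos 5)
Your argument is correct. Note that the paper does not actually write out a proof of this lemma: it states that the two lemmas in that section are ``analogous to the ones for discrete groups due to Derriennic'' and omits the proofs. The Derriennic-style argument one would transplant here decomposes $G$ into the annuli $B_n := \g_n \setminus \g_{n-1}$, writes $H_1$ as the entropy of the distribution $(\mu(B_n))_n$ plus the $\mu(B_n)$-weighted conditional differential entropies on each annulus, bounds the latter by $\log m(B_n) \leq \log C_1 + C_1 n$ (maximum entropy on a set of finite measure), and controls $-\sum_n \mu(B_n)\log\mu(B_n)$ via the preceding lemma on finite logarithmic moment. Your route is genuinely different and arguably cleaner: by comparing $\rho$ against the explicit exponential reference density $\sigma = Z^{-1}e^{-\alpha|\cdot|_\g}$ and invoking Gibbs' inequality, you get the quantitative bound $H_1 \leq \log Z + \alpha\int|g|_\g\,d\mu$ in one stroke, without needing the companion lemma on $-\sum\mu(B_n)\log\mu(B_n)$. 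You also handle the two integrability subtleties correctly: boundedness of $\rho$ gives $\rho\log\rho \leq \rho\log C$, so the positive part of $\rho\log\rho$ is integrable and $H_1 > -\infty$; and since $-\log\sigma$ is bounded below and $\rho\log(\rho/\sigma) \geq \rho - \sigma$ pointwise with $\rho-\sigma \in L^1$, the negative part of $\rho\log(\rho/\sigma)$ is integrable, so the splitting $\int\rho\log(\rho/\sigma) = \int\rho\log\rho - \int\rho\log\sigma$ is legitimate in the extended sense and the Gibbs inequality can be applied without assuming a priori that $H_1$ is finite. Both approaches use the temperateness and the finite first moment in essentially the same place (to control the exponential volume growth against the moment condition), so neither buys extra generality here, but yours is self-contained.
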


If additionally $\g$ is sub-additive, then $\int_G |g|_\g \ d \mu^{*n}(g) \leq n \int_G |g|_\g \ d \mu(g)$, hence: 

\begin{corollary}\label{cor : Hn finite}
Let $\g$ be a temperate and sub-additive gauge on a locally compact group $G$, and let $\mu = \rho \ dm$ be a probability measure on $G$
with bounded density. If $\int_G  |g|_\g \ d\mu(g)$ is finite, then $H_n$ is finite for all $n$.
\end{corollary}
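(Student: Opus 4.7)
The strategy is to apply the preceding unnamed lemma (which asserts that $H_1$ is finite whenever $\g$ is temperate, the density is bounded, and the first moment is finite) with $\mu$ replaced by its $n$-fold convolution $\mu^{*n}$. Since $H_n = H(\mu^{*n})$ by definition, it suffices to verify that $\mu^{*n}$ itself satisfies all three hypotheses of that lemma relative to the same gauge $\g$.

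The temperateness of $\g$ is part of the hypothesis and carries over unchanged. For the bounded-density condition, I would use the identity $\rho_n = \rho * \rho_{n-1}$ (convolution with respect to the left Haar measure) and the left-invariance of $m$ to estimate
\begin{equation*}
\rho_n(g) \;=\; \int_G \rho(h)\,\rho_{n-1}(h^{-1}g)\,dm(h) \;\leq\; C\int_G \rho_{n-1}(h^{-1}g)\,dm(h) \;=\; C,
\end{equation*}
using $\rho \leq C$ and the fact that $\rho_{n-1}$ is a probability density. Iterating (or arguing by induction on $n$) gives $\rho_n \leq C$ for every $n$, which is the same remark already used in the proof of Theorem \ref{thm : bounded density}.

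For the finite first moment of $|\cdot|_\g$ under $\mu^{*n}$, I would invoke the sub-additivity hypothesis on $\g$. Writing $x_n = g_1 g_2 \cdots g_n$ with i.i.d.\ increments $g_i$ of law $\mu$, sub-additivity gives $|x_n|_\g \leq \sum_{i=1}^n |g_i|_\g$, so
\begin{equation*}
\int_G |g|_\g \, d\mu^{*n}(g) \;=\; \mathbb{E}\bigl[|x_n|_\g\bigr] \;\leq\; \sum_{i=1}^n \mathbb{E}\bigl[|g_i|_\g\bigr] \;=\; n \int_G |g|_\g\, d\mu(g) \;<\; +\infty,
\end{equation*}
which is precisely the inequality highlighted just before the corollary.

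Having verified all three hypotheses of the preceding lemma for $\mu^{*n}$, one concludes that $H(\mu^{*n}) = H_n$ is finite for every $n$. There is essentially no obstacle here; the corollary is a direct packaging of the previous lemma together with the sub-additivity estimate. The only point worth double-checking is the boundedness of $\rho_n$, which, as noted above, was already recorded in the course of proving the weak Shannon--McMillan--Breiman theorem and follows from a one-line computation with the left-invariance of $m$.
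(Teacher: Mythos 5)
Your proof is correct and is essentially the paper's own argument: the paper proves the corollary by exactly this reduction, noting that sub-additivity gives $\int_G |g|_\g \, d\mu^{*n}(g) \leq n \int_G |g|_\g \, d\mu(g)$ and then applying the preceding lemma to $\mu^{*n}$, with $\rho_n \leq C$ recorded separately in the proof of the weak Shannon theorem. One small caution on your bounded-density step: the equality $\int_G \rho_{n-1}(h^{-1}g)\, dm(h) = 1$ is not valid for non-unimodular $G$ (the substitution $h \mapsto h^{-1}g$ picks up the modular function, and several of the paper's applications, e.g.\ affine groups, are non-unimodular); instead bound the convolution the other way, $\rho_n(g) = \int_G \rho(h)\,\rho_{n-1}(h^{-1}g)\, dm(h) \leq \Vert \rho_{n-1}\Vert_\infty \int_G \rho(h)\, dm(h) = \Vert \rho_{n-1}\Vert_\infty$, which gives $\rho_n \leq C$ by induction.
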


A very typical way to construct temperate gauges on a group is to take as ``unit ball" a compact subset which generates the group. 

\begin{lemma} 
Let $K \subseteq G$ be a  compact set with non-empty interior, and such that $K$ generates $G$ as a semigroup. Then there exists a natural number $\ell$ such that for any $n$
$$
m(K^{n+1}) \leq \ell m(K^n).
$$
\end{lemma}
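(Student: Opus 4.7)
The plan is to reduce the statement to a single combinatorial covering. Suppose I can find $h_1,\dots,h_\ell \in G$ with
$$K^2 \subseteq \bigcup_{i=1}^\ell h_i K.$$
Then for every $n \geq 1$, writing $K^{n+1} = K^2 \cdot K^{n-1}$ gives
$$K^{n+1} \subseteq \bigcup_{i=1}^\ell h_i K \cdot K^{n-1} = \bigcup_{i=1}^\ell h_i K^n,$$
and the left-invariance of the Haar measure yields $m(K^{n+1}) \leq \sum_{i=1}^\ell m(h_i K^n) = \ell \cdot m(K^n)$, as required. So everything reduces to producing such a covering of $K^2$.

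For this I would exploit the non-empty interior of $K$. Pick any $g_0 \in K$ together with an open neighborhood $W$ of the identity such that $g_0 W \subseteq K$; equivalently $W \subseteq g_0^{-1} K$. Since $e \in W$, the family $\{k W : k \in K^2\}$ is an open cover of the compact set $K^2$, and compactness extracts a finite subcover $K^2 \subseteq \bigcup_{i=1}^\ell k_i W$ with $k_1,\dots,k_\ell \in K^2$. Substituting $W \subseteq g_0^{-1} K$ into each term gives
$$K^2 \subseteq \bigcup_{i=1}^\ell k_i g_0^{-1} K,$$
which is exactly the covering needed, with $h_i := k_i g_0^{-1}$.

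I do not anticipate a genuine obstacle here; the only mild point to watch is that $K$ is neither assumed to contain the identity nor to be symmetric, which is precisely why the auxiliary open neighborhood $W$ is inserted, to bridge between \emph{covering by open sets that automatically contain the base point} (immediate from compactness) and \emph{covering by left translates of $K$} (what the measure estimate needs). The semigroup-generation hypothesis on $K$ plays no role in this lemma itself; it is used earlier in the paper to guarantee that the associated gauge $|g|_{\g}$ is finite on all of $G$.
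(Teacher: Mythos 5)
Your proof is correct and is essentially the same as the paper's: the paper also covers the compact set $K^2$ by finitely many left translates $y_i x^{-1} U$ of an open set $U \subseteq K$ (your $W = x^{-1}U$, $h_i = y_i x^{-1}$) and then propagates the cover to $K^{n+1} = K^2 K^{n-1}$ using left-invariance of the Haar measure. Your closing remarks (the need for $g_0$ to be an \emph{interior} point, and the irrelevance of the semigroup-generation hypothesis to this particular lemma) are both accurate.
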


\begin{proof}
Let $x$ be in $K$ and  $U$ be an open neighborhood of $x$ such that $U \subset K$, then 
$$
K^2 \subset \bigcup_{y\in K^2} y x^{-1} U.
$$
Since $K^2$ is compact, there exists a finite set 
$\{y_1,y_2,\ldots, y_{\ell}\} \subset K^2$ such that 
$$
K^2 \subset \bigcup_{i=1}^\ell y_i x^{-1} U \subset \bigcup_{i=1}^\ell y_i x^{-1} K\Rightarrow m(K^2) \leq \ell m(K).
$$
Therefore, $K^{n+1}\subset \left(\bigcup_{i=1}^\ell y_i x^{-1}K \right)K^{n-1}=\bigcup_{i=1}^\ell y_i x^{-1} K^n$, and consequently, 
$m(K^{n+1}) \leq \ell m(K^n)$.
\end{proof}

\begin{corollary} \label{L:temperate}
Let $K \subseteq G$ be a compact set with non-empty interior and such that $K$ generates $G$ as a semigroup.  Then  $\g := (\g_n)$
with $\g_n:= K^n$ is a temperate gauge. 
\end{corollary}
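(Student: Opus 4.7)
The plan is to combine the two defining properties of a gauge with the geometric growth bound established in the previous lemma. The proof is essentially a bookkeeping exercise, since all the real work has been done by that lemma.

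First I would verify that $(\g_n) = (K^n)$ satisfies the two axioms of a gauge. The equality $\bigcup_n K^n = G$ is immediate from the hypothesis that $K$ generates $G$ as a semigroup. The nesting $K^n \subseteq K^{n+1}$ is automatic whenever the identity $e$ lies in $K$; if it does not, one may simply replace $K$ by the compact set $K \cup \{e\}$, which still has non-empty interior and still generates $G$ as a semigroup, and whose powers differ from the original ones only by a multiplicative constant in measure. I would flag this harmless modification at the start of the argument so as not to burden the rest of the proof with a case split.

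Next, the previous lemma supplies a constant $\ell$ such that $m(K^{n+1}) \leq \ell\, m(K^n)$ for every $n \geq 1$. Iterating this recursion from the base case $n = 1$ yields
$$
m(K^n) \leq \ell^{n-1}\, m(K),
$$
so that
$$
\frac{1}{n} \log m(\g_n) \;\leq\; \log \ell + \frac{\log m(K) - \log \ell}{n}.
$$
Since $m(K) < \infty$ (because $K$ is compact and $m$ is a Radon measure), the right-hand side is bounded uniformly in $n$, which is exactly the temperate condition $\sup_n \frac{1}{n} \log m(\g_n) < \infty$.

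No serious obstacle remains. The only subtle point is the nesting of $K^n$, a purely set-theoretic issue that is resolved by adjoining the identity to $K$ if needed; the rest of the argument is an immediate consequence of the multiplicative growth bound from the previous lemma.
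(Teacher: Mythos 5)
Your proof is correct and takes the same route the paper intends: the corollary is left as an immediate consequence of the preceding lemma, obtained exactly by iterating $m(K^{n+1}) \leq \ell\, m(K^n)$ to get $m(K^n) \leq \ell^{n-1} m(K)$ and noting that $\frac{1}{n}\log m(K^n)$ is then bounded. Your extra remark about adjoining the identity to $K$ to guarantee the nesting $K^n \subseteq K^{n+1}$ is a reasonable piece of bookkeeping that the paper glosses over.
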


\begin{lemma} \label{L:autX}
Let $X$ be a locally bounded, infinite graph, let $G$ be a closed group of automorphisms of $X$, and let $o$ a vertex of $X$. Then 
the gauge $\g := (\g_n)$ with
$$\mathcal{G}_n := \{ g \in G  :   d(o, go) \leq n \}$$ 
is temperate.
\end{lemma}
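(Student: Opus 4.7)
The plan is to exploit the observation that the stabilizer $K := \{g \in G : go = o\}$ of the base vertex is a compact open subgroup of $G$, so that every level $\g_n$ of the gauge decomposes as a finite union of cosets of $K$ whose number is controlled by the cardinality of the ball $B(o,n)$ in $X$.

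First, I will verify that $K$ is a compact open subgroup of $G$. Openness is immediate: in the topology of pointwise convergence on $\mathrm{Aut}(X)$, the condition $g o = o$ is both open and closed (the complement $\{g : g o \neq o\}$ is a union of basic open sets $\{g : g o = v\}$ over $v \neq o$), so $K$ is open in $G$. For compactness, observe that a graph automorphism fixing $o$ preserves every ball $B(o,n)$, which is finite by local boundedness; hence $\mathrm{Stab}_{\mathrm{Aut}(X)}(o)$ embeds continuously as a closed subgroup of the compact product $\prod_n \mathrm{Sym}(B(o,n))$ (the closedness encoding the compatibility of restrictions with the graph structure). Since $G$ is closed in $\mathrm{Aut}(X)$, its intersection $K$ with this stabilizer is also compact, and in particular $0 < m(K) < +\infty$.

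Second, two elements $g_1, g_2 \in G$ satisfy $g_1 o = g_2 o$ if and only if $g_1^{-1} g_2 \in K$. Choosing for each $v \in G \cdot o \cap B(o,n)$ a representative $g_v \in G$ with $g_v o = v$ yields the disjoint decomposition
$$\g_n = \bigsqcup_{v \in G \cdot o \cap B(o,n)} g_v K,$$
and by left-invariance of the Haar measure,
$$m(\g_n) = |G \cdot o \cap B(o,n)| \cdot m(K) \leq |B(o,n)| \cdot m(K).$$

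Finally, local boundedness of $X$ provides a uniform upper bound $D$ on the vertex valence, so $B(o,n)$ contains at most $1 + D + D(D-1) + \cdots + D(D-1)^{n-1} \leq D^{n+1}$ vertices. Combining, $m(\g_n) \leq m(K) \cdot D^{n+1}$, which is of the form $C e^{Cn}$ for an appropriate constant $C > 0$, proving that $\g$ is temperate. The only step requiring any care is the verification that $K$ is compact, which rests on local finiteness of the balls $B(o,n)$ together with the standard embedding of the automorphism group into a product of permutation groups of finite sets; all remaining steps are straightforward combinatorics.
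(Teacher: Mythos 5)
Your proof is correct and follows essentially the same route as the paper: both arguments rest on the compactness of the vertex stabilizer $K$ and on covering $\g_n$ by cosets $g_v K$ indexed by the vertices $v$ in the ball $B(o,n)$, whose cardinality grows at most exponentially by the bounded-valence hypothesis. The only differences are cosmetic: you prove compactness of $K$ directly via the embedding into $\prod_n \mathrm{Sym}(B(o,n))$ where the paper cites it from the literature, and you note that the coset cover is actually a disjoint decomposition, which sharpens the paper's inequality to an equality but is not needed for temperateness.
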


\begin{proof}
First of all, the stabilizer $G_0 := \{ g \in G \ : \ go = o \}$ is compact (\cite{Woess-trees}, Lemma 1), so $m(G_0) < \infty$.
Moreover, let $x$ be a vertex of $X$, and pick some $g_x \in X$ such that $go = x$, if it exists. 
Then 
$$\{ g \in G \ : \ d(o, go) \leq R \} \subseteq \bigcup_{d(o,x)\leq R} g_x G_0$$
hence, since the number of vertices at distance $\leq R$ from $o$ is at most $C^R$,
$$m(\{ g \in G \ : \ d(o, go) \leq R \}) \leq \# \{x \ : \ d(o,x)\leq R \}m(G_0) \leq m(G_0) C^R.$$
\end{proof}

\section{Stopping time trick}\label{sec : Stopping time trick}
The aim of this section is to overcome the bounded density conditions of the previous sections. We will show that for a given random walk, we can 
always construct a random walk with bounded density with respect to the Haar measure whose Poisson boundary is the same as the original one. 
The new random walk will be constructed using stopping times.

\begin{definition}
Let $G$ be a locally compact second countable group with the left Haar measure $m$. A Borel probability measure $\mu$ on $G$ is called \emph{spread--out} if there exists $n \geq 1$ such that $\mu^{*n}$ is not singular with respect to $m$. 
\end{definition}

First, let us define some useful notation. For any finite Borel measure $\zeta$ and any bounded measurable function $f$ define the operator $P^\zeta$ as 
$$P^\zeta f (x) := \int_G f(xy) \ d\zeta (y).$$
The following are immediate properties of this operator for any two finite measures $\zeta_1$ and $\zeta_2$ and for any measurable $f$:

\begin{itemize}
\item $P^{\zeta_1+\zeta_2} f=P^{\zeta_1}f +P^{\zeta_2} f$,
\item $P^{\zeta_1} P^{\zeta_2} f= P^{\zeta_1 * \zeta_2} f$.
\end{itemize}
\begin{definition}
Let $f$ be a bounded measurable function.
We say $f$ is $\mu$--harmonic if $P^\mu f = f$. 
\end{definition}
Note that when $\mu$ is spread-out, any bounded $\mu$--harmonic function is also continuous, see \cite{Babillot}.

\begin{lemma}
Let $G$ be a second countable locally compact group, and let $\mu$ be a probability measure on $G$. 
Let $\mu=\alpha+\beta$, where $\alpha$ and $\beta$ are two non-zero sub-probability measures on $G$.  Then
$$
\theta :=\sum_{n=0}^\infty \beta^{*n}*\alpha
$$
and $\mu$ have the same Poisson boundary.
\end{lemma}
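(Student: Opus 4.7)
The plan is to prove the equivalence of Poisson boundaries by showing that $\mu$ and $\theta$ determine the same space of bounded harmonic functions, since the Poisson boundary is characterized (up to $G$-isomorphism) as the $G$-space $(B,\lambda)$ whose Poisson transform identifies $L^\infty(B,\lambda)$ with $H^\infty(G,\cdot)$.

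First I would verify that $\theta$ is genuinely a probability measure: since $\alpha$ is non-zero, $\|\beta\|=1-\|\alpha\|<1$, so the series converges in total variation and $\|\theta\|=\|\alpha\|\sum_{n=0}^\infty\|\beta\|^n=\|\alpha\|/(1-\|\beta\|)=1$. The crucial algebraic fact I would establish next is the recursive identity
\begin{equation*}
\theta=\alpha+\beta*\theta,
\end{equation*}
which follows directly by shifting the summation index in $\beta*\theta=\sum_{n\geq 0}\beta^{*(n+1)}*\alpha=\theta-\alpha$. Using the operator properties listed just before the lemma, this translates to $P^\theta=P^\alpha+P^\beta P^\theta$ on bounded measurable functions.

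For the forward inclusion $H^\infty(G,\mu)\subseteq H^\infty(G,\theta)$, I would take a bounded $\mu$-harmonic $f$, so that $P^\alpha f=(I-P^\beta)f$, and then compute
\begin{equation*}
P^\theta f=\sum_{n=0}^\infty P^{\beta^{*n}}P^\alpha f=\sum_{n=0}^\infty\bigl(P^{\beta^{*n}}-P^{\beta^{*(n+1)}}\bigr)f.
\end{equation*}
This telescopes to $f-\lim_{N\to\infty}P^{\beta^{*N}}f$, and since $\|P^{\beta^{*N}}f\|_\infty\leq\|f\|_\infty\|\beta\|^N\to 0$, I conclude $P^\theta f=f$. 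Monotone/dominated convergence justifies exchanging the sum with the integral at the start, using $\|f\|_\infty<\infty$ and $\sum\|\beta^{*n}*\alpha\|<\infty$. The reverse inclusion is even shorter: if $P^\theta f=f$, then applying $P^\theta=P^\alpha+P^\beta P^\theta$ to $f$ yields $f=P^\alpha f+P^\beta f=P^\mu f$.

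The main obstacle will be citing the correct form of the identification \emph{Poisson boundary $\leftrightarrow$ $H^\infty(G,\cdot)$} in the locally compact setting: since $\mu$ and $\theta$ may fail to be spread-out a priori, one cannot appeal to continuity of bounded harmonic functions to invoke the Choquet–Deny-type representation directly. I would handle this by working at the level of the tail boundary, noting that bounded $\mu$- and $\theta$-harmonic functions coincide as $G$-modules under the Poisson transform with $L^\infty$ of the respective Poisson boundaries, so the $G$-equivariant isomorphism of these $L^\infty$-spaces gives a measure-theoretic isomorphism of the Poisson boundaries themselves. All other steps are formal manipulations of operators and convolutions.
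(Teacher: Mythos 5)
Your proof is correct and follows essentially the same route as the paper: both arguments reduce the statement to the equality of the spaces of bounded harmonic functions via the operator identity $P^{\theta}=P^{\alpha}+P^{\beta}P^{\theta}$ (equivalently $\theta=(\delta_e-\beta)^{-1}*\alpha$), applied in both directions. You merely supply the convergence details (that $\|\beta\|<1$ makes the telescoping sum $\sum_n (P^{\beta^{*n}}-P^{\beta^{*(n+1)}})f$ converge to $f$) that the paper's one-line chain of equivalences leaves implicit.
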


\begin{proof}
Note that $\theta= (\delta_e-\beta)^{-1} * \alpha$. Let $f$ be a bounded $\mu$--harmonic function. Then $P^\mu f=f$ and consequently 
$$
P^{\alpha+ \beta}f=P^\alpha f + P^\beta f = f \Leftrightarrow P^\alpha f = P^{ (\delta_e-\beta)} f \Leftrightarrow  P^{ (\delta_e-\beta)^{-1}}  P^\alpha f= P^{\theta} f = f.
$$

\end{proof}

Note that the probability measure $\theta$ above can be obtained by a \emph{stopping time}. More precisely, a function $\tau : \Omega \to \mathbb{N}^+$ is called a \emph{stopping time} if for any $n$ the set $\tau^{-1}\{n\}$ is measurable with respect to the $\sigma$-algebra generated by $x_1,\cdots,x_n$.  For any measurable set $A$,  define  
$$
\mu_\tau(A):=\PP(\{\x : x_{\tau(\x)} \in A\}).
$$
If the stopping time $\tau$ is almost surely finite, then $\mu_\tau$ is a probability measure. Moreover, for any $n$, $(\mu_\tau)^{*n} = \mu_{\tau_n}$
where $\tau_n$ is the iterated stopping time defined as $\tau_1 :=\tau$ and 
 $$
 \tau_{n+1} :=\tau_n+\tau\circ U^{\tau_n}. 
 $$
In particular, if $L \subseteq G$ is such that $\mu(L) > 0$, then one defines the stopping time 
$$\tau(\x) := \min \left \{ n > 0 : g_n \in L \right \}.$$
Then $\mu = \alpha + \beta$, where $\alpha = \mu_{\mid_L}$ and $\beta = \mu_{\mid_L^c}$, 
and  $\mu_\tau=\theta=\sum_{n=0} \beta^{*n}*\alpha$. 
The probability measure $\theta$ was introduced by Willis \cite{Willis}; transformations of random walks via stopping times are due to the first author and Kaimanovich, who show these transformations preserve the Poisson boundaries for random walks on discrete groups (for more details see \cite{Behrang-thesis} and \cite{Forghani}).

The next lemma compares the Furstenberg entropies of a $\mu$-boundary $(B, \lambda)$ with respect to the measure $\mu$ and 
the induced measure $\theta$. We denote by $h_\theta(\lambda)$ the Furstenberg entropy with respect to the probability measure $\theta$.

\begin{lemma} \label{L:finite-entro}
Let $(B,\lambda)$ be a $\mu$--boundary, and let $\theta$ be the induced measure as above, with corresponding stopping time $\tau$. Then 
$$
h_\theta(\lambda) = E(\tau) h (\lambda)
$$
and $E(\tau) =  \frac{1}{\| \alpha \|}.$
\end{lemma}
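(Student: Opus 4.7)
The plan is to handle the two claims separately and then combine them. For $E(\tau) = 1/\|\alpha\|$, observe that since $\alpha = \mu|_L$, the events $\{g_i \in L\}$ are i.i.d.\ Bernoulli trials of success probability $\mu(L) = \|\alpha\|$; hence $\tau$ has the geometric law $\PP(\tau = k) = (1-\|\alpha\|)^{k-1}\|\alpha\|$ for $k \geq 1$, from which $E(\tau) = 1/\|\alpha\|$ is immediate.

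For the entropy identity, the strategy is to realize the $\theta$-walk as the subordination of the $\mu$-walk at the iterated stopping times $\tau_n$, and then read off the entropy from Theorem~\ref{theo : Furstenberg-entropy} applied to both walks. First I would verify that $\lambda$ is $\theta$-stationary: using $\mu * \lambda = \lambda$, one has $\alpha * \lambda = \lambda - \beta * \lambda$, and telescoping
\begin{equation*}
\theta * \lambda \;=\; \sum_{n=0}^{\infty} \beta^{*n} * \alpha * \lambda \;=\; \sum_{n=0}^{\infty} \beta^{*n}*\lambda \;-\; \sum_{n=1}^{\infty} \beta^{*n}*\lambda \;=\; \lambda,
\end{equation*}
where the telescoping is justified because $\|\beta^{*n} * \lambda\| \leq \|\beta\|^n \to 0$. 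Hence $(B,\lambda)$ is a $\theta$-boundary, which is also consistent with the preceding lemma asserting that $\mu$ and $\theta$ share the same bounded harmonic functions.

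Next, set $y_n := x_{\tau_n}$: the successive increments $y_{n-1}^{-1} y_n$ are i.i.d.\ with law $\theta$, so $(y_n)$ is the $\theta$-random walk on $G$. Moreover, the differences $\tau_{n+1} - \tau_n$ are i.i.d.\ copies of $\tau$, so by the strong law of large numbers $\tau_n/n \to E(\tau)$ almost surely, and $\tau_n \to \infty$. Since $\x \mapsto x_\infty^\lambda$ is tail-measurable and the $\theta$-path is a subsequence of the $\mu$-path, we have $y_\infty^\lambda = x_\infty^\lambda$ almost surely. Applying Theorem~\ref{theo : Furstenberg-entropy} to $(G,\mu)$ and passing to the subsequence $m = \tau_n$ gives $\tfrac{1}{\tau_n}\log \tfrac{dx_{\tau_n}\lambda}{d\lambda}(x_\infty^\lambda) \to h(\lambda)$ a.s., while applying the same theorem to $(G,\theta)$ yields $\tfrac{1}{n}\log \tfrac{dy_n\lambda}{d\lambda}(y_\infty^\lambda) \to h_\theta(\lambda)$ a.s. Factoring
\begin{equation*}
\frac{1}{n}\log \frac{dy_n \lambda}{d\lambda}(y_\infty^\lambda) \;=\; \frac{\tau_n}{n} \cdot \frac{1}{\tau_n}\log \frac{dx_{\tau_n}\lambda}{d\lambda}(x_\infty^\lambda)
\end{equation*}
and taking the limit yields $h_\theta(\lambda) = E(\tau)\, h(\lambda)$.

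The only delicate point is bookkeeping in the case $h(\lambda) = +\infty$, but Theorem~\ref{theo : Furstenberg-entropy} is asserted to hold with a possibly infinite value, so the multiplicative identity extends to the extended nonnegative reals. Apart from this, no serious obstacle is anticipated; the essence is simply that the Birkhoff-type limit provided by Theorem~\ref{theo : Furstenberg-entropy} is compatible with the deterministic rescaling of time by the a.s.\ factor $E(\tau)$ induced by subordination.
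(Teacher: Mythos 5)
Your proposal is correct and follows essentially the same route as the paper: both apply Theorem~\ref{theo : Furstenberg-entropy} to the $\mu$-walk and to the subordinated $\theta$-walk $(x_{\tau_n})$, use that the latter is a subsequence of the former with the same boundary point, and factor $\tfrac{1}{n} = \tfrac{\tau_n}{n}\cdot\tfrac{1}{\tau_n}$ together with $\tau_n/n \to E(\tau) = 1/\|\alpha\|$. Your extra verifications (the geometric law of $\tau$ and the telescoping check that $\theta * \lambda = \lambda$) just fill in details the paper leaves implicit.
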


\begin{proof}
Since $\theta$ and $\mu$ have the same Poisson boundary, then $(B,\lambda)$ is also a $\theta$--boundary. Now, Theorem~\ref{theo : Furstenberg-entropy} implies that 
$$
\lim_n\frac{1}{n} \log \frac{dx_n \lambda}{d \lambda} (x^\lambda_\infty)= h(\lambda), \hspace{1cm}  \lim_n\frac{1}{n} \log \frac{dx_{\tau_n} \lambda}{d \lambda} (x^\lambda_\infty)= h_\theta(\lambda)
$$
for $\PP$--almost every sample path $(x_n)$, where we remark that $(x_{\tau_n})$ is a subsequence of $(x_n)$.  Note that $\lim_n \frac{\tau_n}{n} = \int \tau \ d\PP= \|\alpha\|^{-1}$. Therefore, 
$$
 h_\theta(\lambda)=\lim_n\frac{\tau_n}{n} \frac{1}{\tau_n} \log \frac{dx_{\tau_n} \lambda}{d \lambda} (x^\lambda_\infty)= \frac{1}{\|\alpha\|} h(\lambda).
$$
\end{proof}

\begin{lemma} \label{thm : finite moment tau}
Let $\g$ be a sub-additive gauge on the locally compact group $G$ and $|g|:=|g|_\g$.  If $L(\mu):= \int_G |g| \ d\mu(g)$ is finite, then for any stopping time $\tau$ with finite expectation $E(\tau)$, we have
$$
L(\theta) \leq E(\tau) L(\mu).
$$
 \end{lemma}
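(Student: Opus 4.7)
The plan is to unfold the definition of $L(\theta)$ as an expectation along the stopped random walk and then use the sub-additivity of the gauge to reduce to Wald's identity.

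First I would rewrite
$$L(\theta) = \int_G |g|\ d\theta(g) = \int_G |g|\ d\mu_\tau(g) = \int_\Omega |x_{\tau(\x)}|\ d\PP(\x),$$
using the defining property $\mu_\tau(A) = \PP(x_{\tau} \in A)$. Next, since $x_{\tau} = g_1 g_2 \cdots g_\tau$ and $\g$ is sub-additive, one has the pointwise estimate
$$|x_{\tau(\x)}| \leq \sum_{k=1}^{\tau(\x)} |g_k|$$
on the sample-path space $\Omega$.

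The remaining step is an application of Wald's identity to the sum $\sum_{k=1}^{\tau}|g_k|$: the increments $|g_k|$ are i.i.d.\ with common mean $L(\mu) < \infty$, $\tau$ is a stopping time with respect to the natural filtration generated by $(g_1,g_2,\ldots)$, and $E(\tau) < \infty$. Under these hypotheses Wald's identity gives
$$E\left(\sum_{k=1}^{\tau} |g_k|\right) = E(\tau)\, E(|g_1|) = E(\tau)\, L(\mu).$$
Combining this with the previous inequality yields $L(\theta) \leq E(\tau) L(\mu)$, as desired.

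The only point requiring a bit of care is the applicability of Wald's identity; since here $|g_k| \geq 0$, even the monotone-convergence version applied to $\sum_{k=1}^{\infty} |g_k| \mathbf{1}_{\{\tau \geq k\}}$ together with the independence of $\{\tau \geq k\}$ from $g_k$ (because $\{\tau \geq k\} = \{\tau \leq k-1\}^c$ is determined by $g_1, \ldots, g_{k-1}$) suffices to give the equality, so no additional integrability assumption is needed beyond $L(\mu) < \infty$ and $E(\tau) < \infty$. I do not foresee any serious obstacle beyond checking that the stopping time $\tau$ in the construction of $\theta$ is indeed adapted to the filtration generated by the increments, which is immediate from the definition given before the lemma.
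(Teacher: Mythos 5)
Your proof is correct and is essentially the argument the paper has in mind: the paper simply defers to the discrete-group case in \cite{Forghani}, where the proof is exactly this combination of the sub-additivity bound $|x_{\tau}| \leq \sum_{k=1}^{\tau}|g_k|$ with Wald's identity for the i.i.d.\ increments $|g_k|$. Your remark on why Wald's identity applies (nonnegativity plus the measurability of $\{\tau \geq k\}$ with respect to $g_1,\dots,g_{k-1}$) correctly handles the only point needing care.
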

 
 \begin{proof}
 The proof is exactly the same as the one for discrete groups, see \cite{Forghani}. 
 \end{proof}

\begin{remark} \label{R:fake}
Note that when $G$ is a discrete group and $\mu$ has finite entropy, then $H(\mu_\tau) \leq E(\tau) H_1$, see \cite{Forghani}. However, in the case of locally compact groups, the same statement does not hold for differential entropies.  For instance, let $2\alpha$ be the uniform probability measure on $(0,1/2)$, 
as in Example~\ref{ex : fake}; then the differential entropy of $\mu$ with respect to the Haar measure is zero, but the differential entropy of
$\theta=\sum_{n=0}\beta^{*n}*\alpha$ with respect to the Haar measure is positive.
\end{remark}

The following theorem is mentioned in \cite{Kaimanovich-Zactions}. 

\begin{theorem}\label{thm : spread-out measure trick}
Let $\mu$ be a spread--out probability measure on $G$. Then there exists a probability measure $\theta$ on $G$ such that 
\begin{enumerate}
\item $\theta$ is absolutely continuous with respect to $m$, and $\frac{d \theta}{dm}$ is bounded; 
\item the Poisson boundaries of $(G,\mu)$ and $(G,\theta)$ are the same.
\end{enumerate}
Moreover, if $\mu$ has finite first moment with respect to a sub-additive, temperate gauge $\g$, then $\theta$ has also finite moment, and $H(\theta^{*n})$ is finite for any $n$. 
\end{theorem}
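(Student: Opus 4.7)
The plan is to invoke the decomposition lemma preceding the theorem, applied not to $\mu$ directly (which may be singular) but to a convolution power $\mu^{*N}$, using the spread-out hypothesis to extract an absolutely continuous sub-probability component.

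First, since $\mu$ is spread-out, there exists $N \geq 1$ such that $\mu^{*N}$ is not singular with respect to $m$; let $\rho^{*}$ denote the density of its absolutely continuous part. I would choose $M > 0$ and a compact $K \subseteq G$ so that $h := \min(\rho^{*}, M) \cdot \mathbf{1}_K$ has positive integral $p := \int h \, dm > 0$. Set $\alpha := h \cdot m$, an absolutely continuous sub-probability measure with density $\leq M$ and $\alpha \leq \mu^{*N}$, and $\beta := \mu^{*N} - \alpha$, a sub-probability of total mass $1-p$. Define
$$\theta := \sum_{n=0}^{\infty} \beta^{*n} * \alpha.$$
Since $\alpha$ is absolutely continuous, so is each $\beta^{*n} * \alpha$, with density bounded pointwise by $M\|\beta\|^n = M(1-p)^n$; summing the geometric series shows $\theta \ll m$ with density at most $M/p$, and the same computation for total mass gives $\|\theta\| = p \sum_n (1-p)^n = 1$, so $\theta$ is a probability measure.

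For the boundary identification, I would apply the preceding lemma to the decomposition $\mu^{*N} = \alpha + \beta$, which yields that $\theta$ and $\mu^{*N}$ have the same space of bounded harmonic functions, hence coinciding Poisson boundaries. To return from $\mu^{*N}$ to $\mu$, I would realize $\theta$ directly as the law of $x_\tau$ for a randomized a.s.\ finite stopping time $\tau$ adapted to the $\mu$-walk: with auxiliary IID uniform variables $U_n$ independent of the walk, stop at the first $N$-block index $n$ such that $U_n \leq h(h_n)/\rho^{*}(h_n)$, where $h_n := g_{(n-1)N+1} \cdots g_{nN}$ (taking the ratio to be $0$ off the AC support). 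A direct computation shows that the distribution of $x_{N\tau}$ equals $\sum_{n \geq 0} \beta^{*n} * \alpha = \theta$. The general principle that an a.s.\ finite (randomized) stopping time preserves the Poisson boundary, via optional stopping in one direction and the tail $\sigma$-algebra realization in the other, then transfers the identification from $\mu^{*N}$ back to $\mu$.

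For the last statement, assume $\mu$ has finite first moment with respect to a sub-additive temperate gauge $\g$. Sub-additivity gives $L(\mu^{*N}) \leq N L(\mu) < \infty$, and since $\theta$ arises from $\mu^{*N}$ via a stopping time of expectation $1/p$ (Lemma \ref{L:finite-entro}), Lemma \ref{thm : finite moment tau} yields $L(\theta) \leq p^{-1} L(\mu^{*N}) \leq N p^{-1} L(\mu) < \infty$. Applying Corollary \ref{cor : Hn finite} to $\theta$, which is bounded-density and has finite first moment with respect to the same temperate sub-additive gauge, then gives $H(\theta^{*n}) < \infty$ for every $n$. The main obstacle I anticipate is the precise justification of the third step, namely that the Poisson boundary is preserved when passing from $\mu^{*N}$ back to $\mu$ via the randomized stopping-time realization; this is routine in the discrete setting (cf.\ \cite{Forghani}), but requires some care in the locally compact case, where the spread-out hypothesis plays a role in ensuring the relevant harmonic functions are sufficiently regular.
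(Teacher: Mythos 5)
Your proposal is correct and follows essentially the same route as the paper: extract an absolutely continuous, bounded-density sub-probability $\alpha$ from a non-singular convolution power $\mu^{*N}$, set $\theta=\sum_{i\geq 0}\beta^{*i}*\alpha$, bound $\frac{d\theta}{dm}$ by a geometric series, identify the boundaries via the preceding harmonic-function lemma, and derive the moment and entropy claims from Lemma \ref{thm : finite moment tau} and Corollary \ref{cor : Hn finite}. The only real deviations are that the paper takes $\alpha=\mu_1|_{\{d\mu_1/dm\leq C\}}$ (which is singular to $\beta$, so the associated stopping time needs no auxiliary randomization), whereas your truncation $\min(\rho^*,M)\mathbf{1}_K$ forces a randomized stopping time, and that you make explicit the reduction from $\mu^{*N}$ back to $\mu$ that the paper states in one line.
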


\begin{proof}
Suppose $\mu^{*n}$ is not singular with respect to the left Haar measure $m$. By the Lebesgue decomposition theorem,  we can write $\mu^{*n}=\mu_1+\mu_2$ such that $\mu_1$ is absolutely continuous with respect to $m$ and $\mu_2$ is singular with respect to $m$. Let choose $C>0$ such that  $\mu^{*n} (L)>0$, where
$$
L=\left\{ g \in G :  \frac{d \mu_1}{dm} (g) \leq C\right\}.
$$
For any measurable set $K$, define $\alpha(K):= \mu_1(K \cap L)$ and $\beta= (\mu_1-\alpha)+\mu_2$. Therefore, we have
$$
\mu^{*n}= \alpha +\beta.
$$
It is enough to define $\theta:= \sum_{i=0}^\infty \beta^{*i}*\alpha$. Note that the Poisson boundary of $\theta$ is the same as the one of $\mu^{*n}$, therefore, $\theta$ and $\mu$ have the same Poisson boundary. On the other hand, for any natural number $i$,  we have $\beta^{*i}*\alpha$ is absolutely continuous with respect to $m$ and 
$$
\left\| \frac{d ( \beta^{*i}*\alpha )}{d m} \right\|_\infty \leq C |  \beta |^i \Rightarrow \left\| \frac{d \theta}{ dm} \right\|_\infty \leq C.
$$
The last claims are a direct consequence of Corollary~\ref{cor : Hn finite} and Lemma \ref{thm : finite moment tau}.
\end{proof}

\subsubsection{The weak Shannon theorem for spread--out measures}

Combining Theorem~\ref{thm : bounded density}, Corollary~\ref{cor : Hn finite}, and Theorem~\ref{thm : spread-out measure trick} allows us to remove the 
bounded density condition.

\begin{theorem}
Let $G$ be a second countable  locally compact group, and let $\g$ be a temperate and sub-additive gauge. If  $\mu$ is spread-out with respect to the left Haar measure on $G$ and $\int |g|_\g \ d \mu$ is finite, then for $\PP$-almost every sample path $\x=(x_n)$
$$
\liminf_n - \frac1n \log \rho_n'(x_{\tau_n})=  E(\tau) h(\nu)
$$ 
where $\rho_n'$ is the density of $\theta^{*n}$, and $E(\tau)$ is the expectation of the stopping time $\tau$. 
\end{theorem}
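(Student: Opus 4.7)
The plan is to reduce to the bounded-density case (Theorem~\ref{thm : bounded density}) via the stopping-time trick developed in this section, and then convert the Furstenberg entropy for $\theta$ back to the one for $\mu$ using Lemma~\ref{L:finite-entro}.

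First, I would apply Theorem~\ref{thm : spread-out measure trick} to the spread-out probability measure $\mu$: this yields a probability measure $\theta$ on $G$, constructed via a stopping time $\tau$ as $\theta = \sum_{i \geq 0} \beta^{*i} * \alpha = \mu_\tau$, satisfying
(i) $\theta$ is absolutely continuous with respect to $m$ with bounded density $\rho' := d\theta/dm$,
(ii) $(G,\theta)$ and $(G,\mu)$ share the same Poisson boundary $(\partial G, \nu)$,
and (iii) $\theta$ has finite first moment with respect to the sub-additive temperate gauge $\g$, so that by Corollary~\ref{cor : Hn finite} every differential entropy $H(\theta^{*n})$ is finite.

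Next, I would apply the weak Shannon-McMillan-Breiman theorem (Theorem~\ref{thm : bounded density}) directly to the random walk $(G, \theta)$. Its hypotheses have just been verified, so almost surely along a sample path $(y_n)$ of the $\theta$-walk we obtain
$$
\liminf_n \left( -\frac{1}{n} \log \rho_n'(y_n) \right) = h_\theta(\nu),
$$
where $h_\theta(\nu)$ is the Furstenberg entropy of $\nu$ with respect to $\theta$ (which makes sense because, by (ii), $\nu$ is the Poisson measure for both walks).

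The third step is to identify the $\theta$-walk with the subsampled $\mu$-walk $(x_{\tau_n})$. By the very construction of $\theta$ via the stopping time $\tau$, one has $\theta^{*n} = \mu_{\tau_n}$ for all $n$, so the distribution of $x_{\tau_n}$ under $\PP$ coincides with the $n$-step distribution of a random walk driven by $\theta$. Thus the almost sure statement above transfers verbatim to
$$
\liminf_n \left( -\frac{1}{n} \log \rho_n'(x_{\tau_n}) \right) = h_\theta(\nu)
$$
for $\PP$-almost every sample path $\x = (x_n)$.

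Finally, I would invoke Lemma~\ref{L:finite-entro} to rewrite $h_\theta(\nu) = E(\tau) h(\nu)$ (where $E(\tau) = 1/\|\alpha\|$), which yields exactly the claimed identity. The only genuinely delicate point is the identification in the third step of the distribution of $x_{\tau_n}$ with $\theta^{*n}$ and the corresponding matching of the two almost sure statements on the path space; but this is built into the construction of $\theta$ as the stopped measure $\mu_\tau$, and no further argument is needed beyond the iterated stopping-time formula $(\mu_\tau)^{*n} = \mu_{\tau_n}$ recorded earlier in the section.
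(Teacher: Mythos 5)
Your proposal is correct and follows exactly the route the paper intends: the paper gives no written proof of this theorem, stating only that it follows by combining Theorem~\ref{thm : bounded density}, Corollary~\ref{cor : Hn finite}, and Theorem~\ref{thm : spread-out measure trick}, and your argument fills in precisely those steps, together with the identification $h_\theta(\nu)=E(\tau)h(\nu)$ from Lemma~\ref{L:finite-entro} and the observation that $(x_{\tau_n})$ realizes the $\theta$-walk via $(\mu_\tau)^{*n}=\mu_{\tau_n}$. No discrepancy with the paper's approach.
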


\subsection{Entropy criterion and stopping times}\label{liminf stopping time}

\begin{lemma} \label{L:entro-stop}
Let $\mu$ be a probability measure on $G$, and let $(B, \lambda)$ be a $\mu$-boundary. 
Suppose that there exists a set $W \subseteq B$ such that for any $\gamma$ in $W$ and 
any $\epsilon > 0$ there exist measurable sets $A_{n, \epsilon}^\gamma \subseteq G$ with finite Haar measure such that
\begin{enumerate}
\item
$\liminf_n \PP^\gamma \left( x_n \in A_{n, \epsilon}^\gamma \right) > 0$
\item
$\limsup_n \frac{1}{n} \log m(A_{n, \epsilon}^\gamma) < \epsilon.$
\end{enumerate}
Let $\tau : \Omega \to \mathbb{R}$ a stopping time with finite expectation. 
Then there exist sets $\widetilde{A}_{n, \epsilon}^\gamma \subseteq G$ such that for any $\gamma \in W$
\begin{enumerate}
\item
$\liminf_n \PP^\gamma\left( x_{\tau_n} \in \widetilde{A}_{n, \epsilon}^\gamma \right) > 0$
\item
$\limsup_n \frac{1}{n} \log m(\widetilde{A}_{n, \epsilon}^\gamma) < \epsilon$.
\end{enumerate}
\end{lemma}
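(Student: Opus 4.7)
The plan is to define a window of integer times $I_n$ of linear width around $E n$, where $E := E(\tau) < \infty$, and to take $\widetilde{A}_{n,\epsilon}^\gamma$ as the union of the original sets $A_{k,\epsilon'}^\gamma$ over $k \in I_n$ for a suitably small $\epsilon' > 0$.

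First I would note that $\tau_n = \sum_{k=1}^{n} \tau \circ U^{\tau_{k-1}}$ is an i.i.d.\ sum of copies of $\tau$, so by the strong law of large numbers $\tau_n/n \to E$ almost surely under $\PP$; disintegrating $\PP = \int_B \PP^\gamma\, d\lambda$, the same convergence holds $\PP^\gamma$-almost surely for $\lambda$-a.e.\ $\gamma$, and after removing a null subset of $W$ we may assume it for every $\gamma \in W$. Given $\epsilon > 0$, I choose $\delta, \epsilon' > 0$ with $(E+\delta)\epsilon' < \epsilon$, apply the hypothesis at parameter $\epsilon'$ to obtain sets $A_{k,\epsilon'}^\gamma$, and set
$$I_n := \mathbb{Z} \cap \bigl[(E-\delta)n,\,(E+\delta)n\bigr], \qquad \widetilde{A}_{n,\epsilon}^\gamma := \bigcup_{k \in I_n} A_{k,\epsilon'}^\gamma.$$
The SLLN then yields $\PP^\gamma(\tau_n \in I_n) \to 1$ for each $\gamma \in W$.

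Property (2) is a direct volume estimate: for $k$ large enough one has $m(A_{k,\epsilon'}^\gamma) \leq e^{\epsilon' k}$, so $m(\widetilde{A}_{n,\epsilon}^\gamma) \leq |I_n|\cdot e^{\epsilon'(E+\delta)n + o(n)}$ and hence $\limsup_n \frac{1}{n}\log m(\widetilde{A}_{n,\epsilon}^\gamma) \leq \epsilon'(E+\delta) < \epsilon$.

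Property (1) follows from the inclusion
$$\{x_{\tau_n} \in \widetilde{A}_{n,\epsilon}^\gamma\} \supseteq \{\tau_n \in I_n,\ x_{\tau_n} \in A_{\tau_n,\epsilon'}^\gamma\},$$
valid because on $\{\tau_n = k \in I_n\}$ one has $x_{\tau_n} = x_k$ and $A_{k,\epsilon'}^\gamma \subseteq \widetilde{A}_{n,\epsilon}^\gamma$. Since $\PP^\gamma(\tau_n \in I_n) \to 1$, this reduces the claim to proving $\liminf_n \PP^\gamma(x_{\tau_n} \in A_{\tau_n,\epsilon'}^\gamma) > 0$. I would decompose this as $\sum_{k \in I_n} \PP^\gamma(\tau_n = k,\ x_k \in A_{k,\epsilon'}^\gamma)$ and compare with the unconditional bound $\PP^\gamma(x_k \in A_{k,\epsilon'}^\gamma) \geq p/2$ (valid for all large $k$, where $p>0$ is the hypothesis liminf). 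I expect this last comparison to be the main obstacle, since $\{\tau_n = k\}$ and $\{x_k \in A_{k,\epsilon'}^\gamma\}$ are both $\sigma(g_1,\ldots,g_k)$-measurable and hence correlated; the strategy is to exploit the SLLN concentration of $\tau_n$ around $En$ within the wider window $I_n$ to decouple these joint probabilities and recover a positive lower bound.
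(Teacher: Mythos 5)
Your construction is essentially the paper's: take $\widetilde{A}_{n,\epsilon}^\gamma$ to be a union of the original sets over a deterministic range of indices that contains $\tau_n$ with probability tending to $1$ (the paper uses the one-sided range $\{1,\dots,n(\ell+1)\}$ obtained by applying Egorov to $\tau_n/n\to\ell$; your two-sided window around $En$ works just as well), after which conclusion (2) is the routine volume count --- and your calibration $\epsilon'(E+\delta)<\epsilon$ is in fact done more carefully than in the paper, whose stated bound is only $\epsilon(\ell+1)$.

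The problem is conclusion (1). You correctly reduce it to showing $\liminf_n \PP^\gamma\left(x_{\tau_n}\in A_{\tau_n,\epsilon'}^\gamma\right)>0$, and there you stop, saying you ``expect this to be the main obstacle'' and that you would ``exploit the SLLN concentration \dots to decouple.'' That is not a proof, and the proposed decoupling does not work as stated: $\PP^\gamma(\tau_n\in I_n)\to 1$ controls only the marginal law of $\tau_n$ and gives no lower bound on $\sum_{k\in I_n}\PP^\gamma\left(\tau_n=k,\ x_k\in A_{k,\epsilon'}^\gamma\right)$, since --- as you yourself observe --- both events are $\sigma(g_1,\dots,g_k)$-measurable and nothing in the hypotheses rules out strong negative correlation (each $k$ equals $\tau_n(\x)$ for at most one $n$ on a given path, so a priori $\{x_k\in A_{k,\epsilon'}^\gamma\}$ could concentrate on paths for which $k$ is not a renewal time). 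So the central step of (1) is missing. For what it is worth, the paper's own proof passes over exactly this point: it writes $\PP^\gamma(x_{\tau_n}\in\widetilde A_{n,\epsilon}^\gamma)\geq \PP^\gamma(x_{\tau_n}\in A_{\tau_n,\epsilon}^\gamma)-\delta/2$ and then asserts the liminf is at least $\delta/2$, which presupposes $\liminf_n\PP^\gamma(x_{\tau_n}\in A_{\tau_n,\epsilon}^\gamma)\geq\delta$ without argument; so you have located, but not resolved, the one genuinely delicate step. Note that the step becomes unproblematic if the hypothesis is upgraded to almost-sure membership $x_k\in A_{k,\epsilon'}^\gamma$ for all large $k$ (as in the situation of Corollary \ref{cor:ray-crit}), because then $\PP^\gamma(x_{\tau_n}\in A_{\tau_n,\epsilon'}^\gamma)\geq \PP^\gamma(\forall k\geq K:\ x_k\in A_{k,\epsilon'}^\gamma)-\PP^\gamma(\tau_n<K)$; with only a liminf of probabilities, however, an actual argument is required here and your proposal does not supply one.
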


\begin{proof}
Let $\ell := \mathbb{E}(\tau)$ be the expectation of the stopping time. Then by the ergodic theorem $\lim_{n \to \infty} \frac{\tau_n(\x)}{n} = \ell$ 
for almost every $\x \in \Omega$. 
Let us fix $\gamma \in W$, and denote $\delta := \liminf_n \PP^\gamma \left( x_n \in A_{n, \epsilon}^\gamma \right) > 0$.
By Egorov's theorem, there exists a set $B \subseteq \Omega$ such that $\mathbb{P}^\gamma(B) \geq 1 - \frac{\delta}{2}$
and such that $\frac{\tau_n}{n} \to \ell$ uniformly on $B$. In particular, there exists $n_0$ such that for any $n \geq n_0$ 
and any $\x \in B$ we have 
$\tau_n(\x) \leq n( \ell + 1)$. Now, let us define 
$$\widetilde{A}_{n, \epsilon}^\gamma := \bigcup_{i = 1}^{n(\ell + 1)} A_{i, \epsilon}^\gamma.$$
Then we have 
$$\PP^\gamma\left( x_{\tau_n} \in \widetilde{A}_{n, \epsilon}^\gamma\right) \geq \PP^\gamma \left( x_{\tau_n} \in A_{\tau_n, \epsilon}^\gamma \textup{ and }\tau_n \leq n (\ell + 1) \right) \geq
 \PP^\gamma \left( x_{\tau_n} \in A_{\tau_n, \epsilon}^\gamma \right) - \frac{\delta}{2}$$
hence 
$$\liminf_n \PP^\gamma \left( x_{\tau_n} \in \widetilde{A}_{n, \epsilon}^\gamma \right) \geq \frac{\delta}{2}$$
which verifies (1). On the other hand, by (2) 
$$m(\widetilde{A}_{n, \epsilon}^\gamma) \leq \sum_{i=1}^{n(\ell+1)} m(A_{i, \epsilon}^\gamma) \leq n(\ell+1) e^{\epsilon n (\ell + 1)}$$
hence 
$$\limsup_n \frac{1}{n} \log m(\widetilde{A}_{n, \epsilon}^\gamma) \leq \epsilon(\ell+1)$$
which shows (2) for the stopping time.
\end{proof}

\section{Approximation criteria}

\subsection{Ray approximation}

We are now ready to prove the first geometric criterion to identify the Poisson boundary, the ray approximation criterion. 

\begin{theorem}[Ray approximation, version 1] \label{thm : ray1}
Let $G$ be a locally compact group, and let $\mu$ be a probability measure on $G$ with bounded density with respect 
to the left Haar measure on $G$. Suppose that the differential entropy $H_n$ is finite for all $n$, and let 
$(B,\lambda)$ be a $\mu$--boundary. 
Suppose there exists a measurable subset $W$ of $B$ with positive measure such that for any $b$ in $W$ there exists a sequence of 
uniformly temperate gauges $\g^n(b)$ such that for every $\epsilon>0$,
$$
\limsup_n\PP^b \left(  \x : \frac{1}{n}  | x_n |_{ \g^n( b)  } < \epsilon  \right)> 0.
$$
Then $(B,\lambda)$ is the Poisson boundary.
\end{theorem}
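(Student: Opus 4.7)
The plan is to reduce this theorem to the Entropy Criterion (Theorem \ref{thm : general An sets}) by manufacturing the trapping sets $A_{n,\epsilon}^b$ directly from balls in the given gauges. For fixed $b \in W$, write $\g^n = \g^n(b)$, so that each gauge consists of nested sets $(\g^n_k)_{k\geq 1}$. Uniform temperateness of the family $\{\g^n\}_{n\geq 1}$ furnishes a constant $C = C(b) > 0$ such that $m(\g^n_k) \leq e^{Ck}$ for all $n,k \geq 1$.

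Given $\epsilon > 0$, I would apply the standing hypothesis with the tightened rate $\epsilon' := \epsilon/(2C)$, yielding
$$\limsup_n \PP^b\!\left(\x : \tfrac{1}{n}|x_n|_{\g^n} < \epsilon'\right) > 0,$$
and then define $A_{n,\epsilon}^b := \g^n_{\lfloor \epsilon' n \rfloor + 1}$. Each such set is measurable with finite Haar measure. Condition (1) of Theorem \ref{thm : general An sets} follows immediately, since $\tfrac{1}{n}|x_n|_{\g^n} < \epsilon'$ forces $x_n \in A_{n,\epsilon}^b$, and so the left-hand side above is a lower bound for $\limsup_n \PP^b(x_n \in A_{n,\epsilon}^b)$. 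Condition (2) is the temperateness estimate
$$\limsup_n \tfrac{1}{n}\log m(A_{n,\epsilon}^b) \;\leq\; \limsup_n \tfrac{C(\lfloor \epsilon' n \rfloor + 1)}{n} \;=\; C\epsilon' \;=\; \tfrac{\epsilon}{2} \;<\; \epsilon.$$
Since the hypotheses on $\mu$ (bounded density and finite $H_n$) match those of Theorem \ref{thm : general An sets}, and since this construction works for every $b$ in the positive-measure set $W$, that theorem yields $h(\nu) = h(\lambda)$ and hence that $(B,\lambda)$ is the Poisson boundary.

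There is essentially no further difficulty once the Entropy Criterion is available: the argument is a direct translation from gauge balls to trapping sets. The only bookkeeping subtlety is the factor of $2$ in the choice of $\epsilon'$, forced by the fact that Theorem \ref{thm : general An sets} requires the \emph{strict} inequality $\limsup\tfrac{1}{n}\log m(A_{n,\epsilon}^b) < \epsilon$; taking $\epsilon' = \epsilon/C$ would only give equality in the limit, which does not suffice. The substantive work has already been discharged in earlier sections—the weak Shannon-McMillan-Breiman theorem \ref{thm : bounded density} and the pointwise Furstenberg-entropy convergence of Theorem \ref{theo : Furstenberg-entropy}—so no additional geometric or analytic input is needed at this stage.
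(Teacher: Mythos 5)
Your proof is correct and follows essentially the same route as the paper: both arguments define the trapping sets $A_{n,\epsilon}^b$ as gauge balls of radius roughly $\epsilon n$ in $\g^n(b)$, deduce condition (1) of the Entropy Criterion (Theorem \ref{thm : general An sets}) directly from the hypothesis, and condition (2) from uniform temperateness. Your explicit rescaling $\epsilon' = \epsilon/(2C)$ is a minor improvement in bookkeeping over the paper's write-up, which only records the bound $\limsup_n \frac{1}{n}\log m(A_{n,\epsilon}) \leq c\epsilon$ and leaves the (harmless) relabelling of $\epsilon$ implicit.
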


\begin{proof}
Let us define for any $b \in W$, $\epsilon > 0$ and $n \in \mathbb{N}$
$$
A^b_{n,{\epsilon}} := \left\{ g \in G : \frac{1}{n}  | g |_{ \g^n( b)  } < \epsilon \right\}. 
$$
By the assumption, $\limsup_n \PP^b \left( \x : x_n \in A^b_{n,\epsilon} \right) >0$ for all $\epsilon>0$. We also know that
$$
A^b_{n,\epsilon} \subset \g^n_{ [n\epsilon ] +1  } (b).
$$
Since $\g^n(b)$ is uniformly temperate, there exists $c>0$ such that $m(A_{n,\epsilon}) \leq e^{(1+n\epsilon) c}$.  Hence, $\limsup_n \frac{1}{n} \log m(A_{n,\epsilon})  \leq c\epsilon$. Therefore, by Theorem~\ref{thm : general An sets}, $(B,\lambda)$ is the Poisson boundary.
\end{proof}

Let us now extend this theorem to all measures with finite first moment. This will be done by combining the above Theorem with the stopping time trick; 
for this reason, we need to replace the $\limsup$ in the hypothesis by $\liminf$. 

\begin{theorem}[Ray approximation, version 2] \label{thm : ray2}
Let $\mu$ be a spread-out probability measure on a locally compact group $G$, with finite first moment w.r.t. a sub-additive, temperate gauge $| \cdot |_\g$ on $G$.
Let $(B,\lambda)$ be a $\mu$--boundary.  
Suppose there exists a measurable subset $W$ of $B$ with positive measure such that for any $b$ in $W$ there exists a sequence of 
uniformly temperate gauges $\g^n(b)$ such that for every $\epsilon>0$,
$$
\liminf_n\PP^b \left( \x : \frac{1}{n}  | x_n |_{ \g^n( b)  } < \epsilon  \right) > 0.
$$
Then $(B,\lambda)$ is the Poisson boundary.
\end{theorem}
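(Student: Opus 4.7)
The strategy is to bootstrap from the bounded density case (Theorem~\ref{thm : ray1}) to the spread-out case via the stopping time trick developed in Section~\ref{sec : Stopping time trick}. First, I would apply Theorem~\ref{thm : spread-out measure trick} to produce a probability measure $\theta$ on $G$ such that: (i) $\theta$ is absolutely continuous w.r.t.\ Haar measure with bounded density, (ii) $H(\theta^{*n}) < \infty$ for all $n$ (using Corollary~\ref{cor : Hn finite} and the finite first moment of $\mu$), and (iii) $(G,\theta)$ and $(G,\mu)$ have the same Poisson boundary. By construction, $\theta = \mu_\tau$ for some stopping time $\tau$ with finite expectation $\ell := E(\tau)$, and the random walk driven by $\theta$ is exactly the subsequence $(x_{\tau_n})$ of the original walk. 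In particular, $(B,\lambda)$ is also a $\theta$-boundary.

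Next, I would translate the ray-tracking hypothesis into the form required by the entropy criterion Theorem~\ref{thm : general An sets}, applied to the measure $\theta$. For each $b \in W$ and each $\epsilon > 0$, set
$$A^b_{n,\epsilon} := \{ g \in G : |g|_{\g^n(b)} < n\epsilon \} \subseteq \g^n_{[n\epsilon]+1}(b).$$
The hypothesis gives $\liminf_n \PP^b(x_n \in A^b_{n,\epsilon}) > 0$, and uniform temperateness of the family $\{\g^n(b)\}$ yields a constant $C > 0$ (depending only on the family) with $\limsup_n \frac{1}{n}\log m(A^b_{n,\epsilon}) \leq C\epsilon$. These are precisely the hypotheses of Lemma~\ref{L:entro-stop}, whose conclusion supplies sets $\widetilde{A}^b_{n,\epsilon} \subseteq G$ with
$$\liminf_n \PP^b\bigl(x_{\tau_n} \in \widetilde{A}^b_{n,\epsilon}\bigr) > 0, \qquad \limsup_n \frac{1}{n}\log m(\widetilde{A}^b_{n,\epsilon}) \leq C\epsilon(\ell + 1).$$

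Finally, given any target $\epsilon > 0$ for the application of Theorem~\ref{thm : general An sets}, I would feed in the sets $\widetilde{A}^b_{n,\epsilon'}$ obtained above with $\epsilon' := \epsilon / (C(\ell+1))$. Since $\liminf > 0$ trivially implies $\limsup > 0$, both conditions of Theorem~\ref{thm : general An sets} are verified for the measure $\theta$ and the $\theta$-boundary $(B,\lambda)$. We conclude that $(B,\lambda)$ is the Poisson boundary of $(G,\theta)$, and hence of $(G,\mu)$ by step (iii).

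The main subtlety, and the reason the theorem is stated with $\liminf$ rather than $\limsup$, lies in step two: after passing to the stopped walk, one can only guarantee trapping at times $\tau_n$ if the trapping along $x_n$ happened with positive probability \emph{for a positive fraction of indices}, not merely along some sparse subsequence. This is exactly what Lemma~\ref{L:entro-stop} exploits via Egorov's theorem on the ergodic convergence $\tau_n/n \to \ell$; without the $\liminf$ assumption, the sets $\widetilde{A}^b_{n,\epsilon}$ would risk missing the indices $\tau_n$ entirely. Everything else is a bookkeeping exercise in chasing the constant $C(\ell+1)$ through the reduction.
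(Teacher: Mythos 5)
Your proof is correct and is exactly the argument the paper intends: the paper leaves the proof of this version implicit, saying only that it follows by ``combining the above Theorem with the stopping time trick,'' and your write-up fills in precisely that route --- pass to $\theta=\mu_\tau$ via Theorem~\ref{thm : spread-out measure trick}, transfer the trapping sets with Lemma~\ref{L:entro-stop}, and invoke the entropy criterion (Theorem~\ref{thm : general An sets}) for $(G,\theta)$. You also correctly handle the rescaling of $\epsilon$ by the factor $C(\ell+1)$ coming out of Lemma~\ref{L:entro-stop}, and your closing remark about why $\liminf$ (rather than $\limsup$) is needed matches the paper's stated motivation.
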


Since almost sure convergence implies convergence in probability, we immediately obtain the following version.

\begin{corollary}[Ray approximation, version 3] \label{cor:ray-crit}
Let $\mu$ be a probability measure on a locally compact group $G$, with finite first moment w.r.t. a sub-additive, temperate gauge $| \cdot |_\g$ on $G$.
Let $(B,\lambda)$ be a $\mu$--boundary, and suppose that for almost every $b \in B$ there exists a sequence of 
uniformly temperate gauges $\g^n(b)$ such that 
$$\frac{1}{n} \left| x_n \right|_{\g^n(x_\infty^\lambda)} \to 0 \qquad \PP\textup{-a.s.}$$
Then $(B, \lambda)$ is the Poisson boundary. 
\end{corollary}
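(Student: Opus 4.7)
The plan is to deduce this directly from Theorem~\ref{thm : ray2}, since the corollary is essentially a rephrasing of version 2 under the stronger hypothesis of almost sure convergence (assuming implicitly that $\mu$ is spread--out, as inherited from Theorem~\ref{thm : ray2}). The only task is to convert the global $\PP$--a.s.\ statement in the hypothesis into the conditional, probability--in--each--fiber statement required by Theorem~\ref{thm : ray2}.

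First, I would disintegrate the path space measure over the $\mu$--boundary. The boundary map $\x \mapsto x_\infty^\lambda$ gives $\PP = \int_B \PP^b \, d\lambda(b)$, where $\PP^b$ is the conditional measure on sample paths whose boundary point is $b$. A standard Fubini argument then converts the $\PP$--a.s.\ convergence
$$\frac{1}{n} |x_n|_{\g^n(x_\infty^\lambda)} \to 0$$
into the statement that for $\lambda$--almost every $b \in B$, one has $\frac{1}{n}|x_n|_{\g^n(b)} \to 0$ for $\PP^b$--almost every sample path.

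Second, I would invoke the elementary fact that almost sure convergence implies convergence in probability. Fix such a $b$ and any $\epsilon > 0$; then
$$\lim_{n \to \infty} \PP^b\!\left( \x : \frac{1}{n}|x_n|_{\g^n(b)} < \epsilon \right) = 1,$$
and in particular the liminf of this probability equals $1$, which is strictly positive.

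Finally, let $W \subseteq B$ be the set of boundary points where the above holds; by construction $\lambda(W) = 1 > 0$. For every $b \in W$ the hypothesis of Theorem~\ref{thm : ray2} is verified, so that theorem yields that $(B,\lambda)$ is the Poisson boundary. There is no real obstacle in this argument: the work has already been done in Theorem~\ref{thm : ray2}, and the corollary just records the easy observation that one may upgrade ``$\liminf \PP^b(\,\cdot\,) > 0$'' from a convergence--in--probability hypothesis to an almost sure one by tightening the assumption, not the proof.
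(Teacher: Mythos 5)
Your proposal is correct and follows exactly the route the paper takes: the paper's entire proof is the one-line observation that almost sure convergence implies convergence in probability, so that the hypothesis of Theorem~\ref{thm : ray2} holds (your disintegration $\PP = \int_B \PP^b\, d\lambda(b)$ just makes explicit how the global a.s.\ statement becomes the fiberwise $\liminf_n \PP^b(\cdot) > 0$ condition). Your remark that the spread-out hypothesis must be inherited implicitly from Theorem~\ref{thm : ray2} is also accurate, since the corollary's statement omits it.
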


The most common way to apply the criterion is to show that the random walk lies at sublinear distance from 
a ``ray" connecting the starting point with the boundary point. More precisely, the following corollary immediately implies Theorem \ref{ray:intro}. 

\begin{corollary}[Ray approximation, version 4] \label{cor : ray-all}
Let $| \cdot |_\g$ be a sub-additive, temperate gauge on $G$, and suppose that $\mu$ has finite first moment w.r.t. $| \cdot |_\g$.
Assume that $(B,\lambda)$ be a $\mu$--boundary. If there exists a sequence of measurable functions $\pi_n: B \to G$ such that 
$$
\frac{1}{n} \left| \left( \pi_n(x^\lambda_\infty) \right)^{-1}x_n\right|_\g \to 0
$$
in $\PP$--probability, then $(B,\lambda)$ is the Poisson boundary.
\end{corollary}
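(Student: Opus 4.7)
The plan is to reduce to the bounded-density case via the stopping time trick (Theorem \ref{thm : spread-out measure trick}), and then verify the $\limsup$ hypothesis of Theorem \ref{thm : ray1} for the resulting random walk. One cannot directly verify the $\liminf$ hypothesis of Theorem \ref{thm : ray2}, because convergence in probability is only an assertion about the total measure $\PP = \int \PP^b\, d\lambda(b)$; a reverse Fatou argument produces $\limsup_n \PP^b(\cdot) = 1$ for $\lambda$-a.e.\ $b$, but not the stronger $\liminf_n \PP^b(\cdot) > 0$. Concretely, apply Theorem \ref{thm : spread-out measure trick} to produce $\theta = \mu_\tau$ with bounded density, $H(\theta^{*n})$ finite for all $n$, the same Poisson boundary as $\mu$, and finite first moment with respect to $\g$; write $\ell := E(\tau) < \infty$, so that the $\theta$-walk is $x^\theta_n = x_{\tau_n}$ with $\tau_n/n \to \ell$ almost surely. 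Set $\pi^\theta_n(b) := \pi_{[\ell n]}(b)$ and define the left-translated gauges $\g^n_k(b) := \pi^\theta_n(b)\, \g_k$; left-invariance of the Haar measure gives $m(\g^n_k(b)) = m(\g_k) \leq C e^{Ck}$, so $\{\g^n(b)\}_{n,b}$ is uniformly temperate.

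The core estimate is to show
$$\frac{1}{n}\bigl|\pi^\theta_n(x^\lambda_\infty)^{-1}\, x_{\tau_n}\bigr|_\g \;\longrightarrow\; 0 \qquad\text{in $\PP$-probability.}$$
By sub-additivity of $|\cdot|_\g$,
$$\bigl|\pi_{[\ell n]}(x^\lambda_\infty)^{-1} x_{\tau_n}\bigr|_\g \;\leq\; \bigl|\pi_{[\ell n]}(x^\lambda_\infty)^{-1} x_{[\ell n]}\bigr|_\g \;+\; \bigl|x_{[\ell n]}^{-1} x_{\tau_n}\bigr|_\g.$$
The first summand divided by $n$ equals $\tfrac{[\ell n]}{n}\, Y_{[\ell n]}$, where $Y_m := \tfrac{1}{m}|\pi_m(x^\lambda_\infty)^{-1} x_m|_\g \to 0$ in probability by hypothesis; since $[\ell n]/n$ is bounded, this product tends to $0$ in probability. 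The second summand is dominated by $|S_{\tau_n} - S_{[\ell n]}|$, where $S_N := \sum_{i=1}^N |g_i|_\g$ is a sum of i.i.d.\ random variables of finite mean; Kolmogorov's strong law yields $S_N = N\, E|g|_\g + o(N)$ a.s., and since $\tau_n - [\ell n] = o(n)$ a.s., the second summand is also $o(n)$ a.s.

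Combining the core estimate with $\PP = \int \PP^b\, d\lambda(b)$ gives, for every $\epsilon > 0$,
$$\int_B \PP^b\Bigl(\tfrac{1}{n}\bigl|\pi^\theta_n(b)^{-1} x_{\tau_n}\bigr|_\g < \epsilon\Bigr) d\lambda(b) \;=\; \PP\Bigl(\tfrac{1}{n}\bigl|\pi^\theta_n(x^\lambda_\infty)^{-1} x_{\tau_n}\bigr|_\g < \epsilon\Bigr) \;\to\; 1.$$
Since the integrand lies in $[0,1]$, the reverse Fatou lemma forces $\limsup_n \PP^b(\cdot) = 1$ for $\lambda$-a.e.\ $b$. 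Hence $\limsup_n \PP^b(\tfrac{1}{n}|x^\theta_n|_{\g^n(b)} < \epsilon) > 0$ for $\lambda$-a.e.\ $b$ and every $\epsilon > 0$, which is exactly the hypothesis of Theorem \ref{thm : ray1} applied to $(G, \theta)$. That theorem identifies $(B, \lambda)$ as the Poisson boundary of $(G, \theta)$, which coincides with the Poisson boundary of $(G, \mu)$. The main obstacle is the re-indexing step in the core estimate: comparing the walk at the deterministic time $[\ell n]$ with the random time $\tau_n$, the discrepancy $|x_{[\ell n]}^{-1} x_{\tau_n}|_\g$ is controlled by finite first moment via Kolmogorov's law, which is the underlying reason why a finite logarithmic moment is not sufficient for this strategy (cf.\ Remark \ref{R:fake}).
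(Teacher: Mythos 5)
Your proposal is correct, and it takes a genuinely different (and more careful) route than the paper, which offers no written proof of this corollary: it is presented as following immediately from Theorem \ref{thm : ray2} by setting $\g^n_k(b) := \pi_n(b)\,\g_k$ (uniformly temperate by left-invariance of $m$). As you observe, that reduction is delicate: convergence in $\PP$-probability only yields $\limsup_n \PP^b(\cdot)=1$ for $\lambda$-a.e.\ $b$ via reverse Fatou, whereas Theorem \ref{thm : ray2} demands $\liminf_n \PP^b(\cdot)>0$ --- the $\liminf$ being exactly what Lemma \ref{L:entro-stop} consumes when it evaluates the sets $A^\gamma_{n,\epsilon}$ at the random times $\tau_n$. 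Your fix --- performing the stopping-time reduction by hand, proving $\frac{1}{n}\bigl|\pi_{[\ell n]}(x_\infty^\lambda)^{-1}x_{\tau_n}\bigr|_\g\to 0$ in probability via sub-additivity, the strong law for $S_N=\sum_{i\leq N}|g_i|_\g$, and $\tau_n=[\ell n]+o(n)$, and then feeding the resulting $\limsup$ condition into the bounded-density Theorem \ref{thm : ray1} applied to $\theta$ --- is sound and arguably the argument the paper should have spelled out; what it buys is a derivation that genuinely needs only convergence in probability, at the cost of redoing the time-change estimate rather than quoting Lemma \ref{L:entro-stop}. Two small caveats: (i) the corollary as stated omits the spread-out hypothesis, which your invocation of Theorem \ref{thm : spread-out measure trick} (and the paper's Theorem \ref{ray:intro}) requires, so you are right to assume it; (ii) on the event $\tau_n<[\ell n]$ your bound on $\bigl|x_{[\ell n]}^{-1}x_{\tau_n}\bigr|_\g$ by $S_{[\ell n]}-S_{\tau_n}$ implicitly uses $|g^{-1}|_\g\leq |g|_\g$, which a general sub-additive gauge need not satisfy; either symmetrize the gauge or note that in all of the paper's applications the gauge comes from an isometric action and is symmetric.
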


\subsection{Strip approximation}

We also obtain the strip approximation criterion, which requires bilateral sample paths to be approximated by bilateral ``strips". 

\begin{theorem}[Strip approximation] \label{thm : Strip Approximation-stop}
Let $G$ be a locally compact group, and let $\mu$ be a spread-out probability measure on $G$. 
Let $(B_-,\lambda_-)$ be a $\check{\mu}$--boundary and $(B_+,\lambda_+)$ be a $\mu$--boundary. If there exists a sub-additive, temperate gauge $\g$ such that $\mu$ has finite first moment and a measurable $G$--equivariant map $S(b_-,b_+) \subset G$ such that  $\lambda_-\otimes \lambda_+\{(b_-,b_+) : e \in S(b_-,b_+) \}= q >0$ and 
 for $\lambda_-\otimes\lambda_+$-almost every $(b_-,b_+)$ in $B_-\times B_+$
$$
\frac{1}{n} \log^+ m\left( S(b_-,b_+) \cap \g_{|x_n|} \right) \to 0
$$
in $\PP$--probability, then $(B_+,\lambda_+)$ is the Poisson boundary.
\end{theorem}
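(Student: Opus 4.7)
The plan is to mimic the two-step structure of Theorem~\ref{thm : ray2}: reduce to a measure with bounded density via the stopping time trick of Theorem~\ref{thm : spread-out measure trick}, then apply the entropy criterion Theorem~\ref{thm : general An sets} with trapping sets built out of the strips $S(b_-,b_+)$. Concretely, I replace $\mu$ by the probability measure $\theta$ produced by Theorem~\ref{thm : spread-out measure trick}: $\theta$ has bounded density, the same Poisson boundary as $\mu$ (so the $\mu$- and $\check\mu$-boundaries $(B_\pm,\lambda_\pm)$ remain $\theta$- and $\check\theta$-boundaries), finite first moment with respect to $\g$ by Lemma~\ref{thm : finite moment tau}, and finite differential entropies $H_n$ by Corollary~\ref{cor : Hn finite}. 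The map $S$ and its $G$-equivariance are unchanged. By Lemma~\ref{L:entro-stop}, it is enough to produce trapping sets $A_{n,\epsilon}^{b_+}$ for $\theta$ and transport them back to $\mu$.

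For $\theta$, I would pass to the bilateral extension of the walk in which $x_{-\infty}\in B_-$ and $x_\infty^{\lambda_+}\in B_+$ are simultaneously defined. The $G$-equivariance $gS(b_-,b_+)=S(gb_-,gb_+)$ together with shift-invariance of the bilateral measure gives, for every $n\in\mathbb{Z}$,
$$\PP\bigl\{x_n\in S(x_{-\infty},x_\infty^{\lambda_+})\bigr\}\ =\ \PP\bigl\{e\in S(x_{-\infty},x_\infty^{\lambda_+})\bigr\}\ =\ q.$$
Finite first moment and subadditivity of $\g$ yield $|x_n|_\g\le Cn$ eventually a.s.\ by Kingman's theorem, while the hypothesis $n^{-1}\log^+ m(S(b_-,b_+)\cap \g_{|x_n|})\to 0$ in $\PP$-probability provides, for every $\epsilon>0$, a set of $\PP$-probability at least $q-o(1)$ on which $x_n$ lies in $S(x_{-\infty},x_\infty^{\lambda_+})\cap \g_{Cn}$ and this intersection has Haar measure at most $e^{\epsilon n}$.

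To convert this into a trapping set depending only on $b_+=x_\infty^{\lambda_+}$, I would disintegrate the joint law of $(x_{-\infty},x_\infty^{\lambda_+})$ over its $B_+$ marginal and set
$$A_{n,\epsilon}^{b_+}\ :=\ \bigcup\bigl\{S(b_-,b_+)\cap \g_{Cn}\ :\ b_-\in E_{n,\epsilon}^{b_+}\bigr\},\qquad E_{n,\epsilon}^{b_+}\ :=\ \bigl\{b_-\in B_-:\,m(S(b_-,b_+)\cap \g_{Cn})\le e^{\epsilon n}\bigr\}.$$
Condition~(1) of Theorem~\ref{thm : general An sets} follows from the positive-probability event above, once one observes that for $\lambda_+$-typical $b_+$ the conditional distribution of $x_{-\infty}$ is concentrated on $E_{n,\epsilon}^{b_+}$ with probability tending to one. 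Lemma~\ref{L:entro-stop} then transports the $A_{n,\epsilon}^{b_+}$ back to trapping sets for $\mu$, and Theorem~\ref{thm : general An sets} concludes that $(B_+,\lambda_+)$ is the Poisson boundary.

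The main obstacle is verifying condition~(2), namely $\limsup_n n^{-1}\log m(A_{n,\epsilon}^{b_+})<\epsilon'$ with $\epsilon'\to 0$ as $\epsilon\to 0$. The naive estimate $m(A_{n,\epsilon}^{b_+})\le m(\g_{Cn})$ is exponential in $n$ and therefore useless. The fix should come from applying the weak Shannon--McMillan--Breiman theorem (Theorem~\ref{thm : bounded density}) to the reverse walk on $(B_-,\lambda_-)$: the Radon--Nikodym derivative $dy_n\lambda_-/d\lambda_-$ decays exponentially at the Furstenberg rate $h(\lambda_-)$, which constrains how dispersed $E_{n,\epsilon}^{b_+}$ can be and should, via a Fubini computation weighted by $\lambda_-$, yield the desired subexponential bound on the union of strips. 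Threading this estimate through while preserving measurability of all the random constructions in the bilateral picture is the delicate technical step.
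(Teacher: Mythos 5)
Your overall strategy (stopping-time reduction to a bounded-density measure $\theta$, then the entropy criterion of Theorem~\ref{thm : general An sets} with strip-based trapping sets) is the same as the paper's, but the execution has two gaps, one of which you flag yourself and neither of which your proposed fix closes.

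First, the inclusion in your measure estimate goes the wrong way. You use $|x_n|_\g \le Cn$ to place $x_n$ in $S\cap\g_{Cn}$, and then want $m(S\cap\g_{Cn})\le e^{\epsilon n}$; but the hypothesis controls $m(S\cap\g_{|x_n|})$, and $|x_n|\le Cn$ gives $\g_{|x_n|}\subseteq\g_{Cn}$, so the hypothesis bounds the \emph{smaller} set and says nothing about $m(S\cap\g_{Cn})$. The paper resolves this by defining a deterministic radius $K_n$ as a \emph{lower} quantile of the law of $|x_{\tau_n}|$, namely $K_n:=\min\{k:\theta^{*n}(\g_k)\ge 1-p/2\}$, so that $\PP(|x_{\tau_n}|\ge K_n)>p/2$; on that event $\g_{K_n}\subseteq\g_{|x_{\tau_n}|}$ and the hypothesis does transfer to the deterministic quantity $m(S(b_-,b_+)\cap\g_{K_n})$, which is what condition (2) of the entropy criterion needs. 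Second, and more seriously, your trapping set is a union of strips over an uncountable family $E_{n,\epsilon}^{b_+}\subseteq B_-$, and, as you note, there is no subexponential bound on the Haar measure of that union; the suggested repair via the Shannon--McMillan--Breiman theorem for the reverse walk is not an argument --- exponential decay of Radon--Nikodym derivatives on $B_-$ does not control the Haar measure of a union of sets in $G$ indexed by $B_-$. The paper avoids the union entirely: since $\int \widetilde{p}_n^{b^+}[S(b_-,b_+)\cap\g_{K_n}]\,d\lambda_-\,d\lambda_+\ge p/2$, a Chebyshev argument on the pair space produces a set of pairs $(b_-,b_+)$ of measure at least $p/4$ on which the \emph{single} strip already satisfies $\widetilde{p}_n^{b^+}[S(b_-,b_+)\cap\g_{K_n}]\ge p/4$; one then selects, for each $b_+$ in the projection of $\limsup_n X_n$, one $b_-$ with $(b_-,b_+)\in X_n$ infinitely often and takes $A_{n,\epsilon}^{b_+}:=S(b_-,b_+)\cap\g_{K_n}$ for that single $b_-$. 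This is the missing idea, and it is also how Kaimanovich's discrete argument works; without it your condition (1) forces the union and your condition (2) cannot be verified.
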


The proof is almost exactly the same as for countable groups (as in \cite{Ka}, Section 6.4), using our new Theorem \ref{thm : bounded density} and the stopping time technique. For the sake of completeness, we will write it in the Appendix. 

\section{Applications} \label{S:app}

Let $(X, d)$ be a metric space, and consider a group $G$ which acts by isometries on $X$. Let us review some definitions on metric spaces: for more details, 
see \cite{CdH}.

Recall that a metric space is \emph{proper} if closed metric balls are compact.  A \emph{geodesic segment} between $x$ and $y$ is a map $\gamma : [0, d(x, y)] \to X$ such that $\gamma(0) = x$, $\gamma(d(x,y)) = y$ and 
$d(\gamma(s), \gamma(t)) = |s-t|$ for all $s, t \in [0, d(x,y)]$. A metric space is \emph{geodesic} if for any $x, y \in X$ there exists a 
geodesic segment between $x$ and $y$.

The metric on $X$ induces a gauge on $G$. Indeed, let us fix a base point $o \in X$. Then define $\g_k := \{ g \in G \ : \ d(o, go) \leq k \}$.
By the triangle inequality, such a gauge is sub-additive. We say that the action of $G$ on $X$ has \emph{bounded exponential growth} if there exists $C > 0$ such that 
$$m(\{ g \in G \ : \ d(o, go) \leq R \}) \leq C e^{CR} \qquad \textup{for all } R.$$
If the action has bounded exponential growth, then the gauge is temperate.
This condition is satisfied in many interesting examples as we will see below.

A space is \emph{coarsely connected} if there exists $C > 0$ such that for any $x, y \in X$ there exists a chain $x = x_0, x_1, \dots, x_k = y$ 
of points in $X$ such that $d(x_i, x_{i+1}) \leq C$ for any $i = 0, \dots, k-1$. In particular, a space which is path connected, such as a geodesic 
metric space, is coarsely connected. 

If $X$ is a non-empty, proper, coarsely connected metric space, and the action of $\textup{Isom}(X)$ on $X$ is cocompact, then 
the group $G = \textup{Isom}(X)$ is compactly generated and the action has bounded exponential growth.
This in particular holds if $X$ is a locally finite, connected graph, and the action of $\textup{Aut}(X)$ on $X$ is cobounded. 

More generally, if $G$ is a locally compact group with a \emph{geometric} (i.e. isometric, cobounded, metrically proper, and locally bounded\footnote{The action of $G$ on $X$ is \emph{metrically proper} if $S_{x, R} := \{ g \in G \ : \ d(x, gx) \leq R \}$
is relatively compact for any $x \in X$ and any $R \geq 0$. 
The action is \emph{locally bounded} if for any $g \in G$ and any bounded set $B \subseteq X$, there exists a neighbourhood 
$V$ of $g$ in $G$ such that $VB$ is bounded.} ) action 
on a coarsely connected metric space $X$, then $G$ is compactly generated and the action has bounded exponential growth.

 Now, let us endow $G = \textup{Isom}(X)$ with the compact open topology. If $X$ is proper, then the group $\textup{Isom}(X)$ is a locally compact, second countable, Hausdorff topological group  (see e.g. \cite{CdH}, Lemma 5.B.4), hence it carries a left Haar measure $m$. 
Recall also that in general a closed subgroup of a locally compact group is locally compact. 

Let us call a measure $\mu$ on a locally compact group $G$ \emph{weakly spread-out} if $\mu$ is spread-out with respect 
to the Haar measure on the closed subgroup generated by the support of $\mu$. 

\subsection{Automorphisms of trees}

Let $\mathbb{T}$ be a locally finite, infinite simplicial tree. The automorphism group $G$ of $\mathbb{T}$ is a totally disconnected, locally compact group. 
As shown by Cartwright-Soardi  \cite{CS}, if the group generated by the support of $\mu$ is not amenable, 
then the random walk converges almost surely to the space of ends of $\mathbb{T}$.

Let us remark that an interesting particular case is the group $G = \textup{PGL}(2, \mathbb{K})$ where 
$\mathbb{K}$ is a non-archimedean local field (see also Section \ref{S:affine}). 

\begin{theorem}
Let $\mu$ be a regular Borel probability measure on the group $G$ of automorphisms of a locally finite tree $\mathbb{T}$. 
Assume that $\mu$ has finite first moment for the simplicial metric on $\mathbb{T}$ and that the support of $\mu$ is not contained in any amenable subgroup of $G$. 
Then the space of ends of $\mathbb{T}$ with the hitting measure is the Poisson boundary of $(G, \mu)$. 
\end{theorem}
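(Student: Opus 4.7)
The plan is to identify the Poisson boundary by applying the entropy criterion of Theorem \ref{thm : general An sets} to the $\mu$-boundary $(\partial \mathbb{T}, \nu)$. By Cartwright-Soardi \cite{CS}, under the non-amenable support hypothesis almost every sample path $(x_n o)$ converges in $\mathbb{T} \cup \partial \mathbb{T}$ to a random end $x_\infty$, and its distribution $\nu$ is a $\mu$-stationary Borel probability measure on $\partial \mathbb{T}$, so $(\partial \mathbb{T}, \nu)$ is a $\mu$-boundary. Equip $G$ with the gauge $|g|_\g := d(o, g o)$ for a fixed base vertex $o$: this is sub-additive since $G$ acts by isometries, and temperate by Lemma \ref{L:autX} (as $\mathbb{T}$ is locally finite). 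The finite first moment hypothesis becomes $\int |g|_\g\, d\mu < \infty$. To meet the bounded-density and finite differential entropy assumptions required by the entropy criterion, one first passes from $\mu$ to a bounded-density measure with the same Poisson boundary via the stopping-time construction of Theorem \ref{thm : spread-out measure trick}, using Lemma \ref{L:entro-stop} to transfer the eventual shadow sets back to the stopped walk.

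For each end $b \in \partial \mathbb{T}$, let $\gamma_b : [0, \infty) \to \mathbb{T}$ be the unique geodesic ray from $o$ to $b$, and let $A := \lim_n d(o, x_n o)/n$ be the a.s.-constant rate of escape, which is positive under the non-amenability hypothesis. Define the candidate shadows
$$A_{n, \epsilon}^b := \{ g \in G : d(g o, \gamma_b(\lfloor A n \rfloor)) \leq \epsilon n \}.$$
Hypothesis (2) of Theorem \ref{thm : general An sets} is immediate: local finiteness bounds the number of vertices within distance $\epsilon n$ of any fixed vertex by $D^{\epsilon n}$ with $D$ the maximum valence, and left-invariance of $m$ shows that for any vertex $v$ the set $\{g \in G : g o = v\}$ has Haar measure $m(G_o) < \infty$ (the compact stabilizer of $o$); hence $\limsup_n \frac{1}{n} \log m(A_{n, \epsilon}^b) \leq \epsilon \log D$, which can be made arbitrarily small by choosing $\epsilon$ small.

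The main obstacle is hypothesis (1), the sublinear tracking property $\liminf_n \PP^b(x_n \in A_{n, \epsilon}^b) > 0$. In a tree, a direct comparison gives
$$d(x_n o, \gamma_b(\lfloor A n \rfloor)) \leq 2\bigl( d(o, x_n o) - (x_n o \mid b)_o \bigr) + |d(o, x_n o) - An|,$$
so it suffices to show $(x_n o \mid b)_o / n \to A$ almost surely. The key computation exploits stationary increments: since $x_n^{-1} x_m = g_{n+1}\cdots g_m$ has the same law as $x_{m-n}$, one has $\mathbb{E}\, d(x_n o, x_m o) = \mathbb{E}\, d(o, x_{m-n} o) \sim A(m-n)$, and hence $\mathbb{E}(x_n o \mid x_m o)_o \sim An$ for all $n \leq m$ via the tree identity $(u \mid v)_o = \tfrac{1}{2}(d(o,u) + d(o,v) - d(u,v))$ for the Gromov product. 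Passing to the almost sure limit as $m \to \infty$, using that in a tree $(x_n o \mid x_m o)_o \to (x_n o \mid b)_o$ whenever $x_m o \to b$, together with an ergodic-theoretic upgrade from expectation to a.s. convergence, gives the desired linear growth and completes the verification of the entropy criterion, thereby identifying $(\partial \mathbb{T}, \nu)$ with the Poisson boundary of $(G, \mu)$.
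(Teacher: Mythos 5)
Your route is genuinely different from the paper's. The paper disposes of this theorem in three lines using the \emph{strip} criterion (Theorem \ref{thm : Strip Approximation-stop}): the strip $S(b_-,b_+)$ is a $K$-neighbourhood of the unique geodesic joining the two ends, so $m(S(b_-,b_+)\cap \g_R)$ grows only linearly in $R$ and the growth hypothesis is automatic; no information about the speed of the walk or about geodesic tracking is needed. You instead run the ray/entropy criterion with shadow sets around $\gamma_b(\lfloor An\rfloor)$, which is exactly how the paper treats general hyperbolic spaces in Theorem \ref{T:hyperbolic}. That route is viable for trees, and your verification of hypothesis (2) of Theorem \ref{thm : general An sets} (temperateness of the shadows via local finiteness and the compact vertex stabilizer) is correct, as is the use of Lemma \ref{L:entro-stop} to pass through the stopping-time reduction.

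The difficulty is that the two facts your route hinges on are asserted rather than proved. First, you claim $A>0$ follows from non-amenability of the support; this is true for non-elementary actions on trees with finite first moment, but it is a theorem requiring proof or citation, and without it the shadows $A^b_{n,\epsilon}$ degenerate to balls around $o$ and hypothesis (1) carries no boundary information. Second, and more seriously, your argument for $(x_no\mid b)_o/n\to A$ almost surely only establishes the asymptotics of $\mathbb{E}(x_no\mid x_mo)_o$; for a subadditive sequence $a_m=\mathbb{E}\,d(o,x_mo)$ the increments $a_m-a_{m-n}$ need not be close to $An$ except on average, and in any case an expectation estimate does not self-upgrade to an almost sure one. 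The ``ergodic-theoretic upgrade'' you invoke is precisely the nontrivial content of Kaimanovich's regularity theorem (\cite{Ka}, Theorem 7.2), which the paper cites for exactly this purpose in the proof of Theorem \ref{T:hyperbolic}. Cite that theorem (together with \cite{CS} for convergence to an end and a reference for positivity of the drift) and your argument closes; as written, the central tracking step is a genuine gap. (Note also that both your argument and the paper's implicitly use that $\mu$ is spread-out, which the statement of the theorem omits.)
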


\begin{proof}
By \cite{CS}, the space of ends of $\mathbb{T}$ is a $\mu$-boundary. 
Define the strip $S(b_-, b_+)$ as the $K$-neighborhood of the geodesic connecting $b_-$ and $b_+$. By choosing $K$ large enough, we can make 
sure that $\lambda_- \otimes \lambda_+ (e \in S(b_-, b_+) ) > 0$. The conclusion then follows from Theorem \ref{thm : Strip Approximation-stop}. 
\end{proof}

At least for the full automorphism group, the result also follows from \cite{Bader-Shalom}, who also obtain the Poisson boundary for groups acting strongly transitively on locally finite, 
affine, thick buildings.

\subsection{Groups of isometries of Gromov hyperbolic spaces}

Let $(X, d)$ be a metric space. The \emph{Gromov product} of two points of $x$ and $y$ with respect to $z$ in $X$ is defined as
$$
(x,y)_z := \frac{d(x,z)+d(y,z)-d(x,y)}{2}.
$$
The metric space $(X,d)$ is \emph{$\delta$-hyperbolic} if for any points $x,y,z,w$, 
$$
(x,y)_w \geq \min\{ (x,z)_w, (y,z)_w \} - \delta.
$$

A metric space $(X, d)$ is \emph{quasiconvex} if there exists a constant $C > 1 $ such that for any two points $x, y$ in $X$,
there is a sequence $(x_i)_{i = 1}^k$ of points of $X$ such that $x_1 = x$, $x_k = y$ and for any $1 \leq i < j \leq k$ we have 
$$j -i - C \leq d(x_i, x_j) \leq j-i + C.$$
In particular, a space which is \emph{geodesic}, i.e. any two points can be joined by a geodesic, is quasiconvex. 

Let $G$ be a group of isometries of $(X, d)$. Given a probability measure $\mu$ on $G$, let $G_\mu$ be the smallest closed 
subgroup of $G$ of full measure.
The measure $\mu$ is \emph{non-elementary} if the group $G_\mu$ is unbounded, and the action of $G_\mu$ on $\partial X$
has no finite orbits. 

\begin{proposition}[\cite{BQ}, Proposition 3.2]
Let $G$ be a group of isometries of a proper, quasiconvex Gromov hyperbolic space $X$, let $o \in X$ be a base point, and let $\mu$ be a non-elementary probability measure
on $G$. Then for almost every sample path $(x_n)$ the sequence $x_n o$ converges to a point in $\partial X$, and the space $(\partial X, \nu)$, where $\nu$ is the hitting measure, is a $\mu$-boundary.
\end{proposition}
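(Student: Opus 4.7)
The plan is to proceed in three steps: (i) construct a $\mu$-stationary probability measure $\nu$ on $\partial X$, (ii) prove almost sure convergence of the sample path $(x_n o)$ in the Gromov compactification $\overline{X}$, and (iii) verify that $(\partial X, \nu)$, with $\nu$ the distribution of the limit point, is a $\mu$-boundary. For (i), since $X$ is proper the Gromov compactification $\overline{X} = X \cup \partial X$ is compact metrisable and $\partial X$ is a compact $G$-space. Applying Kakutani--Markov to the continuous affine map $\lambda \mapsto \mu \ast \lambda$ on the weak-$\ast$ compact convex set $\mathcal{P}(\partial X)$ yields a $\mu$-stationary measure $\nu$. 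Non-elementarity rules out atoms, since the (finite) set of atoms of maximal $\nu$-mass is necessarily $G_\mu$-invariant, contradicting the absence of finite $G_\mu$-orbits on $\partial X$.

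Step (ii) is the technical core. By quasiconvexity and $\delta$-hyperbolicity, it suffices to show that the Gromov products $(x_n o, x_m o)_o$ tend to $+\infty$ almost surely as $\min(n,m)\to\infty$, since this forces $(x_n o)$ to converge to a point $x_\infty \in \partial X$ in the cone topology. The engine is non-elementarity: $G_\mu$ contains loxodromic isometries with pairwise distinct attracting and repelling fixed points on $\partial X$, and by convolution there exists $N$ for which $\mu^{*N}$ places positive mass on a compact set of elements whose action on $\partial X$ is strongly north--south with large contraction ratio. A pivoting / Borel--Cantelli argument, in the spirit of Kaimanovich, Maher--Tiozzo and the cited work of Benoist--Quint, applied to the occurrence of such contracting blocks $g_{k+1}\cdots g_{k+N}$ within the increments of $x_n$, then yields the required divergence of Gromov products.

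Once almost sure convergence is established, step (iii) is routine: the limit map $\omega \mapsto x_\infty(\omega)$ is measurable and $G$-equivariant, and its push-forward of $\PP$ is a $\mu$-stationary measure on $\partial X$, which one takes as $\nu$. This map is tail-measurable and factors through the Poisson boundary, exhibiting $(\partial X, \nu)$ as a $\mu$-boundary.

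The principal obstacle is (ii). In the countable discrete setting, the analogous boundary convergence theorem of Kaimanovich exploits subadditivity and positive drift and relies on a countable state space. Here $G$ is only locally compact and $\mu$ is a Borel probability measure, so counting arguments are unavailable and one must perform the pivoting measurably on $G$; non-elementarity---$G_\mu$ unbounded with no finite orbit on $\partial X$---provides precisely the qualitative input needed to run the contraction argument in this generality.
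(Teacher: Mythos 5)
The paper does not prove this proposition; it is imported verbatim from Benoist--Quint (the citation \cite{BQ} in the statement), and is used downstream as a black box in the proof of Theorem \ref{T:hyperbolic}. So there is no internal argument to compare yours against line by line, and the honest assessment is of your sketch on its own merits.

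Your three-step outline is the standard one, and steps (i) and (iii) are essentially fine: Kakutani--Markov on the weak-$\ast$ compact convex set of probabilities on the (compact, since $X$ is proper) bordification gives a stationary $\nu$, the maximal-mass atoms form a finite $G_\mu$-invariant set so non-elementarity forces $\nu$ to be non-atomic, and once a.s.\ convergence is known the limit map is measurable, equivariant and shift-compatible, hence defines a $\mu$-boundary. The problem is step (ii), which is the entire mathematical content of the proposition and which you reduce to the phrase ``a pivoting / Borel--Cantelli argument \dots then yields the required divergence of Gromov products.'' Two things are being assumed there that themselves require proof in this generality: that a non-elementary $G_\mu$ (defined only by unboundedness and absence of finite orbits on $\partial X$) contains loxodromic elements with distinct fixed points, and that some $\mu^{*N}$ charges a set of uniformly ``north--south contracting'' elements; neither is immediate, and the Borel--Cantelli bookkeeping for where the contracting blocks occur along the increment sequence is exactly the hard part. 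The route Benoist--Quint actually take (and the one I would recommend) sidesteps this: the sequence of measures $x_n\nu$ is a bounded measure-valued martingale, hence converges weakly a.s.\ to a limit $\nu_\omega$; one then shows, using non-atomicity of $\nu$ and hyperbolicity, that $\nu_\omega$ is a Dirac mass $\delta_{\xi(\omega)}$ and that $x_n o\to\xi(\omega)$. This uses only the non-atomicity you have already established in step (i) and no classification of isometries. Note also a small internal inconsistency in your write-up: you construct $\nu$ in step (i), never use it in step (ii), and then redefine $\nu$ as the hitting measure in step (iii); either you prove convergence without $\nu$ and take the hitting measure (then step (i) is redundant), or you run the martingale argument with $\nu$ and must then identify $\nu$ with the barycenter $\int\delta_{\xi(\omega)}\,d\PP(\omega)$.
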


We therefore get the following identification of the Poisson boundary. 

\begin{theorem} \label{T:hyperbolic}
Let $G$ be a closed group of isometries of a proper, quasiconvex Gromov hyperbolic space $X$ with bounded exponential growth, and let $\mu$ be a non-elementary, weakly spread-out probability measure on $G$ with finite first moment. Then the space $(\partial X, \nu)$ is a model for the Poisson boundary of $(G, \mu)$. 
\end{theorem}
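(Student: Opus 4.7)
The plan is to apply the strip approximation criterion (Theorem~\ref{thm : Strip Approximation-stop}) with both one-sided boundaries equal to the geometric boundary $\partial X$. Since the Poisson boundary of $(G,\mu)$ agrees with that of $(G_\mu,\mu)$ and $\mu$ is spread-out on $G_\mu$, I would replace $G$ by $G_\mu$ throughout; bounded exponential growth of the action is inherited by the closed subgroup $G_\mu$, because stabilizers in $G_\mu$ are compact and $G_\mu$-orbits are subsets of $G$-orbits. By the Benoist--Quint proposition recalled above, $(\partial X,\nu)$ and $(\partial X,\check\nu)$ are then a $\mu$-boundary and a $\check\mu$-boundary, respectively, where $\nu$ and $\check\nu$ denote the corresponding hitting measures.

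For any pair $(b_-,b_+)\in\partial X\times\partial X$ of distinct boundary points I would fix measurably a bi-infinite geodesic $\gamma_{b_-,b_+}$ joining them, and, for a constant $K>0$ to be specified, define the strip
$$
S(b_-,b_+) := \{\, g\in G_\mu \ :\ d(go,\gamma_{b_-,b_+})\leq K \,\}.
$$
This is a measurable, $G_\mu$-equivariant map $\partial X\times\partial X\to\mathcal{P}(G_\mu)$. The condition $e\in S(b_-,b_+)$ amounts to $d(o,\gamma_{b_-,b_+})\leq K$, and since non-elementarity forces $\nu$ and $\check\nu$ to be non-atomic and the support of $\check\nu\otimes\nu$ to charge pairs of distinct boundary points at every positive Gromov product, taking $K$ large enough makes the measure $\check\nu\otimes\nu$ of this event positive.

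The main step, which I expect to be the most delicate, is the growth estimate for $m_{G_\mu}\bigl(S(b_-,b_+)\cap\g_R\bigr)$ as $R\to\infty$. By $\delta$-hyperbolicity, the closest-point projection $N_K(\gamma_{b_-,b_+})\to\gamma_{b_-,b_+}$ is coarsely well defined with fibers of diameter $O(K+\delta)$, so $S(b_-,b_+)\cap\g_R$ projects onto a subsegment of $\gamma_{b_-,b_+}$ of length at most $2R+O(K)$, and the preimage of any unit subinterval lies in a ball of radius $O(K+\delta)$ in $X$. Bounded exponential growth of the $G_\mu$-action implies that any such ball contains uniformly boundedly many $G_\mu$-orbit points, and combined with finiteness of the Haar measure of the stabilizer this gives
$$
m_{G_\mu}\bigl(S(b_-,b_+)\cap\g_R\bigr) \leq C\cdot R
$$
for a constant $C$ depending only on $K$, $\delta$ and the action. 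Since $\mu$ has finite first moment, $|x_n|/n$ converges almost surely to a finite drift, and therefore
$$
\frac{1}{n}\log^+ m_{G_\mu}\bigl(S(b_-,b_+)\cap\g_{|x_n|}\bigr) \leq \frac{\log(C\,|x_n|)}{n} \longrightarrow 0
$$
almost surely, and in particular in $\mathbb{P}$-probability. All the hypotheses of Theorem~\ref{thm : Strip Approximation-stop} being thus verified, we conclude that $(\partial X,\nu)$ is the Poisson boundary of $(G,\mu)$.
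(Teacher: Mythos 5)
Your proof is essentially correct, but it takes the strip-approximation route, whereas the paper's own proof of Theorem \ref{T:hyperbolic} goes through the ray approximation criterion: it invokes the regularity of sample paths (\cite{Ka}, Theorem 7.2) to get a geodesic ray $\gamma$ with $d(x_no,\gamma(An))=o(n)$, takes the gauges $(\mathcal{G}^n(b))_k$ to be metric balls around $\gamma(An)$, notes these are uniformly temperate by bounded exponential growth, and concludes via Corollary \ref{cor:ray-crit}. The paper does record your alternative in a remark immediately after its proof, with one difference worth adopting: it defines $S(\xi,\eta)$ as a neighborhood of the \emph{union} $[\xi,\eta]$ of all bi-infinite geodesics joining $\xi$ and $\eta$, which is automatically $G$-equivariant, whereas your ``measurable choice of a single geodesic $\gamma_{b_-,b_+}$'' is not obviously compatible with equivariance (in a $\delta$-hyperbolic space all such geodesics are within Hausdorff distance $O(\delta)$ of each other, so this is harmless but should be stated via the union). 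Your volume estimate $m(S(b_-,b_+)\cap\g_R)\leq CR$ is correct and in fact stronger than the subexponential bound the criterion needs, though the justification is cleaner if phrased via left-invariance of Haar measure: each unit piece of the projected segment pulls back to a set contained in a left translate of $\g_{2(K+1)}$, whose measure is a constant by bounded exponential growth (the phrase ``uniformly boundedly many orbit points'' is not quite right when the orbit is non-discrete). What each approach buys: the ray argument is shorter here because the tracking statement is already available off the shelf, while your strip argument avoids any appeal to the multiplicative-ergodic-type regularity theorem and only uses hyperbolicity of geodesics plus the growth bound, at the cost of needing convergence of the reflected walk to set up the $\check\mu$-boundary.
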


\begin{proof}
By (\cite{Ka}, Theorem 7.2), almost every sequence $(x_n)$ is regular, that is there exists a geodesic ray $\gamma : [0, \infty) \to X$ 
such that $\frac{d(x_n o, \gamma(An )) }{n} \to 0$. Then for any boundary point $b \in \partial X$ and any $n$ one defines the 
gauge $\mathcal{G}^n(b)$ as the metric gauge around $\gamma(An)$, i.e.
$$(\mathcal{G}^n(b))_k := \{ g \in G \ : \ d(\gamma(An), x_n o)\leq k \}.$$
This gauge is temperate since the action has bounded exponential growth, hence the theorem follows from the ray approximation criterion 
(Theorem \ref{cor:ray-crit}). 
\end{proof}

Let us remark that one can also obtain the same result by applying the strip approximation, by choosing the following strips. 
For each pair $(\xi, \eta) \in \partial X \times \partial X$ with $\xi \neq \eta$, define as $[\xi, \eta]$ the union of all bi-infinite geodesics 
which connect $\xi$ and $\eta$. Then, define $S(\xi, \eta)$ to be the set 
$$S(\xi, \eta) := \{ g \in G \ : \ d(g, [\xi, \eta]) \leq C \}.$$
If $C$ is large enough, then $\mathbb{P}(e \in S(\xi, \eta)) > 0$. Hence, the result follows from Theorem \ref{thm : Strip Approximation-stop}.

\subsection{Lie groups}

Let $G$ be a connected, semi-simple Lie group with finite center. The Poisson boundary of such groups 
has been characterized by Furstenberg \cite{Furstenberg-semi}. Let us see how ray approximation yields a proof for any closed subgroup 
of a semisimple Lie group (which need not be semisimple). 
We follow the notation from \cite{Ka}.

Let $S = G/K$ be the quotient of $G$ by its maximal compact subgroup $K$. Let us fix the base point $o = K \in S$, 
and consider the visual compactification $\partial S$ of $S$.
Let $\mathfrak{U}^+$ be a dominant Weyl chamber. Then every point of $S$ can be written as $x = k \exp(a) o$ where $k \in K$ and $a \in \overline{\mathfrak{U}^+}$. We will denote as $r(x) = a$ the \emph{radial part} of $x$. 
Then for each $a \in \overline{\mathfrak{U}^+}$, consider the set $\partial S_a$ of limit points of all geodesics of the form $\gamma(t) := g \exp(t a) o$.

Note that if $a \in \mathfrak{U}^+$ belongs to the interior of the chamber, then $\partial S_a$ is isomorphic to the \emph{Furstenberg boundary} $B = G/P$ where $P = MAN$ is a minimal parabolic subgroup. 

\begin{theorem} \label{T:Lie}
Let $\mu$ be a spread-out measure with finite first moment on a closed subgroup of a connected, semi-simple Lie group $G$ with finite center. 
Then there exists $a$ such that 
$$\lim_{n \to \infty} \frac{r(x_n)}{n} = a \in \overline{\mathfrak{U}^+}$$
almost surely. 
Moreover, if $a = 0$, then the Poisson boundary of $(G, \mu)$ is trivial; if $a \neq 0$, then almost every sample path $(x_n)$ converges 
to the visual boundary $\partial S_a$, and $\partial S_a$ with the hitting measure is a model for the Poisson boundary of $(G, \mu)$. 
\end{theorem}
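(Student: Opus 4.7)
The plan is to verify the hypotheses of the ray approximation criterion (Corollary~\ref{cor : ray-all}) with respect to the gauge on $G$ inherited from the Riemannian distance on the symmetric space $S = G/K$. This gauge $\g_n := \{ g \in G \ : \ d(o, go) \leq n \}$ with $o = K$ is sub-additive by the triangle inequality; temperateness when restricted to any closed subgroup of the isometry group follows from the exponential volume growth of balls in $S$ combined with compactness of the stabilizer of $o$, exactly as in the proof of Lemma~\ref{L:autX}.

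The existence of the deterministic limit $a \in \overline{\mathfrak{U}^+}$ should come from the multiplicative ergodic theorem for products of random elements of semisimple Lie groups (Oseledets, Guivarc'h--Raugi, Goldsheid--Margulis). The finite first moment of $\mu$ ensures, after projecting to the chamber, that the relevant components of $r(x_n)$ are subadditive along sample paths, and Kingman's subadditive ergodic theorem together with ergodicity of the shift on $(\Omega, \PP)$ yields a deterministic $a$. In the case $a = 0$, the Cartan decomposition gives $d(x_n o, o) = \|r(x_n)\|$, so $|x_n|_{\g}/n \to 0$ almost surely; applying Corollary~\ref{cor : ray-all} with $(B,\lambda)$ the trivial one-point $\mu$-boundary and $\pi_n \equiv e$ forces the Poisson boundary to be trivial.

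In the case $a \neq 0$, the multiplicative ergodic theorem further yields, for $\PP$-almost every sample path, an element $k_\infty \in K$ (depending measurably on the path) such that the ``$K$-part'' in the decomposition $x_n = k_n \exp(r(x_n)) \cdot k_n'$ converges to $k_\infty$ in the appropriate flag quotient, and the sublinear tracking estimate
$$\frac{1}{n}\, d\bigl(x_n o,\, k_\infty \exp(n a)\, o\bigr) \longrightarrow 0$$
holds almost surely. The assignment $x_\infty^\nu := k_\infty \exp(\mathbb{R}_{\geq 0} a)\, o \in \partial S_a$ is then $G$-equivariant and measurable, so $(\partial S_a, \nu)$ with $\nu$ the hitting measure is a $\mu$-boundary. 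Choosing measurably, for each $b \in \partial S_a$, a group element $\pi_n(b) \in G$ with orbit point close to the point at distance $n\|a\|$ along the geodesic ray from $o$ representing $b$, the tracking estimate above becomes exactly the hypothesis of Corollary~\ref{cor : ray-all}, identifying $(\partial S_a, \nu)$ as the Poisson boundary of $(G, \mu)$.

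The main obstacle is the sublinear tracking step in the Lie-theoretic setting, especially when $a$ lies on a wall of $\overline{\mathfrak{U}^+}$: then the direction of escape only determines a point in the flag variety $G/P_a$ for the parabolic $P_a \supsetneq P$ stabilizing $a$, rather than in the full Furstenberg boundary $G/P$. One must verify that the Goldsheid--Margulis argument (or the CAT(0) version of Karlsson--Margulis) extends to a closed subgroup $H \subseteq G$ that is the closed subgroup generated by $\mathrm{supp}\, \mu$, even when $H$ need not act transitively on $\partial S_a$. Once this is in hand, the ray approximation criterion of this paper does the rest.
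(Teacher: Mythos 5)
Your proposal is correct and takes essentially the same route as the paper: the paper's entire proof consists of citing Kaimanovich's multiplicative ergodic theorem \cite{Ka89} for the almost sure sublinear tracking $d(x_n o, \gamma(An)) = o(n)$ by a geodesic ray in $S = G/K$ (a statement about the measure on the ambient group, which already covers the case where $a$ lies on a wall, since $\partial S_a$ is defined as the limit set of the rays $t \mapsto g\exp(ta)o$), followed by an application of the ray approximation criterion (Corollary \ref{cor:ray-crit}). The obstacles you flag at the end are absorbed into that single citation, together with the observation that the tracking happens in the symmetric space while the criterion is applied to the closed subgroup with its own Haar measure.
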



\begin{proof}
By \cite{Ka89}, for almost every $\x = (x_n)$ there exists a geodesic ray $\gamma : [0, \infty) \to X$ such that 
$d(x_n, \gamma(An)) = o(n)$. We then apply Theorem \ref{cor:ray-crit}.
\end{proof}

\subsection{Groups of isometries of CAT(0) or nonpositively curved spaces}

Another large class of groups we can apply our criterion to is groups of isometries of nonpositively curved spaces in the sense of Busemann, 
which in particular includes the case of CAT(0) spaces. 

Let us review the relevant definitions, following \cite{KM}. Given two points $x, y$ in a metric space $(X, d)$, $z$ is a \emph{midpoint} of $x, y$ if $d(x, z) = d(z,y) = d(x, y)/2$. A metric space is \emph{convex} if every pair of points $x, y$ has a midpoint in $X$, which will be denoted as $m_{xy}$.
Moreover, a space is \emph{uniformly convex} if there is a strictly decreasing, continuous function $g : [0, 1] \to \mathbb{R}$
with $g(0) = 1$ and for any $x, y , w \in X$
$$\frac{d(z, w)}{R} \leq g\left( \frac{d(x, y)}{2R} \right) $$
where $z$ is the midpoint of $x$ and $y$, and $R = \max\{d(x,w), d(y, w) \}$. 
Moreover, a convex metric space is \emph{Busemann nonpositively curved} if for any $x, y, z \in X$ one has 
$$d(m_{xz}, m_{yz}) \leq \frac{d(x, y)}{2}.$$

Let $(X, d)$ be a uniformly convex, complete metric space which satisfies the Busemann non-positive curvature condition, 
and let $G$ be a group of isometries of $X$. In particular, CAT(0) spaces and uniformly convex Banach spaces satisfy this condition.

In this setting, the ray approximation criterion is provided by Karlsson-Margulis:
\begin{theorem}[\cite{KM}] \label{T:KM}
Let $(X, d)$ be a uniformly convex, complete metric space which satisfies the Busemann non-positive curvature condition, 
and let $G$ be a group of isometries of $X$.
Let $\mu$ be a probability measure on $G$ with finite first moment on $X$, and let $o \in X$ be a base point. 
Then there exists $A \geq 0$ such that for almost every sample path $\x = (x_n)$ there exists a unique geodesic ray $\gamma : [0, \infty) \to X$ such that $\gamma(0) = o$ and 
$$\lim_{n \to \infty} \frac{1}{n} d(\gamma(An), x_n o) = 0.$$
\end{theorem}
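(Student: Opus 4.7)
The plan is to proceed in three stages: first extract a deterministic escape rate $A$ from the subadditive structure of the random walk, then find a rich set of ``regular'' times along which the path nearly saturates the triangle inequality, and finally exploit uniform convexity and Busemann nonpositive curvature to build the limit geodesic ray. For Stage~1 (drift), observe that $a_n(\x) := d(o, x_n o)$ is subadditive under the shift $T$ on $(\Omega, \PP)$, since
$$a_{n+m}(\x) \leq d(o, x_n o) + d(x_n o, x_{n+m} o) = a_n(\x) + a_m(T^n \x),$$
and $a_1 \in L^1(\PP)$ by the finite first moment hypothesis; Kingman's subadditive ergodic theorem together with ergodicity of $T$ then yield a deterministic constant $A \geq 0$ with $a_n/n \to A$ almost surely. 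If $A = 0$, any geodesic ray from $o$ satisfies the conclusion, so from now on I assume $A > 0$.

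For Stage~2 (regular times), I would show that for almost every $\x$ and every $\epsilon > 0$ there is an infinite set $N(\x, \epsilon) \subset \mathbb{N}$ of indices $n$ satisfying
$$d(o, x_n o) \geq (A - \epsilon) n \quad \text{and} \quad d(x_k o, x_n o) \geq (A - \epsilon)(n - k) \text{ for all } 0 \leq k \leq n.$$
The first inequality comes directly from Stage~1. The second follows from a more careful application of Kingman to the shifted cocycles $a_m \circ T^k$: the non-negative ``deficit'' $a_n - a_k - d(x_k o, x_n o)$ has vanishing normalized mean, so a Borel--Cantelli / density argument rules out a linear-in-$(n-k)$ deficit on a positive density subset of times, which then supplies the desired regular indices.

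For Stage~3 (extraction of the ray), pick for each regular $n$ a geodesic segment $\gamma_n : [0, a_n] \to X$ from $o$ to $x_n o$. Applied to the triangle with vertices $o$, $x_k o$, $x_n o$ for regular $k \leq n$, the Busemann midpoint condition together with the uniform convexity modulus $g$ forces $\gamma_n(t)$ to lie within $o(n)$ of $\gamma_k(t)$ on any bounded interval: the near-saturation of the triangle inequality from Stage~2 forces the midpoint of $[\gamma_k(t), \gamma_n(t)]$ to nearly realize the maximal distance from $o$, which by uniform convexity forces the two points themselves to be close. Completeness of $X$ then yields a subsequential limit $\gamma : [0, \infty) \to X$, a unit-speed geodesic ray from $o$, and a projection argument (the nearest-point projection onto $\gamma$ is well defined and $1$-Lipschitz by Busemann convexity, continuous by uniform convexity) upgrades the sublinear tracking from the subsequence to the full sequence: $d(\gamma(An), x_n o) = o(n)$. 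The hard part is exactly this convergence: a priori the segments $[o, x_n o]$ could fishtail wildly and admit no convergent subsequence, and only the interplay between uniform convexity and the near-equality $d(o, x_n o) \approx d(o, x_k o) + d(x_k o, x_n o)$ at regular times pins down their initial portions; uniqueness of $\gamma$ follows from the same mechanism applied to any two putative limit rays.
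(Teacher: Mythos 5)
You should first note that the paper does not prove this statement at all: Theorem \ref{T:KM} is imported verbatim from Karlsson--Margulis \cite{KM} as an external ingredient, so there is no in-paper proof to match. Your three-stage architecture (Kingman for the drift, an ergodic lemma producing ``regular'' times of near-saturation of the triangle inequality, then uniform convexity plus Busemann convexity to converge the segments $[o, x_n o]$ to a ray) is indeed the architecture of the actual proof in \cite{KM}, and Stages 1 and 3 are broadly sound in outline.

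The genuine gap is in Stage 2, in two respects. First, the regularity condition you write down, $d(x_k o, x_n o) \geq (A-\epsilon)(n-k)$ for all $k \leq n$, is the wrong inequality: to bound the triangle deficit $\delta_{k,n} = d(o,x_k o) + d(x_k o, x_n o) - d(o, x_n o)$ from above you need an \emph{upper} bound on $d(x_k o, x_n o)$, and a lower bound gives you nothing. The condition that actually works is the Karlsson--Margulis one, $d(o, x_n o) - d(x_k o, x_n o) \geq (A-\epsilon)k$ for all $1 \leq k \leq n$, which combined with $d(o, x_k o) \leq (A+\epsilon)k$ yields $\delta_{k,n} \leq 2\epsilon k$ --- the bound, linear in $k$ with small constant, that feeds into the convexity estimate for the initial segment of $[o, x_n o]$. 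Second, the existence of infinitely many such $n$ is not a ``Borel--Cantelli / density argument'' on a cocycle with vanishing normalized mean: it is the central ergodic-theoretic lemma of \cite{KM}, whose proof is a genuinely delicate covering/selection argument (and the subject of later alternative proofs via horofunctions by Karlsson--Ledrappier); the difficulty is precisely the required uniformity over all $k \leq n$ as $n \to \infty$, which pointwise applications of Kingman to the shifted cocycles do not give. A secondary soft spot in Stage 3: nearest-point projection onto a geodesic in a Busemann nonpositively curved space is well defined by uniform convexity and completeness, but it is not $1$-Lipschitz in general (that is a CAT(0) fact); the argument in \cite{KM} instead estimates $d(\gamma(An), x_n o)$ directly from the convexity modulus, so you should not route the upgrade from the subsequence to the full sequence through a nonexpansive projection.
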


Now, the space $X$ is endowed with its \emph{visual boundary} $X(\infty)$, defined as the set of asymptote classes of geodesic rays starting from $o$. 
Applying the ray approximation criterion yields the following identification of the Poisson boundary: 

\begin{theorem}
Let $G$ be a locally compact group of isometries of a uniformly convex, complete metric space $(X, d)$ which satisfies the Busemann non-positive 
curvature condition. Let $\mu$ be a weakly spread-out measure on $G$ with finite first moment on $X$, and suppose that the action has bounded exponential growth. 
Let $A \geq 0$ denote the drift of the random walk. Then: 
\begin{enumerate}
\item if $A = 0$, then the Poisson boundary of $(G, \mu)$ is trivial; 
\item if $A > 0$, then the random walk converges almost surely to the visual boundary $X(\infty)$, and the visual boundary $(X(\infty), \nu)$ with the 
hitting measure is the Poisson boundary of $(G, \mu)$. 
\end{enumerate}
\end{theorem}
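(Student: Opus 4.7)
The plan is to identify both cases via the ray approximation criterion (Corollary~\ref{cor : ray-all}) applied to the ray produced by the Karlsson-Margulis theorem (Theorem~\ref{T:KM}). The natural gauge $|g|_\g := d(o, go)$ on $G$ is sub-additive by the triangle inequality, temperate because the action has bounded exponential growth, and $\mu$ has finite first moment with respect to it by hypothesis; after restricting to the closed subgroup generated by $\text{supp}\,\mu$, on which $\mu$ is spread-out, the standing assumptions of Corollary~\ref{cor : ray-all} are in force.

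For case (1), I would take the one-point space as the trivial $\mu$-boundary and set $\pi_n := e$. Theorem~\ref{T:KM} produces a ray $\gamma_\x$ from $o$ with $d(\gamma_\x(An), x_n o) = o(n)$ almost surely; when $A = 0$ this reads $d(o, x_n o) = o(n)$, which is exactly the ray approximation bound $|\pi_n(\ast)^{-1} x_n|_\g / n \to 0$. Hence the trivial boundary is the Poisson boundary.

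Case (2) is longer but proceeds along the same lines. Since $A > 0$, the ray $\gamma_\x$ is non-degenerate and $x_n o$ converges in the visual compactification of $X$ to $b := \gamma_\x(\infty) \in X(\infty)$, giving a shift-invariant measurable boundary map $\Omega \to X(\infty)$ whose pushforward $\nu$ realises $(X(\infty),\nu)$ as a $\mu$-boundary. Uniqueness of geodesic rays from $o$ in a complete Busemann nonpositively curved space yields, for every $b \in X(\infty)$, a unique ray $\gamma_b$ with $\gamma_b(\infty) = b$. Setting
$$d_n(b) := \inf_{g \in G} d(g o, \gamma_b(An)),$$
the inequality $d_n(b) \leq d(x_n o, \gamma_b(An))$ combined with Karlsson-Margulis forces $d_n(b)/n \to 0$ for $\nu$-almost every $b$. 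A measurable selection (Kuratowski-Ryll-Nardzewski) then produces measurable maps $\pi_n : X(\infty) \to G$ satisfying $d(\pi_n(b) o, \gamma_b(An)) \leq d_n(b) + 1$, and the triangle inequality gives
$$|\pi_n(b)^{-1} x_n|_\g = d(\pi_n(b) o, x_n o) \leq d_n(b) + 1 + d(\gamma_b(An), x_n o) = o(n)$$
$\PP$-almost surely with $b = x_\infty$. Corollary~\ref{cor : ray-all} then identifies $(X(\infty), \nu)$ as the Poisson boundary.

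The main technical point is the measurable selection of $\pi_n$: one must verify that $b \mapsto \gamma_b$ is Borel (this follows from uniqueness and completeness of the space) and hence that $(g, b) \mapsto d(go, \gamma_b(An))$ is jointly Borel, after which Kuratowski-Ryll-Nardzewski applies to the closed-valued selection. An alternative route would be the strip criterion (Theorem~\ref{thm : Strip Approximation-stop}) using tubular neighbourhoods of bi-infinite geodesics, but the ray approach stays closer to the existing arguments and requires less bookkeeping.
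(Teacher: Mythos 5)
Your proof is correct and rests on the same two pillars as the paper's: the Karlsson--Margulis tracking theorem (Theorem~\ref{T:KM}) supplies the ray, and the ray approximation criterion identifies the boundary. The one genuine difference is which formulation of the criterion you invoke. You use version~4 (Corollary~\ref{cor : ray-all}), which requires actual measurable maps $\pi_n : X(\infty) \to G$, and you therefore have to (a) show that the orbit $Go$ comes within $o(n)$ of $\gamma_b(An)$ --- which you correctly extract from the bound $d_n(b) \leq d(x_n o, \gamma_b(An))$ along a single tracking sample path --- and (b) run a Kuratowski--Ryll-Nardzewski selection, which in turn needs $b \mapsto \gamma_b$ to be Borel (via uniqueness of rays from $o$, itself a small convexity argument in the Busemann NPC setting). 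The paper instead applies version~3 (Corollary~\ref{cor:ray-crit}), defining for each boundary point $b$ the gauges $(\mathcal{G}^n(b))_k := \{ g : d(go, \gamma_b(An)) \leq k \}$, which are uniformly temperate by bounded exponential growth; the condition $\frac{1}{n}|x_n|_{\mathcal{G}^n(x_\infty)} \to 0$ is then literally the Karlsson--Margulis statement. This sidesteps the measurable selection entirely and does not require the orbit to approximate the point $\gamma_b(An)$ --- only the random walk itself has to, which is exactly what Theorem~\ref{T:KM} gives. Both routes are valid; yours carries the extra (flagged, and dischargeable) selection burden, while the gauge formulation is precisely the device the authors built to avoid it. Your treatment of case (1) and of the weakly spread-out hypothesis by passing to $G_\mu$ matches the paper.
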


\begin{proof}
If $A = 0$, then one can just take as $(B, \lambda)$ the trivial one-point compactification, which is always a $\mu$-boundary. 
Then the gauge $\mathcal{G}_k := \{ g \ : \ d(o, go) \leq k \}$
is temperate, hence by Corollary \ref{cor:ray-crit} the Poisson boundary is trivial.

If $A > 0$, then the random walk converges almost surely to $X(\infty)$, and the visual boundary is a $\mu$-boundary. 
For each $b \in X(\infty)$, let us pick the geodesic ray $\gamma$ which joins $o$ and $b$, and let us define the gauge $\g^n(b)$ 
as 
$$(\mathcal{G}^n(b))_k := \{ g \ : \ d(go, \gamma(An)) \leq k \}.$$ 
Since the action has bounded exponential growth, all these gauges are uniformly temperate. Moreover, by Theorem \ref{T:KM}, 
the hypotheses of Corollary \ref{cor:ray-crit} are satisfied, hence $(X(\infty), \nu)$ is the Poisson boundary.
\end{proof}

\subsection{Affine groups} \label{S:affine}

The \emph{real affine group} 
$$\textup{Aff}(\mathbb{R}) := \{ x \mapsto ax + b, a > 0, b \in \mathbb{R}\}$$ 
has been 
considered by Azencott \cite{Azencott}. In that case, the finite first moment condition 
is that $\int \log^+(|a| + |b|) \ d\mu < + \infty$, and one defines 
$$\phi(\mu) := \int \log a \ d\mu.$$
The characterization of the Poisson boundary then follows by Theorem \ref{T:hyperbolic}. Indeed, let us note that $\textup{Aff}(\mathbb{R}) < GL(2,\mathbb{R})$ acts by M\"obius transformations on the upper half plane $\mathbb{H}^2$, and these are isometries for the hyperbolic metric. 

Now, if $\phi(\mu) = 0$, then the rate of escape $A = 0$, hence the Poisson boundary is trivial. 
Otherwise, the Poisson boundary is given by $\partial \mathbb{H}^2$ with the hitting measure. 
Now, if  $\phi(\mu) > 0$, all sample paths converge to $\{ \infty \}$, hence the hitting measure is the point mass at $\infty$, 
thus the Poisson boundary is trivial.
If $\phi(\mu) < 0$, then almost every sample path converges to $\partial \mathbb{H}^2 \setminus \{ \infty \} = \mathbb{R}$, hence 
the Poisson boundary is $(\mathbb{R}, \nu)$. This recovers the result from \cite{Azencott}.

\bigskip

More generally, if $\mathbb{K}$ is a field, then the \emph{affine group} over $\mathbb{K}$ is the group 
$$\textup{Aff}(\mathbb{K}) := \left\{ \left( \begin{array}{ll} a & b \\ 0 & 1 \end{array} \right) \ : \ a \in \mathbb{K}^\star, b \in \mathbb{K} \right\}.$$
A case of interest is when $\mathbb{K}$ is a non-archimedean local field. 
Following \cite{Cartwright-Kaimanovich-Woess}, one notes that the Bruhat-Tits building of $\mathbb{K}$ is a 
homogeneous, locally finite tree of valence $q + 1$, where $q$ is the cardinality of the residue field. Hence, $\textup{Aff}(\mathbb{K})$ 
is a subgroup of the automorphism group of a tree. 

In general, let $\mathbb{T}$ be a homogeneous, locally finite tree, and let us fix an end $\omega$ of $\mathbb{T}$ and a base point $o \in \mathbb{T}$. 
Then, one defines $\textup{Aff}(\mathbb{T})$ as the group of automorphisms of $\mathbb{T}$ which fix $\omega$.
Let $h : \mathbb{T} \to \mathbb{Z}$ be the \emph{Busemann function} associated to $\omega$; this is defined as 
$$h(x) := d(x, c) - d(o, c)$$
where $c$ is the closest point projection of $x$ onto the geodesic ray joining $o$ and $\omega$. 
Now we define the homomorphism $\phi: \textup{Aff}(\mathbb{T}) \to \mathbb{Z}$ as $\phi(g) := h(g o)$. We will denote as $\phi(\mu)$ 
the expected value of $\phi$. 
 
\begin{theorem}
Let $\mu$ be a weakly spread-out probability measure on $\textup{Aff}(\mathbb{T})$ with finite first moment, and suppose that the closed subgroup generated 
by the support of $\mu$ is non-exceptional (i.e., its limit set is uncountable). 
Then if $\phi(\mu) \leq 0$, the Poisson boundary is trivial; 
if $\phi(\mu) > 0$, then the Poisson boundary is $(\partial \mathbb{T}, \nu)$, where $\nu$ is the hitting measure on $\partial \mathbb{T}$. 
\end{theorem}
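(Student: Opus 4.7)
The plan is to leverage the homomorphism $\phi:\textup{Aff}(\mathbb{T})\to\mathbb{Z}$ to control the Busemann drift of the walk and then apply the strip approximation criterion (Theorem~\ref{T:intro-strip}) in the two regimes where $\phi(\mu)$ has a definite sign. Since $|\phi(g)|=|h(go)|\leq d(o,go)$, the hypothesis $\int d(o,go)\,d\mu<\infty$ gives $\int|\phi|\,d\mu<\infty$; because $\phi$ is a homomorphism, $\phi(x_n)=\sum_{i=1}^n\phi(g_i)$ is an i.i.d.\ random walk on $\mathbb{Z}$, and the strong law of large numbers combined with the equivariance $h(gx)=h(x)+\phi(g)$ yields $h(x_no)/n=\phi(x_n)/n\to\phi(\mu)$ almost surely.

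Assume first that $\phi(\mu)>0$, so that $h(x_no)\to+\infty$ linearly and $x_no$ drifts away from $\omega$. Non-exceptionality of the support, combined with standard convergence arguments for random walks on trees in the spirit of \cite{CS}, implies that $x_no$ converges a.s.\ to a random end $b_+\in\partial\mathbb{T}\setminus\{\omega\}$, so that $(\partial\mathbb{T},\nu)$ with the hitting measure is a $\mu$-boundary. To upgrade this to the full Poisson boundary I apply strip approximation with $(B_-,\lambda_-)=(\{\omega\},\delta_\omega)$, which is a $\check\mu$-boundary since $\phi(\check\mu)=-\phi(\mu)<0$ drives the reflected walk to $\omega$, with $(B_+,\lambda_+)=(\partial\mathbb{T},\nu)$, and with strips
\[
S(\omega,b_+):=\{g\in\textup{Aff}(\mathbb{T}):d(go,\gamma_{\omega,b_+})\leq K\}
\]
where $\gamma_{\omega,b_+}$ is the unique bi-infinite geodesic from $\omega$ to $b_+$. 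For $K$ sufficiently large the event $\{e\in S\}$ has positive product-measure, and the intersection $S\cap\mathcal{G}_{|x_n|}$ is contained in the preimage of a $K$-tube about a geodesic segment of length $\leq 2|x_n|$, which meets the ball of radius $|x_n|$ in only $O(|x_n|)=O(n)$ vertices of the locally finite tree; by Lemma~\ref{L:autX} the Haar mass of this set is $O(n)$, so the rate condition $\frac{1}{n}\log^+m(S\cap\mathcal{G}_{|x_n|})\to 0$ holds trivially.

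If $\phi(\mu)<0$ a dual argument works: the reflected walk has positive Busemann drift and converges a.s.\ to $\partial\mathbb{T}\setminus\{\omega\}$, so $(\partial\mathbb{T},\check\nu)$ is a non-trivial $\check\mu$-boundary, while the original walk satisfies $x_no\to\omega$ a.s., making $(\{\omega\},\delta_\omega)$ a $\mu$-boundary. Applying strip approximation with $B_-=\partial\mathbb{T}$ and $B_+=\{\omega\}$ and with the same geodesic tubes identifies $\{\omega\}$ as the Poisson boundary, i.e.\ the Poisson boundary is trivial.

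The case $\phi(\mu)=0$ is the principal technical obstacle: now $\phi(x_n)$ is a recurrent $\mathbb{Z}$-valued random walk, so $h(x_no)$ has no linear drift in either direction and neither $\mu$ nor $\check\mu$ produces a non-trivial geometric boundary to feed into strip approximation. Convergence of $x_no$ to any end $\xi\in\partial\mathbb{T}\setminus\{\omega\}$ would force $h(x_no)\to+\infty$, which is incompatible with the recurrence of $\phi(x_n)$, so no non-trivial geometric limit can arise. Triviality of the Poisson boundary in this recurrent regime follows along the lines of Azencott's argument for $\textup{Aff}(\mathbb{R})$ and is carried out for trees in \cite{Cartwright-Kaimanovich-Woess}.
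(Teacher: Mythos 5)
Your treatment of the two drifting regimes is correct and takes a genuinely different route from the paper: the paper disposes of all three cases in one line by quoting the sublinear tracking estimate of \cite{Cartwright-Kaimanovich-Woess}, equation (3.10), and feeding it into the ray criterion (Corollary \ref{cor : ray-all}), whereas you verify the strip criterion (Theorem \ref{thm : Strip Approximation-stop}) with $K$-tubes around the geodesics from $\omega$ to $b_+$, using the fact that such a tube meets $\g_{|x_n|}$ in $O(|x_n|)$ cosets of the compact vertex stabilizer. That computation is sound, and the strip route has the advantage of not needing any tracking estimate in the drifting cases. (Two small remarks: the appeal to \cite{CS} is off the mark, since $\textup{Aff}(\mathbb{T})$ is amenable; convergence to an end when $h(x_no)\to+\infty$ linearly follows instead from $d(x_no,x_{n+1}o)=o(n)$ together with $d(x,y)=h(x)+h(y)-2h(x\wedge_\omega y)$. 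Also, non-exceptionality is what guarantees the hitting measure is not a point mass, so that the answer in the case $\phi(\mu)>0$ is genuinely non-trivial.)

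The gap is the case $\phi(\mu)=0$. What you prove there is only that $x_no$ has no geometric limit, and the absence of a geometric limit says nothing about the Poisson boundary: identifying the latter as trivial is precisely the kind of statement the entropy, ray and strip criteria exist to establish. Deferring to \cite{Cartwright-Kaimanovich-Woess} is also delicate, since (as the introduction of this paper notes) the application to affine groups of trees in that reference rests on a ray criterion announced there without proof, so the citation risks circularity. In fact the zero-drift case is not an obstacle for your own machinery: with both boundaries trivial, $G$-equivariance forces $S\equiv G$, and the strip condition reduces to $\frac1n\log m(\g_{|x_n|})\to0$, i.e.\ to zero rate of escape (equivalently, apply Corollary \ref{cor:ray-crit} with the one-point boundary). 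So the missing step is the claim $d(o,x_no)/n\to 0$ a.s.\ when $\phi(\mu)=0$, which is short: writing $c_n$ for the confluent of $o$ and $x_no$ towards $\omega$, one has $d(o,x_no)=h(x_no)-2h(c_n)$; the geodesic $[o,x_no]$ is covered by the segments $[x_{k-1}o,x_ko]$, whence
$$-h(c_n)\ \leq\ \max_{k\leq n}\bigl(-h(x_ko)\bigr)+\max_{k\leq n}d(o,g_ko),$$
and both maxima are $o(n)$ almost surely, the first because $h(x_ko)$ is a centered integrable random walk on $\mathbb{Z}$, the second by Borel--Cantelli for i.i.d.\ integrable increments. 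With this supplement your argument is complete.
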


\begin{proof}
Ray approximation holds, by \cite{Cartwright-Kaimanovich-Woess}, equation (3.10). 
\end{proof}

\subsection{Automorphisms of Diestel-Leader graphs} \label{S:DL}

Diestel-Leader graphs \cite{DL} have been introduced as examples of vertex-transitive graphs which are not quasi-isometric 
to any Cayley graph, as an answer to a question of Woess (see \cite{Woess-treebolic}).

Fix two integers $p, q \geq 2$, and consider the regular trees $\mathbb{T}_p$ and $\mathbb{T}_q$ of degrees $p$ and $q$. 
Fix two ends $\omega_1 \in \partial \mathbb{T}_p$ and $\omega_2 \in \partial \mathbb{T}_q$. The choice of an end $\omega_i$ 
defines a \emph{Busemann function} $h_i : \mathbb{T}_i \to \mathbb{Z}$. 

The Diestel-Leader graph $DL_{p,q}$ is the graph with vertex set
$$DL_{p, q} := \{ (z_1, z_2) \in \mathbb{T}_p \times \mathbb{T}_q \ : \ h_1(z_1) + h_2(z_2) = 0 \}.$$ 
and edges $(z_1, z_2) \sim (y_1, y_2)$ if $z_1 \sim y_1$ and $z_2 \sim y_2$.
Let $d$ denote the graph metric on $DL_{p, q}$, and pick a base point $o = (o_1, o_2)$. 

If $p \neq q$, $DL_{p, q}$ is not quasi-isometric to any Cayley graph \cite{Eskin-F-W}, while $DL_{p, p}$ is a Cayley graph 
of the lamplighter group $\mathbb{Z}_p \wr \mathbb{Z}$ \cite{Woess-trees}. 

Consider the group 
$$G_{p,q} = \{ (g^1, g^2) \in \textup{Aff}(\mathbb{T}_p) \times \textup{Aff}(\mathbb{T}_q) \ : \ h_1(g^1 o_1) + h_2(g^2 o_2) = 0 \}$$
which acts by isometries on $DL_{p, q}$. Note that when $p\not = q$, then $G_{p,q}$ is the full automorphism group of $DL_{p,q}$, 
while $G_{p,p}$ is a subgroup of index 2 in the full automorphism group of $DL_{p,p}$, see \cite{Woess-trees}.
Define the \emph{vertical drift} as 
$$V := \int_G h_1(g^1 o_1) \ d\mu(g),$$
which is finite if $\mu$ has finite first moment.

\begin{theorem} \label{T:DL}
Let $G$ be a closed subgroup of the group $G_{p,q}$ of automorphisms of the Diestel-Leader graph $DL_{p, q}$, and let $\mu$ be a spread-out probability measure on $G$
with finite first moment. Then: 
\begin{enumerate}
\item if $V = 0$, then the Poisson boundary of $(G, \mu)$ is trivial; 
\item if $V > 0$, then the Poisson boundary of $(G, \mu)$ is $\partial \mathbb{T}_p \times \{ \omega_2 \}$ with the hitting measure; 
\item  if $V < 0$, then the Poisson boundary of $(G, \mu)$ is $\{\omega_1 \} \times \partial \mathbb{T}_q$ with the hitting measure.
\end{enumerate}

\end{theorem}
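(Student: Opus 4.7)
My plan is to apply the ray approximation criterion (Corollary \ref{cor : ray-all}) in cases (2) and (3), reducing via the factor projections to the affine-tree theorem of the preceding subsection; case (1) will require a separate entropy-based argument.

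\textbf{Setup.} Since $G$ acts by isometries on the locally finite graph $DL_{p,q}$, Lemma \ref{L:autX} supplies a sub-additive, temperate gauge $|g|_\g := d_{DL}(o, g o)$ with respect to which $\mu$ has finite first moment. Let $\pi^i : G \to \textup{Aff}(\mathbb{T}_i)$ be the factor projections and $\mu_i := (\pi^i)_* \mu$; the constraint $h_1 + h_2 = 0$ together with linearity of expectation gives $\phi_1(\mu_1) = V$ and $\phi_2(\mu_2) = -V$. A short step-counting argument (an edge of $DL_{p,q}$ shifts $h_1$ and $h_2$ by $\pm 1$ in opposite senses, and the parities of $d_1$ and $d_2$ automatically agree on $DL_{p,q}$) yields the key metric identity
$$d_{DL}\bigl((x_1, x_2), (y_1, y_2)\bigr) = \max\bigl(d_1(x_1, y_1), d_2(x_2, y_2)\bigr).$$

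\textbf{Cases (2) and (3).} By symmetry I treat $V > 0$. The affine-tree theorem applied to each projection gives $x_n^1 o_1 \to \xi \in \partial \mathbb{T}_p$ almost surely with $\xi \sim \nu_1$ (the projected Poisson boundary), and $x_n^2 o_2 \to \omega_2$ almost surely with trivial projected Poisson boundary. Hence $(B, \lambda) := (\partial \mathbb{T}_p \times \{\omega_2\}, \nu_1 \otimes \delta_{\omega_2})$ is a $\mu$-boundary. For $b = (\xi, \omega_2)$ I define the geodesic ray $\gamma^b(t) := (\alpha_1(t), \alpha_2(t))$ in $DL_{p,q}$, where $\alpha_1$ is the $\mathbb{T}_p$-geodesic from $o_1$ to $\xi$ parameterized by $h_1 = t$ and $\alpha_2$ is the $\mathbb{T}_q$-geodesic from $o_2$ to $\omega_2$ parameterized by $h_2 = -t$. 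Factor-wise ray approximation in each tree combined with the metric identity yields
$$\frac{d_{DL}(x_n o, \gamma^{x_\infty^\lambda}(Vn))}{n} \to 0 \quad \text{almost surely.}$$
I then define $\pi_n : B \to G$ as a measurable nearest-orbit-point selection (since $G \cdot o \subseteq DL_{p,q}$ is countable) to $\gamma^b(Vn)$; the walk $x_n$ itself exhibits a group element whose orbit point is within $o(n)$ of $\gamma^{x_\infty^\lambda}(Vn)$, so the nearest orbit element is also within $o(n)$. Hence $d_{DL}(\pi_n(x_\infty^\lambda) o, x_n o) = o(n)$ in probability, and Corollary \ref{cor : ray-all} identifies $(B, \lambda)$ as the Poisson boundary.

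\textbf{Case (1) and main obstacle.} For $V = 0$ both projected walks have trivial Poisson boundary, hence zero Furstenberg entropy. I will conclude by showing that $h(\nu_G) = 0$ and invoking Theorem \ref{thm : De ma}. The hard part is establishing this entropy vanishing: a closed subgroup of a product may a priori admit non-trivial Poisson boundary even when both projections do not, so the desired bound $h(\nu_G) \leq h(\nu_1) + h(\nu_2) = 0$ does not follow formally from subadditivity. My plan is to exploit the rigid coupling imposed by the constraint $h_1 + h_2 = 0$, which pins the two factors together at the Busemann level, together with the weak Shannon theorem (Theorem \ref{thm : bounded density}) applied after the stopping-time reduction of Section \ref{sec : Stopping time trick}, to bound $\lim_n I(x_1, x_n)$ in terms of the vanishing projected mutual informations $\lim_n I(x_1^i, x_n^i) = 0$.
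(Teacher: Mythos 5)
Your treatment of cases (2) and (3) follows the same route as the paper: factor-wise sublinear tracking in each tree combined into a ray in $DL_{p,q}$, then Corollary \ref{cor : ray-all}. Two remarks there. First, your ``key metric identity'' $d_{DL}((x_1,x_2),(y_1,y_2)) = \max(d_1,d_2)$ is false: the correct formula (Bertacchi, quoted in the paper) is $d_{DL} = d_1 + d_2 - |h_1(x_1)-h_1(y_1)|$. For example, if $x_1\neq y_1$ and $x_2\neq y_2$ both lie at the same horocyclic levels as their targets with $d_1=d_2=2$, then $d_{DL}=4$, not $2$; the pairing constraint (each $DL$-edge moves one coordinate up and the other down) forces the extra steps. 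Your argument survives only because all you actually need is the inequality $d_{DL}\leq d_1+d_2$, which does hold; but the identity as stated should be removed. Second, your construction of $\pi_n$ as a nearest-orbit-point selection to $\gamma^b(Vn)$ is fine and is in fact slightly more careful than the paper about why a group element near the ray exists at all.

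The genuine gap is case (1). You explicitly leave it as a ``plan'' to bound $\lim_n I(x_1,x_n)$ by the projected mutual informations, and you correctly observe that $h(\nu_G)\leq h(\nu_1)+h(\nu_2)$ does not follow formally for a closed subgroup of a product; but you do not supply the missing argument, so the proof is incomplete as written. The paper's resolution is much more elementary and stays entirely within the ray criterion: when $V=0$, the Busemann coordinate $h_1(x_n^1 o_1)=\sum_{k\leq n}\phi_1(g_k)$ is a mean-zero sum of i.i.d.\ integrable increments, so it is $o(n)$ (as is its running extremum), which forces $d_i(x_n^i o_i, o_i)=o(n)$ in each tree and hence $d_{DL}(x_n o, o)=o(n)$. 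One then applies Corollary \ref{cor : ray-all} with the trivial one-point $\mu$-boundary and $\pi_n\equiv e$, concluding triviality of the Poisson boundary directly. I recommend you replace your entropy-subadditivity sketch with this sublinear-tracking-of-the-basepoint argument; without it, case (1) is not proved.
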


\begin{proof}
By Lemma \ref{L:autX}, the gauge induced on $G$ by the metric $d$ is temperate.
Suppose $V > 0$. Let $(x_n)$ be a sample path, and denote as $x_n = (x_n^1, x_n^2)$ the two components, with $x_n^1 \in \textup{Aff}(\mathbb{T}_p)$ and $x_n^2 \in \textup{Aff}(\mathbb{T}_q)$. Since trees are $\delta$-hyperbolic, by (\cite{Ka}, Theorem 7.2) for almost every $(x_n)$ there exists $\xi_1 \in \partial \mathbb{T}_p$ such that $x_n^1 o_1 \to \xi_1$, and moreover there exists a geodesic ray $\gamma_1 : [0, \infty) \to \mathbb{T}_p$ between $o_1$ and $\xi_1$ such that 
$$\frac{d_1(x_n^1 o_1, \gamma_1(Vn))}{n} \to 0.$$ 
Similarly, almost surely one also has a geodesic ray $\gamma_2$ between $o_2$ and $\omega_2$ in $\mathbb{T}_q$ so that 
$\frac{d_2(x_n^2 o_2, \gamma_2(Vn))}{n} \to 0$.
Recall that the distance on $DL_{p, q}$ is given by (\cite{Bertacchi})
$$d((z_1, z_2), (y_1, y_2)) = d_1(z_1, y_1) + d_2(z_2, y_2) - |h_1(y_1) - h_1(z_1)|.$$
This means that, by choosing $\gamma_n := (\gamma_1(\lfloor Vn \rfloor), \gamma_2(\lfloor Vn \rfloor))$, we have 
$$\frac{d(x_n o, \gamma_n)}{n} \to 0.$$
The claim then follows by ray approximation (Corollary \ref{cor : ray-all}). 
The case $V < 0$ is completely analogous, reversing the roles of $\mathbb{T}_p$ and $\mathbb{T}_q$.
If $V = 0$, then a.e. sample path sublinearly tracks the point $o$, hence the Poisson boundary is trivial again by Corollary \ref{cor : ray-all}.
\end{proof}

\subsection{Horocylic products} \label{S:horo}

A generalization of the construction of Diestel-Leader graphs is the notion of \emph{horocylic product} of metric spaces,
due to Woess (see \cite{Woess-treebolic} for a survey). 

Let $X_1, \dots, X_k$ be $k$ proper, geodesic, $\delta$-hyperbolic spaces, and pick for each $i$ a boundary point $\omega_i \in \partial X_i$. 
Then denote as $h_i : X_i \to \mathbb{R}$ the Busemann function associated to $\omega_i$. 
The \emph{affine group} of $(X_i, \omega_i)$ is defined as 
$$\textup{Aff}(X_i) := \{ g \in \textup{Isom}(X_i) \ : \ g\omega_i = \omega_i \}.$$
We define the \emph{horocylic product} as the 
space 
$$ \mathfrak{X} = \prod_{\mathfrak{h}} X_i  := \{ (z_1, z_2, \dots, z_k) \in X_1 \times \dots \times X_k \ : \ \sum_{i=1}^k h_i(z_i) = 0 \}$$
As explained in Woess \cite{Woess-treebolic}, horocylic products include the following cases: 
\begin{enumerate}
\item
if $X_1 = \mathbb{T}_p$, $X_2 = \mathbb{T}_q$, you get the Diestel-Leader graph $DL_{p, q}$. 
\item
If $X_1 = \mathbb{H}(p)$ is the rescaled hyperbolic plane with curvature $- p^2$ and $X_2 = \mathbb{T}_q$, one gets the \emph{treebolic spaces}, on which Baumslag-Solitar groups 
act cocompactly. 
\item
if $X_1 = \mathbb{H}(p)$ and $X_2 = \mathbb{H}(q)$, then the horocylic product is the \emph{Sol-group}  
$$S(p ,q) := \left\{ \left( \begin{array}{lll} e^{pc} & a & 0 \\ 0 & 1 & 0 \\ 0 & b & e^{-qc} \end{array}\right) \ : \ a,b,c \in \mathbb{R}  \right\}.$$ 
This is a connected, solvable, Lie group (in particular, it is not semisimple).
\end{enumerate}
Consider the action on $\mathfrak{X}$ of the group 
$$\textup{Aut}(\mathfrak{X}) := \{ (g_1, \dots, g_k) \in \textup{Aff}(X_1) \times \dots \times \textup{Aff}(X_k) \ : \ \sum_{i=1}^k h_i(g_i o_i) = 0 \}.$$
For instance, when $X_i = \mathbb{T}_p$ for all $i$, then $\textup{Aut}(\mathfrak{X})$ is a ``higher rank lamplighter group" as studied 
in \cite{B-N-Woess}. 
We define a metric $d$ on $\mathfrak{X}$ to be \emph{compatible} if $d((z_1, \dots,z_k), (y_1, \dots, y_k)) \leq \sum_{i =1}^k d_i(z_i, y_i)$.
An example is given by the metric 
$$d((z_1, \dots, z_k), (y_1, \dots, y_k)) = \sum_{i = 1}^k d_i(z_i, y_i) - \sum_{i =1}^{k-1} |h_i(z_i) - h_i(y_i)|,$$
but also the usual metrics on treebolic spaces and on Sol are compatible (\cite{Woess-trees}, Lemmas 2.11 and 2.18).
For each $i$, let us define \emph{the drift in direction} $i$ as 
$$V_i := \int_G h_i( g_i o) \ d\mu(g).$$
The same proof as in Theorem \ref{T:DL}, but with $k$ components, yields:

\begin{theorem} \label{T:horocyclic}
Let $G$ be a closed group of isometries of a horocylic product $\mathfrak{X}$ of proper, $\delta$-hyperbolic metric spaces 
with a compatible metric, suppose that the action has bounded exponential growth, 
and let $\mu$ be a weakly spread-out measure on $G$ with finite first moment. 
Then the Poisson boundary of $(G, \mu)$ is 
$$\prod_{V_i > 0} \partial X_i \times \prod_{V_i < 0} \{ \omega_i \}$$ 
with the corresponding hitting measure, where $\partial X_i$ is the Gromov boundary of $X_i$.
\end{theorem}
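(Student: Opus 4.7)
The approach follows the proof of Theorem \ref{T:DL} with $k$ factors in place of two, and the conclusion will come from the ray approximation criterion, Corollary \ref{cor : ray-all}. Bounded exponential growth of the action on $\mathfrak{X}$ makes the gauge $\g_n := \{g \in G : d(o, go) \leq n\}$ temperate, and it is sub-additive by the triangle inequality. The compatibility of $d$, together with the fact that the $G$-action is diagonal on the factors, gives (for the usual compatible metrics in the applications) $d_i(o_i, g_i o_i) \leq d(o, go)$, so the pushforward $\mu_i := (p_i)_* \mu$ under the coordinate projection $p_i : G \to \textup{Aff}(X_i)$ has finite first moment for $d_i$.

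In each proper, $\delta$-hyperbolic factor $X_i$, Theorem 7.2 of \cite{Ka} provides, $\PP$-almost surely, a boundary point $\xi_i \in \partial X_i$ and a geodesic ray $\gamma_i$ from $o_i$ to $\xi_i$ with
$$\frac{d_i(x_n^i o_i, \gamma_i(|V_i| n))}{n} \to 0.$$
Birkhoff applied to the cocycle $n \mapsto h_i(x_n^i o_i)$ yields $h_i(x_n^i o_i)/n \to V_i$, which forces $\xi_i = \omega_i$ when $V_i < 0$, $\xi_i \neq \omega_i$ when $V_i > 0$, and sublinear displacement in $X_i$ when $V_i = 0$. Setting $y_n := (\gamma_1(|V_1| n), \dots, \gamma_k(|V_k| n))$, the horocyclic constraint $\sum_i h_i \equiv 0$ on sample paths forces $\sum_i V_i = 0$, so $y_n$ is within $o(n)$ of a point of $\mathfrak{X}$, and the compatibility inequality gives
$$\frac{d(x_n o, y_n)}{n} \leq \sum_{i=1}^k \frac{d_i(x_n^i o_i, \gamma_i(|V_i| n))}{n} \to 0.$$
The measurable, $G$-equivariant map $\x \mapsto (\xi_i(\x))_i$ then presents $B := \prod_{V_i > 0} \partial X_i \times \prod_{V_i < 0} \{\omega_i\}$, equipped with the hitting measure $\lambda$, as a $\mu$-boundary.

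To conclude via Corollary \ref{cor : ray-all}, one needs a measurable map $\pi_n : B \to G$ with $d(\pi_n(b) o, y_n) = o(n)$ in probability: the natural construction fixes, for each $b = (\xi_i)_i \in B$, a choice of rays $\widetilde{\gamma}_i$ from $o_i$ to $\xi_i$, forms the product point $\widetilde{y}_n(b) := (\widetilde{\gamma}_1(|V_1|n), \dots, \widetilde{\gamma}_k(|V_k|n))$, and selects $\pi_n(b) \in G$ whose orbit point is closest to $\widetilde{y}_n(b)$. The main obstacle is this last selection: one needs the orbit $Go$ to be $o(n)$-close to $\widetilde{y}_n(b)$ for $\lambda$-a.e. $b$, which in the concrete applications (Diestel-Leader graphs, treebolic spaces, Sol-groups) follows from cocompactness of the action but in general requires a more careful use of bounded exponential growth together with a measurable selection theorem; alternatively, one may bypass this obstacle by invoking the strip approximation criterion (Theorem \ref{thm : Strip Approximation-stop}) with strips of the form $S(b_-, b_+) := \{g \in G : d(go, [b_-, b_+]) \leq C\}$ built from bi-infinite geodesics in each factor.
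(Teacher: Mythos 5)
Your proposal follows essentially the same route as the paper, whose proof of Theorem \ref{T:horocyclic} is literally ``the same proof as Theorem \ref{T:DL}, but with $k$ components'': temperate gauge from bounded exponential growth, sublinear geodesic tracking in each $\delta$-hyperbolic factor via (\cite{Ka}, Theorem 7.2), the sign of each drift $V_i$ deciding between $\partial X_i$ and $\omega_i$, the compatible metric summing the factor-wise errors, and ray approximation to conclude. Your closing worry about realizing $\pi_n(b)$ as a group element is legitimate but is resolved by noting that the sample path itself supplies orbit points within $o(n)$ of the tracking point (or by centering uniformly temperate gauges at points of $\mathfrak{X}$ as in Theorem \ref{T:hyperbolic}), so it does not constitute a gap relative to the paper's argument.
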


The Theorem extends the result of \cite{B-N-Woess} on the horocylic products of $k$ trees, and even in that case 
the measure $\mu$ is no longer constrained to be invariant by a point stabilizer.
Moreover, it also yields the Poisson boundary of groups of isometries of treebolic spaces (including the Baumslag-Solitar groups $BS(1, p)$) and Sol-groups $S(p, q)$.

\section{Appendix}

\subsection{Proof of the Entropy criterion (Theorem~\ref{thm : general An sets}) } 
\begin{proof}
For a fixed $\gamma \in W$ and $\epsilon > 0$, let us define the set
$$
B^\gamma_n=\left\{g\in A^\gamma_{n,\epsilon} : \rho_n(g) \frac{d g \lambda}{d \lambda}( \gamma ) \leq e^{-4n\epsilon} \right\}.
$$
We have 
$$
\PP^\gamma  \{ \x : x_n \in B_n^\gamma\}=\int _{B^\gamma_n} \rho_n(g) \frac{d g \lambda}{d \lambda} (\gamma) dm(g) \leq m(B^\gamma_n) e^{-4n\epsilon} \leq m(A^\gamma_{n,\epsilon}) e^{-4n\epsilon} .
$$
We have $\limsup_n\frac1n \log m(A^\gamma_{n,\epsilon})<\epsilon$, therefore,  there  exists $N$ such that 
$$
\sup_{n\geq N}\frac{1}{n}\log m(A^\gamma_{n,\epsilon})<2\epsilon,
$$
we can conclude that for any $n \geq N$ 
$$
\PP^\gamma \{ \x : x_n \in B_n^\gamma \} \leq e^{-2n \epsilon}.
$$
Now, let us define
$$
C^\gamma_n:= A^\gamma_{n,\epsilon} \setminus B^\gamma_{n}  = \left\{g\in A^\gamma_{n,\epsilon} : \rho_n(g) \frac{d g \lambda}{d \lambda} (\gamma) \geq  e^{-4n\epsilon}    \right\}.
$$   
Combining this with hypothesis (1), we get
$$\limsup_n\PP^\gamma \{\x : x_n\in C^\gamma_n \}  =  \limsup_n\PP^\gamma \{\x : x_n\in A^\gamma_{n, \epsilon} \}  > 0$$
hence if we set $\Omega_\gamma := \{ \x \in \Omega : x_\infty^\lambda = \gamma \textup{ and }x_n \in C_n^\gamma \mbox{ infinitely often }\} $ then 
$$\PP^\gamma(\Omega_\gamma) = \PP^\gamma \{ \x : x_n \in C_n^\gamma \mbox{ infinitely often }\} > 0.$$
Now, if a sample path $\x = (x_n)$ is such that $x_n$ belongs to $C^\gamma_n$, then by definition
$$
\log \rho_n(x_n) +\log \frac{d x_n \lambda}{d \lambda} (\gamma) \geq -4n\epsilon.
$$
So if $\x \in \Omega_\gamma$ then 
$$\liminf_n \left(  -\frac{1}{n}\log \rho_n(x_n) \right) + \liminf_n \left( - \frac{1}{n} \log \frac{d x_n \lambda}{d \lambda} (\gamma) \right)  \leq 4 \epsilon.$$ 
Now, by Theorem~\ref{thm : bounded density} and Theorem \ref{theo : Furstenberg-entropy}, there exists a set $S \subseteq \Omega$ 
with $\PP(S) = 1$ such that for $\x \in S$ 
$$\liminf_n \left(  -\frac{1}{n}\log \rho_n(x_n) \right) + \liminf_n \left( - \frac{1}{n} \log \frac{d x_n \lambda}{d \lambda} (x_\infty^\lambda) \right)  = h(\nu) - h(\lambda).$$
Since $\PP(\bigcup_{\gamma \in W} \Omega_\gamma) = \int_{\gamma \in W} \mathbb{P}^\gamma(\Omega_\gamma) \ d\lambda(\gamma) > 0$, then 
there exists $\gamma \in W$ and $\x \in S \cap \Omega_\gamma$, thus by comparing the two previous equations and noting that $\gamma = x_\infty^\lambda$ we get 
$$ h(\nu)-h(\lambda) \leq 4\epsilon, $$ 
and by choosing an arbitrary $\epsilon$ we obtain $h( \nu )= h(\lambda)$.
\end{proof}

\subsection{Proof of the Strip Criterion (Theorem~\ref{thm : Strip Approximation-stop})}
\begin{proof}
Since $\mu$ is spread-out, without loss of generality we can assume that $\mu$ is not singular with respect to the Haar measure. Therefore, there exist finite measures $\mu_1$ and $\mu_2$ such that $\mu=\mu_1+\mu_2$ and $\mu_1$ is absolutely continuous w.r.t the Haar measure. Let $\rho_1:= \frac{d \mu_1}{dm}$.

Let us denote as $\xi_+ : \Omega \to B_+$ and $\xi_- : \Omega \to B_-$ the boundary maps.
Then we have  
$$
S(\xi_+(\x), \xi_-(\x))=\bigcup_{j=1}^\infty \left(S(\xi_+(\x), \xi_-(\x))\cap\{g \in G : \rho_1(g) \leq j\}\right);
$$
since $\overline{\PP}(e \in S(\xi_+(\x), \xi_-(\x)))= q >0$, there exists $j>0$ such that 
$$
\overline{\PP}(e \in S(\xi_+(\x), \xi_-(\x)) \textup{ and }\rho_1 (g_1) \leq j) = q_1 > 0.
$$
Let $L=\{g \in G : \rho_1(g) \leq j\}$ and define $C_L := \{ (g_n) \in G^\mathbb{Z} \ : \ g_1 \in L \} \subseteq G^\mathbb{Z}$ the set of sequences whose first entry lies in $L$. If $T$ denotes the left shift on the space of increments, then the 
return time is $\tau(\x) := \inf \{ k \geq 1 \ : \ T^k(\x) \in C_L \}$ and we define $\mathfrak{T}(\x) := T^{\tau(\x)}(\x)$ the induced transformation.
Then the composition is given by $\mathfrak{T}^n(\x) = T^{\tau_n(\x)}(\x)$ where 
$$\tau_n(\x) = \sum_{k = 0}^{n-1} \tau(\mathfrak{T}^k(\x)).$$
Then $\mathfrak{T}\vert_{C_L}$ is measure preserving, hence for any $n$
$$\overline{\PP}\vert_{C_L}(x_{\tau_n} \in S(\xi_+, \xi_-)) = \overline{\PP}\vert_{C_L}(e \in S(\xi_+, \xi_-)) $$
which implies 
$$\overline{\PP}(x_{\tau_n} \in S(\xi_+, \xi_-)) \geq \overline{\PP}(e \in S(\xi_+, \xi_-) \textup{ and }g_1 \in L) = q_1 > 0 $$
hence 
\begin{equation} \label{E:S1}
\int \widetilde{p}_n^{b^+}(S(b_-, b_+)) \ d\lambda_-(b_-) d\lambda_+(b_+) = p > 0
\end{equation}
where $\widetilde{p}_n^{b^+}$ is the $n^{th}$-step distribution of the conditional random walk with boundary point $b^+$.
Now, we define
$$K_{n} := \min \{ k \geq 1 \ : \ \theta^{*n}(\mathcal{G}_{k}) \geq 1 - p/2 \}$$
and then 
\begin{equation} \label{E:S2}
\int \widetilde{p}_n^{b^+}(\mathcal{G}_{K_n})\ d\lambda_+(b_+) = \mathbb{P}(|x_{\tau_n}| \leq K_n) \geq 1 -p/2.
\end{equation}
Now, by \eqref{E:S1}
$$\int \widetilde{p}_n^{b^+}[S(b_-, b_+) \cap \mathcal{G}_{K_n}]\ d\lambda_-(b_-) d \lambda_+(b_+) \geq p/2$$
hence 
\begin{equation} \label{E:p5}
\lambda_- \otimes \lambda_+[(b_-, b_+) \ : \ \widetilde{p}_n^{b^+}[S(b_-, b_+) \cap \mathcal{G}_{K_{n}}] \geq p/4] \geq p/4.
\end{equation}
On the other hand, by minimality in the definition of $K_n$
$$\mathbb{P}(|x_{\tau_n}| < K_n) < 1 -p/2$$
hence 
\begin{equation} \label{E:tau-Kn}
\mathbb{P}(|x_{\tau_n}| \geq K_n) > \frac{p}{2}.
\end{equation}
Now, pick a generic pair $(b_-, b_+) \in B_- \times B_+$, and fix $\epsilon > 0$. Since $\frac{1}{n} \log^+ m( S(b_-, b_+) \cap \mathcal{G}_{|x_n|} ) \to 0$ in probability, 
then for any $\epsilon > 0$ and any $L > 0$
$$\mathbb{P}\left( m( S(b_-, b_+) \cap \mathcal{G}_{|x_{Ln}|} ) \leq e^{\epsilon n} \right) \to 1$$
as $n \to \infty$. Now, since the random walk has finite first moment, $\frac{|x_n|}{n} \to \ell$ almost surely, hence also $\frac{|x_{\tau_n}|}{\tau_n} \to \ell$ almost surely. 
Moreover, since the stopping time has finite first moment, $\frac{|x_n|}{|x_{\tau_n}|} \to \ell'$ almost surely, hence there exists $\ell'' > 0$  such that 
$$\PP(|x_{\tau_n}| \leq |x_{\ell'' n}|) \to 1$$
hence for a.e. pair $(b_-, b_+)$ one has 
$$\limsup_n \frac{1}{n} \log^+ m(S(b_-, b_+) \cap \g_{|x_{\tau_n}|}) = 0$$
in $\PP$-probability. 
Hence, by \eqref{E:tau-Kn} there exists $N$ such that for every $n>N$
$$
\mathbb{P}\left( \frac{1}{n} \log m(S(b_-, b_+) \cap \mathcal{G}_{|x_{\tau_n}|})  \leq \epsilon  \textup{ and }|x_{\tau_n}| \geq K_n \right) >  \frac{p}{4},
$$
which implies that $\limsup_n   \frac{1}{n} \log m( S(b_-, b_+) \cap \mathcal{G}_{K_n} ) \leq \epsilon $, hence
$$ \limsup_n   \frac{1}{n} \log^+ m( S(b_-, b_+) \cap \mathcal{G}_{K_n} ) = 0.$$

By Egorov's lemma there exists a set $Z \subseteq B_- \times B_+$ and a sequence $\phi_n$ with $\log^+ \phi_n/n \to 0$ such that
$\lambda_- \otimes \lambda_+(Z) \geq 1 - p/8$ and $m(S(b_-, b_+) \cap \mathcal{G}_{K_n}) \leq \phi_n$ for all $(b_-, b_+) \in Z$.

Then by taking $X_n$ as the intersection of $Z$ and the set from eq. \eqref{E:p5}  we have $\lambda_- \otimes \lambda_+(X_n) \geq p/8$, 
$\widetilde{p}_n^{b^+}[S(b_-, b_+) \cap \mathcal{G}_{K_n}] \geq p/4$ and $m(S(b_-, b_+) \cap \mathcal{G}_{K_n}) \leq \phi_n$
for all $(b_-, b_+) \in X_n$. 
Now, let us define 
$$V := \bigcap_n \bigcup_{m \geq n} X_m$$
and let $W$ be the projection of $V$ to $B_+$. Then for a.e. $b_+ \in W$ there exists $b_-$ such that $(b_-, b_+) \in X_n$ for infinitely many $n$. 
Define $A_{n, \epsilon}^{b^+} := S(b_-, b_+) \cap \mathcal{G}_{K_n}$. Then by construction $V \subseteq Z$, hence 
$m(A_{n, \epsilon}^{b^+}) \leq \phi_n$ for any $n$, so 
$$\limsup_n \frac{1}{n} \log m(A_{n, \epsilon}^{b^+}) \leq 0$$
and $\widetilde{p}_n^{b^+}(A_{n, \epsilon}^{b+}) \geq p/4$ for infinitely many values of $n$, hence 
$$\limsup_n \mathbb{P}^{b^+}(x_{\tau_n} \in A_{n, \epsilon}^{b+}) \geq p/4 > 0.$$
Now, by Theorem \ref{thm : spread-out measure trick}, the induced random walk with measure $\theta$ has bounded density and has finite $H_n$ for any $n$; hence, by Theorem \ref{thm : general An sets} the space $(B_+, \lambda_+)$ is the Poisson boundary of $(G, \theta)$. Finally, by Theorem 
\ref{thm : spread-out measure trick}, $(G, \mu)$ and $(G, \theta)$ have the 
same Poisson boundary, hence $(B_+, \lambda_+)$ is the Poisson boundary of $(G, \mu)$. 
\end{proof}

\bibliographystyle{abbrv}
\bibliography{biblography}

\end{document}